% SIAM Article Template
\documentclass{siamart1116}

\usepackage{mathtools}
\usepackage{graphicx}
\usepackage{amsfonts}
\usepackage{amsmath}
\usepackage{tikz}
\usepackage{pgf}
\usetikzlibrary{decorations.pathreplacing,matrix}
\usetikzlibrary{arrows.meta,bending,matrix,positioning}

\usepackage{amsmath}
\usepackage{amssymb}
\usepackage{epstopdf}
\usepackage{geometry} 
\usepackage{float}
\usepackage{algorithm}
\usepackage[noend]{algpseudocode}
\usepackage{color}
\usepackage{todonotes}
\usepackage{array}
\usepackage{listings}
\usepackage{hyperref}
\usepackage{bm}

\ifpdf
  \DeclareGraphicsExtensions{.eps,.pdf,.png,.jpg}
\else
  \DeclareGraphicsExtensions{.eps}
\fi

%\numberwithin{theorem}{section}

\newtheorem{remark}[theorem]{Remark}

% Declare title and authors, without \thanks
\newcommand{\TheTitle}{Numerical analysis of the maximal attainable accuracy in communication hiding pipelined Conjugate Gradient methods} 
%\newcommand{\TheTitle}{Analyzing the propagation of local rounding errors in the communication hiding pipelined CG algorithm with deep pipelines} 
%\newcommand{\TheTitle}{Numerical stability analysis of the communication hiding pipelined CG algorithm with deep pipelines} 
%that allows hiding of global communication behind multiple %overlapping %sparse matrix-vector products}
\newcommand{\TheAuthors}{Siegfried Cools}

% Sets running headers as well as PDF title and authors
%\headers{Local rounding error propagation in p($\ell$)-CG}{Cools}
\headers{Accuracy analysis for pipelined Conjugate Gradients}{S.~Cools}

% Title. If the supplement option is on, then "Supplementary Material"
% is automatically inserted before the title.
\title{{\TheTitle}\thanks{Submitted to the editors on \today.
\funding{This work was funded by Research Foundation Flanders (FWO).}}}

% Authors: full names plus addresses.
\author{
  Siegfried Cools\thanks{Applied Mathematics Group, Department of Mathematics and Computer Science, University of Antwerp, Building G, Middelheimlaan 1, 2020 Antwerp, Belgium.} 
}

\usepackage{amsopn}

% Optional PDF information
\ifpdf
\hypersetup{
  pdftitle={\TheTitle},
  pdfauthor={\TheAuthors}
}
\fi

\begin{document}

\maketitle

\begin{keywords}
  Pipelined Krylov subspace methods, Parallel performance, Exascale computations, Global communication, Latency hiding, Conjugate Gradients, Numerical stability, Inexact computations, Maximal attainable accuracy.
\end{keywords}

\vspace{-0.3cm}

\begin{AMS}
  65F10, 65N12, 65G50, 65Y05, 65N22.
\end{AMS}

\begin{abstract}
	Krylov subspace methods are widely known as efficient algebraic methods for solving large scale linear systems. However, on massively parallel hardware the performance of these methods is typically limited by communication latency rather than floating point performance. With HPC hardware advancing towards the exascale regime the gap between computation (i.e. flops) and communication (i.e. internode communication, as well as data movement within the memory hierarchy) keeps steadily increasing, imposing the need for scalable alternatives to traditional Krylov subspace methods. One such approach are the so-called pipelined Krylov subspace methods, which reduce the number of global synchronization points and overlap global communication latency with local arithmetic operations, thus `hiding' the global reduction phases behind useful computations. To obtain this overlap the traditional Krylov subspace algorithm is reformulated by introducing a number of auxiliary vector quantities, which are computed using additional recurrence relations. Although pipelined Krylov subspace methods are equivalent to traditional Krylov subspace methods in exact arithmetic, local rounding errors induced by the multi-term recurrence relations in finite precision may in practice affect convergence significantly. This numerical stability study aims to characterize the effect of local rounding errors on attainable accuracy in various pipelined versions of the popular Conjugate Gradient method. Expressions for the gaps between the true and recursively computed variables that are used to update the search directions in the different CG variants are derived. Furthermore, it is shown how these results can be used to analyze and correct the effect of local rounding error propagation on the maximal attainable accuracy of pipelined CG methods. The analysis in this work is supplemented by various numerical experiments that demonstrate the numerical behavior of the pipelined CG methods.
\end{abstract}

\section{Introduction} \label{sec:introduction}

Krylov subspace methods \cite{greenbaum1997iterative,liesen2012krylov,meurant1999computer,saad2003iterative,van2003iterative} have been used for decades as efficient iterative solution methods for linear systems. This paper considers the problem of solving algebraic linear systems of the form $Ax= b$, where $A$ is a real or complex non-singular square $n \times n$ matrix and the right-hand side vector $b$ correspondingly has length $n$. Given an initial guess $x_0$ for the solution $x$ and an initial residual $r_0 = b-Ax_0$, Krylov subspace methods construct a series of approximate solutions $x_i$ that lie in the $i$-th Krylov subspace
\[
	x_i \in x_0 + \mathcal{K}_i(A,r_0) = \text{span}\{r_0, Ar_0, \ldots , A^{i-1} r_0\}, \qquad i = 1,2,\ldots,
\]
using some orthogonality constraint that differentiates the various Krylov subspace methods. For problems with symmetric (or Hermitian) positive definite (SPD) matrices $A$ -- which are the main focus of this work -- one of the most basic yet widely used Krylov subspace methods is the (preconditioned) Conjugate Gradient (CG) method, that dates back to the original 1952 paper \cite{hestenes1952methods}, in which the orthogonality constraint boils down to minimizing the $A$-norm of the error over the Krylov subspace, i.e.,
\[
  \|x-x_i\|_A = \min_{y \in x_0 + \mathcal{K}_i(A,r_0)} \|x-y\|_A,
\] 
where the energy norm is defined as $\|x\|_A = (Ax,x)^{1/2} = (x,Ax)^{1/2}$ with the SPD matrix $A$.

A clear trend in current (petascale) and future (exascale) high performance computing hardware consists of up-scaling the number of parallel compute nodes as well as the number of processors per node \cite{dongarra2011international}. Compute nodes are generally interconnected using fast InfiniBand connections. However, the increasing gap between computational performance and memory/interconnect latency implies that on HPC hardware data movement is much more expensive than floating point operations (flops), both with respect to execution time as well as energy consumption \cite{dongarra2011international,fuller2011future}. As such, reducing time spent in moving data and/or waiting for data will be essential for exascale applications. 

Although Krylov subspace algorithms are traditionally highly efficient with respect to the number of flops that are required to obtain a sufficiently precise approximate solution $x_i$, they are not optimized towards minimizing communication. The HPCG benchmark \cite{dongarra2013toward,dongarra2015hpcg}, for example, shows that Krylov subspace methods are able to attain only a small fraction of the machine peak performance on large scale hardware. Since the matrix $A$ is often sparse and generally requires only limited communication between neighboring processors, the primary bottleneck for parallel execution is typically not the (sparse) matrix-vector (\textsc{spmv}) product. Instead, parallel efficiency stalls due to communication latency caused by global synchronization phases when computing the dot-products required for the orthogonalization procedures in the Krylov subspace algorithm.

A variety of interesting research branches on reducing or eliminating the synchronization bottleneck in Krylov subspace methods has emerged over the last decades. Based on the earliest ideas of communication reduction in Krylov subspace methods \cite{strakovs1987effectivity,chronopoulos1989s,d1992reducing,demmel1993parallel,erhel1995parallel,de1995reducing,chronopoulos1996parallel}, a number of methods that aim to eliminate global synchronization points has recently been (re)introduced. These include hierarchical Krylov subspace methods \cite{mcinnes2014hierarchical}, enlarged Krylov subspace methods \cite{grigori2016enlarged}, an iteration fusing Conjugate Gradient method \cite{zhuang2017iteration}, $s$-step Krylov subspace methods (also called ``communication avoiding'' Krylov subspace methods) \cite{chronopoulos2010block,carson2013avoiding,carson2014residual,imberti2017varying}, and pipelined Krylov subspace methods (also referred to as ``communication hiding'' Krylov subspace methods) \cite{ghysels2013hiding,ghysels2014hiding,sanan2016pipelined,eller2016scalable,yamazaki2017improving}. %A hybrid s-step pipelined method was presented in \cite{yamazaki2017improving}.
%One possible approach to reduce the impact of global communication in Krylov subspace methods is the class of pipelined Krylov subspace methods, which 
 The latter aim not only to reduce the number of global synchronization points in the algorithm, but also overlap global communication with useful (mostly local) computations such as \textsc{spmv}s ($y \leftarrow Ax$) and \textsc{axpy}s (%i.e.~vector additions and scalar multiplications of the form 
$y \leftarrow \alpha x + y$). In this way idle core time is reduced by simultaneously performing the synchronization phase and the independent compute-bound calculations. When the time required by one global communication phase approximately equals the computation time for the \textsc{spmv}, the pipelined CG method from \cite{ghysels2014hiding}, %which will be 
denoted as p-CG in this work, allows for a good overlap and thus %yields a 
features significantly reduced
%significant reduction of 
total time to solution compared to the classic CG method. In heavily communication-bound scenarios where a global reduction takes significantly longer than one \textsc{spmv}, deeper pipelines allow to overlap the global reduction phase with the computational work of multiple \textsc{spmv}s. The concept of deep pipelines was introduced by Ghysels et al.~\cite{ghysels2013hiding} for the Generalized Minimal Residual (GMRES) method, and was recently extended to the 
%Conjugate Gradient 
CG method \cite{cornelis2017communication}. 

The advantages of using pipelined (and other) communication reducing Krylov subspace methods from a performance point of view have been illustrated in many of the aforementioned works. However, reorganizing the traditional Krylov subspace algorithm into a communication reducing variant typically introduces unwanted issues with the numerical stability of the algorithm. In exact arithmetic pipelined Krylov subspace methods produce a series of iterates identical to the classic CG method. However, in finite precision arithmetic their behavior can differ significantly as local rounding errors may induce a decrease in attainable accuracy and a delay of convergence. The impact of finite precision round-off errors on numerical stability of classic CG has been extensively studied in a number of papers among which \cite{greenbaum1989behavior,greenbaum1992predicting,greenbaum1997estimating,gutknecht2000accuracy,strakovs2002error,strakovs2005error,meurant2006lanczos,gergelits2014composite}. 

In communication reducing CG variants the effects of local rounding errors are significantly amplified. We refer to our previous work \cite{cools2018analyzing} and the paper \cite{carson2016numerical} by Carson et al.~for a (historic) overview of the numerical stability analysis of pipelined Conjugate Gradient methods. In the present study we focus on analyzing the recently introduced deep pipelined Conjugate Gradient method \cite{cornelis2017communication}, denoted by p($\ell$)-CG.\footnote{Note for a proper understanding that the p($\ell$)-CG method in \cite{cornelis2017communication} was not derived from the p-CG method \cite{ghysels2014hiding}, but is rather based on similar principles as the p($\ell$)-GMRES method \cite{ghysels2013hiding}. Although both p-CG and p($\ell$)-CG are pipelined variants of the CG algorithm suffering from local rounding error propagation, the exact underlying mechanism by which error propagation occurs differs between these methods, as analyzed in Sections \ref{sec:pipecg}-\ref{sec:pipelcg}  of this work.} In \cite{cornelis2017communication} it was observed that the numerical accuracy attainable by the p($\ell$)-CG method may be reduced drastically for larger pipeline lengths $l$. Similar observations have been made for other classes of communication reducing methods, see e.g.~the studies \cite{chronopoulos1989s,carson2014residual} for the influence of the $s$-step parameter on the numerical stability of communication avoiding Krylov subspace methods. %For completeness we also note that apart from the effect of local rounding error on numerical stability, 
The performance of pipelined Krylov subspace methods with respect to HPC system noise has recently been analyzed in a stochastic framework, see \cite{morgan2016stochastic}.%, and the impact of hard faults and soft errors in (pipelined) CG was studied in \cite{agullo2016hard}.

The sensitivity of the p($\ell$)-CG Krylov subspace method to local rounding errors may induce an obstacle for the practical useability and reliability of this pipelined method. It is therefore vital to thoroughly understand and -- if possible -- counteract the negative effects of local rounding error propagation on the numerical stability of the method. 
The aim of this study is two-fold:
\vspace{0.1cm}
\begin{enumerate}
	\item to establish a theoretical framework for the behavior of local rounding errors that explains the observed loss of attainable accuracy in pipelined CG methods;
	\vspace{0.1cm}
	\item to exemplify possible -- preferably easy-to-implement and cost-efficient -- countermeasures which can be used to improve the attainable accuracy of these methods when required.
\end{enumerate}
\vspace{0.1cm}
This paper analyzes the behavior of local rounding errors that stem from the multi-term recurrence relations in the p($\ell$)-CG algorithm, and compares to %. The behavior of local rounding errors in p($\ell$)-CG will be compared to 
recent related work in \cite{carson2016numerical,cools2018analyzing} on %the numerical analysis of 
the length-one pipelined CG method \cite{ghysels2014hiding}. We explicitly characterize the propagation of local rounding errors throughout the algorithm and discuss the influence of the pipelined length $l$ and the choice of the Krylov basis on the maximal attainable accuracy of the method. Furthermore, based on the error analysis, a possible approach for stabilizing the p($\ell$)-CG method is suggested near the end of the manuscript. The analysis in this paper is limited to the effect of local rounding errors in the multi-term recurrence relations on attainable accuracy; a discussion of the loss of orthogonality \cite{greenbaum1992predicting,gutknecht2000accuracy} and consequential delay of convergence in pipelined CG variants is beyond the scope of this work.

The paper is structured as follows. Section \ref{sec:pipecg} briefly summarizes the existing numerical rounding error analysis for classic CG and pipelined CG (p-CG), while meanwhile introducing the notation that will be used throughout the manuscript. Section \ref{sec:pipelcg} is devoted to analyzing the behavior of local rounding errors that stem from the recurrence relations in p($\ell$)-CG. In Section \ref{sec:further} the numerical analysis is extended by establishing practical bounds for the operator that governs the error propagation. This allows to substantiate the impact of the pipeline length $l$, the iteration index $j$ and the choice of the Krylov basis on attainable accuracy. Section \ref{sec:countermeasures} presents a countermeasure to reduce the impact of local rounding errors on final attainable accuracy. This stabilization technique results directly from the error analysis; however, it comes at the cost of an increase in the algorithm's computational cost. In Section \ref{sec:numerical} we present some numerical experiments to verify and substantiate the numerical analysis in this work. Finally, the paper is concluded in Section \ref{sec:conclusions}.

\section{Analyzing the effect of local rounding errors in classic CG and p-CG} \label{sec:pipecg}

We first present an overview of the classic rounding error analysis of the standard CG method, Algorithm \ref{algo:CG}, which is treated in a broad variety of publications, see e.g.~\cite{greenbaum1992predicting,strakovs2002error,strakovs2005error,meurant2006lanczos}. In addition, we summarize the rounding error analysis of the length-one pipelined CG method (denoted as `p-CG'), Algorithm \ref{algo:PIPECG}, see Ghysels and Vanroose \cite{ghysels2014hiding}. This analysis was recently presented in \cite{cools2018analyzing,carson2016numerical}. %The latter algorithm is denoted by the short notation `p-CG' in this paper for simplicity.

\subsection{Behavior of local rounding errors in classic CG}

%\begin{algorithm}[t]
%{\small
  %\caption{CG (with preconditioning) \hfill \textbf{Input:} $A$, $M^{-1}$, $b$, $x_0$, $m$} \label{algo:CG}
  %\begin{algorithmic}[1]
    %\State $r_0 := b - Ax_0$; 
		%\State $u_0:= M^{-1} r_0$; 
		%\State $p_0 := u_0$; 
    %\For{$i = 0, \dots, m$}
    %\State $s_i := Ap_{i}$;
    %\State $\alpha_{i} := \left( r_i, u_i \right) / \left( s_i, p_i \right)$; 
    %\State $x_{i+1} := x_i + \alpha_{i} p_i$;
    %\State $r_{i+1} := r_i - \alpha_{i} s_i$;
    %\State $u_{i+1} := M^{-1} r_{i+1}$;
    %\State $\beta_{i+1} := \left( r_{i+1}, u_{i+1} \right) / \left( r_i, u_i \right)$;
    %\State $p_{i+1} := u_{i+1} + \beta_{i+1} p_i$;
    %\EndFor
		%\State \textbf{end for} 
  %\end{algorithmic}
%}
%\end{algorithm}

\begin{algorithm}[t]
{\small
  \caption{Conjugate Gradient method (CG) \hfill \textbf{Input:} $A$, $b$, $x_0$, $m$, $\tau$} \label{algo:CG}
  \begin{algorithmic}[1]
    \State $r_0 := b - Ax_0$;
		\State $p_0 := r_0$; 
    \For{$i = 0, \dots, m$}
    \State $s_i := Ap_{i}$;
    \State $\alpha_{i} := \left( r_i, r_i \right) / \left( s_i, p_i \right)$;
		\If{$\sqrt{\left( r_i, r_i \right)}/\|r_0\| < \tau$}
		\State RETURN;
		\EndIf
		\State \textbf{end if} 
    \State $x_{i+1} := x_i + \alpha_{i} p_i$;
    \State $r_{i+1} := r_i - \alpha_{i} s_i$;
    \State $\beta_{i+1} := \left( r_{i+1}, r_{i+1} \right) / \left( r_i, r_i \right)$;
    \State $p_{i+1} := r_{i+1} + \beta_{i+1} p_i$;
    \EndFor
		\State \textbf{end for} 
  \end{algorithmic}
}
\end{algorithm}

In the classic Conjugate Gradient method, Algorithm \ref{algo:CG}, the recurrence relations for the approximate solution $x_{j+1}$, the residual $r_{j+1}$, and the search direction $p_{j+1}$ are given in exact arithmetic by 
\begin{equation} \label{eq:recs_xandr_exact}
	x_{j+1} = x_j + \alpha_{j} p_j , \qquad 
	r_{j+1} = r_j - \alpha_{j} s_j , \qquad 
	p_{j+1} = r_{j+1} - \beta_{j+1} p_j ,
\end{equation}
respectively, where $s_j = A p_j$. In finite precision arithmetic the recurrence relations for these vectors do not hold exactly but are contaminated by local rounding errors in each iteration. %In this work 
We differentiate between exact variables (e.g.~the exact residual $r_j = b-Ax_j$) and the actually computed variables (e.g.~the finite precision residual $\bar{r}_j$) by using a notation with bars for the computed quantities. 

We use the classic model for floating point arithmetic with machine precision $\epsilon$. The round-off error on scalar multiplication, vector summation, \textsc{spmv} application and dot-product computation on an $n$-by-$n$ matrix $A$, length $n$ vectors $v$, $w$ and a scalar number $\alpha$ are respectively bounded by
\begin{align*}
	\| \alpha v - \text{fl}(\alpha v) \| &\leq \| \alpha v \| \, \epsilon =  |\alpha| \, \|v\| \, \epsilon, &\qquad
	\| v + w - \text{fl}(v + w) \| &\leq (\|v\| + \|w\|) \, \epsilon,\\
	\| Av - \text{fl}(Av) \| &\leq \mu\sqrt{n} \, \|A\| \, \|v\| \, \epsilon, &\qquad
	| \left( v,w \right) - \text{fl}(\,\left(v,w \right)\,) | &\leq n \, \|v\| \, \|w\| \epsilon,
\end{align*}
where $\text{fl}(\cdot)$ indicates the finite precision floating point representation, $\mu$ is the maximum number of nonzeros in any row of $A$, and the norm $\|\cdot\|$ represents the Euclidean 2-norm in this manuscript. %unless explicitly stated otherwise. 

Considering the recurrence relations for the approximate solution $\bar{x}_{j+1}$, the residual $\bar{r}_{j+1}$ and the search direction $\bar{p}_{j+1}$ computed by the CG algorithm in the finite precision framework yields
\begin{equation} \label{eq:recs_xandr}
	\bar{x}_{j+1} = \bar{x}_j + \bar{\alpha}_j \bar{p}_j + \xi_{j+1}^{\bar{x}} , \qquad 
	\bar{r}_{j+1} = \bar{r}_j - \bar{\alpha}_j \bar{s}_j + \xi_{j+1}^{\bar{r}} , \qquad
	\bar{p}_{j+1} = \bar{r}_{j+1} - \bar{\beta}_{j+1} \bar{p}_j + \xi_{j+1}^{\bar{p}} ,
\end{equation}
where $\xi_{j+1}^{\bar{x}}$, $\xi_{j+1}^{\bar{r}}$ and $\xi_{j+1}^{\bar{p}}$ represent local rounding errors, and where $\bar{s}_j = A \bar{p}_j$. We refer to the analysis in \cite{carson2016numerical,cools2018analyzing} for bounds on the norms of these local rounding errors.

It is well-known that local rounding errors in the classic CG algorithm are accumulated but not amplified throughout the algorithm. This is concluded directly from computing the gap on the residual $f_j = (b-A\bar{x}_j) - \bar{r}_j$. With this notation it follows from \eqref{eq:recs_xandr} that
\begin{equation} \label{eq:f_CG}
  f_{j+1} =  f_j - A\xi_{j+1}^{\bar{x}} - \xi_{j+1}^{\bar{r}} = f_0 - \sum_{k=0}^{j} ( A  \xi_{k+1}^{\bar{x}} + \xi_{k+1}^{\bar{r}} ).
\end{equation}
Hence in each iteration local rounding errors of the form $A\xi_{j+1}^{\bar{x}} + \xi_{j+1}^{\bar{r}}$ add to the gap on the residual.

By introducing the matrix notation $\mathcal{F}_{j+1} = [f_0, \ldots, f_j]$ for the residual gaps in the first $j+1$ iterations and by analogously defining $\Theta_j^{\bar{x}} = [0,-\xi_1^{\bar{x}},\ldots, -\xi_{j-1}^{\bar{x}}]$ and $\Theta_j^{\bar{r}} = [f_0,-\xi_1^{\bar{r}},\ldots, -\xi_{j-1}^{\bar{r}}]$ for the local rounding errors, expression \eqref{eq:f_CG} can be formulated as
\begin{equation} \label{eq:f_CG_mat}
  \mathcal{F}_{j+1} = (A \Theta_{j+1}^{\bar{x}} + \Theta_{j+1}^{\bar{r}}) \, U_{j+1},
\end{equation}
where $U_{j+1}$ is a $(j+1) \times (j+1)$ upper triangular matrix of ones. %, i.e.:
%\begin{equation}\label{eq:matrixU}
%U_{j+1} =
%\left(\begin{array}{cccc} 
%1&1&\cdots&1 \\
%&1&\cdots&1 \\
%&&\ddots&\vdots \\
%&&&1
%\end{array}\right).
%\end{equation}
%Since all entries of $U_{j+1}$ are one, 
No amplification of local rounding errors occurs; indeed, local rounding errors are merely accumulated in the classic CG algorithm.

\subsection{Propagation of local rounding errors in p-CG} \label{sec:rounding_pcg}

%\begin{algorithm}[t]
%{\small
  %\caption{Pipelined p-CG (with preconditioning) \hfill \textbf{Input:} $A$, $M^{-1}$, $b$, $x_0$, $m$}  \label{algo:PIPECG}
  %\begin{algorithmic}[1]
    %\State $r_0 := b - Ax_0$; 
		%\State $u_0:= M^{-1} r_0$; 
		%\State $w_0 := Au_0$;
    %\For{$i = 0,\dots,m$}
    %\State $\gamma_i :=(r_i,u_i)$;
    %\State $\delta := (w_i,u_i)$;
    %\State $m_i := M^{-1} w_i$;
    %\State $v_i := A m_i$;
    %\If{$i>0$}
    %\State $\beta_i := \gamma_i/\gamma_{i-1}$; $\alpha_i := (\delta/\gamma_i - \beta_i/\alpha_{i-1})^{-1}$;
    %\Else
    %\State $\beta_i := 0$; $\alpha_i := \gamma_i/\delta$;
    %\EndIf
		%\State \textbf{end if} 
    %\State $z_i := v_i + \beta_i z_{i-1}$;
    %\State $q_i := m_i + \beta_i q_{i-1}$;
    %\State $s_i := w_i + \beta_i s_{i-1}$;
    %\State $p_i := u_i + \beta_i p_{i-1}$;
    %\State $x_{i+1} := x_i + \alpha_i p_i$;
    %\State $r_{i+1} := r_i - \alpha_i s_i$;
    %\State $u_{i+1} := u_i - \alpha_i q_i$;
    %\State $w_{i+1} := w_i - \alpha_i z_i$;
    %\EndFor
		%\State \textbf{end for} 
  %\end{algorithmic}
%}
%\end{algorithm}

\begin{algorithm}[t]
{\small
  \caption{Pipelined Conjugate Gradient method (p-CG) \hfill \textbf{Input:} $A$, $b$, $x_0$, $m$, $\tau$}  \label{algo:PIPECG}
  \begin{algorithmic}[1]
    \State $r_0 := b - Ax_0$;
		\State $w_0 := Ar_0$;
    \For{$i = 0,\dots,m$}
    \State $\gamma_i := (r_i,r_i)$;
    \State $\delta_i := (w_i,r_i)$;
    \State $v_i := A w_i$;
		%\State \textbf{if} $\sqrt{\gamma_i}/\|b-Ax_0\| < \tau$ \textbf{then} RETURN; \textbf{end if}
		\If{$\sqrt{\gamma_i}/\|r_0\| < \tau$}
		\State RETURN;
		\EndIf
		\State \textbf{end if} 
    \If{$i>0$}
    \State $\beta_i := \gamma_i/\gamma_{i-1}$; 
		\State $\alpha_i := (\delta_i/\gamma_i - \beta_i/\alpha_{i-1})^{-1}$;
    \Else
    \State $\beta_i := 0$; 
		\State $\alpha_i := \gamma_i/\delta_i$;
    \EndIf
		\State \textbf{end if} 
    \State $z_i := v_i + \beta_i z_{i-1}$;
    \State $s_i := w_i + \beta_i s_{i-1}$;
    \State $p_i := r_i + \beta_i p_{i-1}$;
    \State $x_{i+1} := x_i + \alpha_i p_i$;
    \State $r_{i+1} := r_i - \alpha_i s_i$;
    \State $w_{i+1} := w_i - \alpha_i z_i$;
    \EndFor
		\State \textbf{end for} 
  \end{algorithmic}
}
\end{algorithm}

The behavior of local rounding errors in the length-one p-CG method from \cite{ghysels2014hiding}, see Algorithm \ref{algo:PIPECG}, was analyzed in our previous work \cite{cools2018analyzing} and in the related work \cite{carson2016numerical}. %Apart from the recursive formulations for the residual $r_j = b - A x_j$ and the solution $x_j$, 
Pipelined p-CG uses additional recurrence relations for the auxiliary vector quantities $w_j := A r_j$, $s_j := A p_j$ and $z_j := A s_j$. The coupling between these recursively defined variables may cause amplification of the local rounding errors in the algorithm. In finite precision one has the following recurrence relations in p-CG (without preconditioner)\footnote{Preconditioning of the various Conjugate Gradients variants has been largely omitted in this section for simplicity of notation. The extension of the local rounding error analysis to the preconditioned p-CG (and, by extension, p($\ell$)-CG) algorithm is trivial, since the recurrences for the unpreconditioned variables are decoupled from their preconditioned counterparts. We refer the reader to Section \ref{sec:preconditioned} and our own related work in \cite{cools2018analyzing} for more details.}:
\begin{align}
	\bar{x}_{j+1} &= \bar{x}_j + \bar{\alpha}_j \bar{p}_j + \xi_{j+1}^{\bar{x}}, &
	%\bar{p}_i 		&= \bar{u}_i + \bar{\beta}_i \bar{p}_{i-1} + \xi_i^{\bar{p}} , \notag \\
	\bar{w}_{j+1} &= \bar{w}_j - \bar{\alpha}_j \bar{z}_j + \xi_{j+1}^{\bar{w}}, &
	\bar{r}_{j+1} &= \bar{r}_j - \bar{\alpha}_j \bar{s}_j + \xi_{j+1}^{\bar{r}}, \notag \\
	\bar{s}_j 		&= \bar{w}_j + \bar{\beta}_j \bar{s}_{j-1} + \xi_j^{\bar{s}}, &
	\bar{p}_j 		&= \bar{r}_j + \bar{\beta}_j \bar{p}_{j-1} + \xi_j^{\bar{p}} , &
	%\bar{z}_i     &= A\bar{m}_i + \bar{\beta}_i \bar{z}_{i-1} + \xi_i^{\bar{z}}. \label{eq:pxr_pipecg}
	\bar{z}_j     &= A\bar{w}_j + \bar{\beta}_j \bar{z}_{j-1} + \xi_j^{\bar{z}}. \label{eq:pxr_pipecg}
\end{align}
The respective bounds for the local rounding errors $\xi_{k}^{\bar{x}},\xi_{k}^{\bar{r}},\xi_{k}^{\bar{p}},\xi_{k}^{\bar{s}},\xi_{k}^{\bar{w}}$ and $\xi_{k}^{\bar{z}}$ (based on the recurrence relations above) can be found in \cite{cools2018analyzing}, Section 2.3, where it is furthermore shown that the residual gap $f_j = (b-A\bar{x}_j) - \bar{r}_j$ is coupled to the gaps $g_j = A\bar{p}_j - \bar{s}_j$, $h_j = A\bar{r}_j - \bar{w}_j$ and $e_j = A\bar{s}_j - \bar{z}_j$ on the auxiliary variables in p-CG.
%The respective bounds for the local rounding errors $\xi_{k}^{\bar{x}},\xi_{k}^{\bar{r}},\xi_{k}^{\bar{p}},\xi_{k}^{\bar{s}},\xi_{k}^{\bar{w}}$ and $\xi_{k}^{\bar{z}}$ (based on the recurrence relations above) can be found in \cite{cools2018analyzing}, where it is furthermore shown that the following relations hold for the residual gap $f_j = (b-A\bar{x}_j) - \bar{r}_j$ and the gaps $g_j = A\bar{p}_j - \bar{s}_j$, $h_j = A\bar{r}_j - \bar{w}_j$ and $e_j = A\bar{s}_j - \bar{z}_j$ on the auxiliary variables in p-CG:
%\begin{align} \label{eq:gap_f0_pcg}
	%f_{j} &= f_0 - \sum_{k=0}^{j-1} \bar{\alpha}_k g_k - \sum_{k=0}^{j-1} \left(A\xi_{k+1}^{\bar{x}} + \xi_{k+1}^{\bar{r}}\right), \notag \\
	%g_j &= \left(\prod_{k=1}^j\bar{\beta}_k\right) g_0 + \sum_{k=1}^j \left(\prod_{l=k+1}^j \bar{\beta}_l\right) \left(A\xi_{k}^{\bar{p}}-\xi_{k}^{\bar{s}}\right) + \sum_{k=1}^j \left(\prod_{l=k+1}^j \bar{\beta}_l\right) h_k, \notag \\
  %h_j &= h_0 - \sum_{k=0}^{j-1} \bar{\alpha}_k e_k + \sum_{k=0}^{j-1} \left(A \xi_{k+1}^{\bar{r}} - \xi_{k+1}^{\bar{w}}\right), \notag \\
  %e_j &= \left( \prod_{k=1}^j \bar{\beta}_k\right)	e_0 + \sum_{k=1}^j \left(\prod_{l=k+1}^j \bar{\beta}_l\right) \left(A\xi_{k}^{\bar{s}} - \xi_{k}^{\bar{z}}\right). 
%\end{align}
We present the relations from \cite{cools2018analyzing} in matrix form. Let $B = [b,b,\ldots,b]$, $\bar{X}_{j+1} = [\bar{x}_0,\bar{x}_1,\ldots, \bar{x}_j]$ and $\bar{P}_{j+1} = [\bar{p}_0,\bar{p}_1,\ldots, \bar{p}_j]$. Writing %the expressions for 
the gaps %in \eqref{eq:gap_f0_pcg} 
%in matrix notation 
as
\begin{align}
  \mathcal{F}_{j+1} &= R_{j+1}-\bar{R}_{j+1}, & R_{j+1} &= B-A\bar{X}_{j+1}, & \bar{R}_{j+1} &= [\bar{r}_0,\bar{r}_1,\ldots, \bar{r}_j], \notag \\
  \mathcal{G}_{j+1} &= S_{j+1}-\bar{S}_{j+1}, & S_{j+1} &= A\bar{P}_{j+1}, & \bar{S}_{j+1} &= [\bar{s}_0,\bar{s}_1,\ldots, \bar{s}_j], &  \notag \\
  \mathcal{H}_{j+1} &= W_{j+1}-\bar{W}_{j+1}, & W_{j+1} &= A\bar{R}_{j+1}, &  \bar{W}_{j+1} &= [\bar{w}_0,\bar{w}_1,\ldots, \bar{w}_j], &  \notag \\
  \mathcal{E}_{j+1} &= Z_{j+1}-\bar{Z}_{j+1}, & Z_{j+1} &= A\bar{S}_{j+1}, &  \bar{Z}_{j+1} &= [\bar{z}_0,\bar{z}_1,\ldots, \bar{z}_j], &  \notag
\end{align}
and using the following expressions for the local rounding errors on the auxiliary variables % in p-CG:
\begin{align}
 \Theta_j^{\bar{x}} &= -[0,\xi_1^{\bar{x}},...\,, \xi_{j-1}^{\bar{x}}], \hspace{-0.2cm} & \Theta_j^{\bar{r}} &= [f_0,-\xi_1^{\bar{r}},...\,, -\xi_{j-1}^{\bar{r}}], \hspace{-0.2cm} &
 \Theta_j^{\bar{p}} &= [0,\xi_1^{\bar{p}},...\,, \xi_{j-1}^{\bar{p}}],  \hspace{-0.2cm} & \Theta_j^{\bar{s}} &= [g_0,-\xi_1^{\bar{s}},...\,, -\xi_{j-1}^{\bar{s}}], \notag \\
 \Theta_j^{\bar{u}} &= [0,\xi_1^{\bar{r}},...\,, \xi_{j-1}^{\bar{r}}],  \hspace{-0.2cm} & \Theta_j^{\bar{w}} &= [h_0,-\xi_1^{\bar{w}},...\,, -\xi_{j-1}^{\bar{w}}], \hspace{-0.2cm}&
 \Theta_j^{\bar{q}} &= [0,\xi_1^{\bar{s}},...\,, \xi_{j-1}^{\bar{s}}],  \hspace{-0.2cm} & \Theta_j^{\bar{z}} &= [e_0,-\xi_1^{\bar{z}},...\,, -\xi_{j-1}^{\bar{z}}], \notag 
\end{align}
we obtain matrix expressions for the local rounding errors in p-CG: %from \cite{cools2018analyzing}, Section 2.3: 
\begin{align*} 
  \mathcal{F}_{j+1} &= (A\Theta_{j+1}^{\bar{x}}+\Theta_{j+1}^{\bar{r}}) \, U_{j+1} + \mathcal{G}_{j+1} \mathcal{A}_{j+1}, &
  \mathcal{G}_{j+1} &= (A\Theta_{j+1}^{\bar{p}}+\Theta_{j+1}^{\bar{s}}) \, \mathcal{B}^{-1}_{j+1} + \mathcal{H}_{j+1} \widetilde{\mathcal{B}^{-1}_{j+1}}, \\ 
  \mathcal{H}_{j+1} &= (A\Theta_{j+1}^{\bar{u}}+\Theta_{j+1}^{\bar{w}}) \, U_{j+1} + \mathcal{E}_{j+1} \mathcal{A}_{j+1}, &
  \mathcal{E}_{j+1} &= (A\Theta_{j+1}^{\bar{q}}+\Theta_{j+1}^{\bar{z}}) \, \mathcal{B}^{-1}_{j+1},
\end{align*}
where \vspace{-0.2cm}
\begin{equation*}
  \mathcal{A}_{j+1} = -
  \left(\begin{array}{ccccc} 
    0&\bar{\alpha}_0&\bar{\alpha}_0&\cdots&\bar{\alpha}_0 \\
    &0&\bar{\alpha}_1&\cdots&\bar{\alpha}_1 \\
    &&\ddots&& \vdots\\
    &&&0& \bar{\alpha}_{j-1}\\
    &&&&0
  \end{array}\right),
\qquad
  \mathcal{B}_{j+1} = 
  \left(\begin{array}{ccccc} 
    1&-\bar{\beta}_1&0&\cdots&0 \\
    &1&-\bar{\beta}_2& \ddots &\vdots\\
    &&\ddots&\ddots&0 \\
    &&&\ddots&-\bar{\beta}_j \\
    &&&&1
  \end{array}\right).
\end{equation*}
The inverse of the upper bidiagonal matrix $\mathcal{B}_{j+1}$ can be expressed in terms of the products of the coefficients $\bar{\beta}_k$ (with $1 \leq k \leq j$), i.e.
\begin{equation*}
\mathcal{B}_{j+1}^{-1} = 
\left(\begin{array}{ccccc} 
1&\bar{\beta}_1&\bar{\beta}_1 \bar{\beta}_2&\cdots&\bar{\beta}_1 \bar{\beta}_2 \ldots \bar{\beta}_j \\
&1&\bar{\beta}_2& \ddots &\bar{\beta}_2 \ldots \bar{\beta}_j\\
&&\ddots&\ddots&\vdots \\
&&&\ddots&\bar{\beta}_j \\
&&&&1
\end{array}\right).
\end{equation*}
Furthermore, the matrix $\widetilde{\mathcal{B}_{j+1}^{-1}}$ is defined by setting the first row of $\mathcal{B}_{j+1}^{-1}$ to zero. For a correct interpretation we note that this matrix is not invertible; the notation $\widetilde{\mathcal{B}_{j+1}^{-1}}$ merely indicates the close relation %of this operator 
to $\mathcal{B}_{j+1}^{-1}$. 
The residual gap in p-CG is summarized by the following expression:
\begin{align}\label{eq:total_gap_pcg}
  \mathcal{F}_{j+1} &= (A\Theta_{j+1}^{\bar{x}}+\Theta_{j+1}^{\bar{r}}) \, U_{j+1} + (A\Theta_{j+1}^{\bar{p}}+\Theta_{j+1}^{\bar{s}}) \, \mathcal{B}^{-1}_{j+1} \mathcal{A}_{j+1} + \ldots \notag \\
	& \quad + (A\Theta_{j+1}^{\bar{u}}+\Theta_{j+1}^{\bar{w}}) \, U_{j+1} \widetilde{\mathcal{B}^{-1}_{j+1}} \mathcal{A}_{j+1} + (A\Theta_{j+1}^{\bar{q}}+\Theta_{j+1}^{\bar{z}}) \, \mathcal{B}^{-1}_{j+1} \mathcal{A}_{j+1} \widetilde{\mathcal{B}^{-1}_{j+1}} \mathcal{A}_{j+1}, 
\end{align}
Hence, the entries of the coefficient matrices $\mathcal{B}^{-1}_{j+1}$ and $\mathcal{A}_{j+1}$ determine the propagation of the local rounding errors in p-CG.
The entries of $\mathcal{B}^{-1}_{j+1}$ consist of a product of the scalar coefficients $\bar{\beta_{j}}$. In exact arithmetic these coefficients equal $\beta_j = \|r_j\|^2/\|r_{j-1}\|^2$, such that
\begin{equation}
	\beta_i \, \beta_{i+1} \, \ldots \, \beta_{j} = \frac{\|r_i\|^2}{\|r_{i-1}\|^2} \frac{\|r_{i+1}\|^2}{\|r_{i}\|^2} \ldots \frac{\|r_j\|^2}{\|r_{j-1}\|^2} = \frac{\|r_j\|^2}{\|r_{i-1}\|^2}, \quad i \leq j.
\end{equation}
Since the residual norm in CG is not guaranteed to decrease monotonically, the factor $\|r_j\|^2/\|r_{i-1}\|^2$ may for some $i \leq j$ be much larger than one. A similar argument may be used in the finite precision framework to derive that some entries of $\mathcal{B}^{-1}_{j+1}$ may be significantly larger than one, and may hence (possibly dramatically) amplify the corresponding local rounding errors in expression \eqref{eq:total_gap_pcg}. This behavior is illustrated in Section \ref{sec:numerical}, Fig.~\ref{fig:figure2} (top right), where the p-CG residual stagnates at a reduced maximal attainable accuracy level compared to classic CG, see Fig.~\ref{fig:figure2} (top left).

\section{Analyzing local rounding error propagation in p($\ell$)-CG} \label{sec:pipelcg}

This section aims to analyze the behavior of local rounding errors that stem from the recurrence relations in the pipelined p($\ell$)-CG algorithm with deep pipelines, Algorithm \ref{algo:PIPELCG}. The analysis will follow a framework similar to Section \ref{sec:pipecg}; however, the p($\ell$)-CG method is in fact much more closely related to p($\ell$)-GMRES \cite{ghysels2013hiding} than it is to p-CG \cite{ghysels2014hiding}. For a proper understanding we first resume the essential definitions and recurrence relations used in the p($\ell$)-CG algorithm.

\subsection{Basis vector recurrence relations in p($\ell$)-CG in exact arithmetic}

\begin{algorithm}[t]
{\small
\caption{Deep pipelined CG (p($\ell$)-CG) \hfill \textbf{Input:} $A$, $b$, $x_0$, $l$, $m$, $\sigma_0,\ldots,\sigma_{l-1}$, $\tau$}\label{algo:PIPELCG}
\begin{algorithmic}[1]
\State $r_{0}:=b-Ax_{0};$ $v_{0}:= r_{0}/{\|r_{0}\|}_2;$ $z_{0}:=v_{0}; ~ g_{0,0}:=1;$
\For {$i=0,\ldots, m+l$}
\State $ z_{i+1}:=\left\{ \begin{matrix}(A-\sigma_{i}I)z_{i}, & i<l \\ Az_{i}, & i \geq l \end{matrix}\right.$
\State $a:=i-l$
\If {$a\geq 0$}
\State $g_{j,a+1} := (g_{j,a+1}-\sum_{k=a+1-2l}^{j-1}g_{k,j}g_{k,a+1})/g_{j,j}; \qquad j=a-l+2,\ldots,a$  
\State $g_{a+1,a+1}:= \sqrt{g_{a+1,a+1}-\sum_{k=a+1-2l}^{a}g_{k,a+1}^2};$
\State \# Check for breakdown and restart if required
\If {$a<l$}
\State $\gamma_{a}:=(g_{a,a+1}+\sigma_{a}g_{a,a}-\delta_{a-1}g_{a-1,a})/g_{a,a};$
\State $\delta_{a}:=g_{a+1,a+1}/g_{a,a};$
\Else
\State $\gamma_{a}:=(g_{a,a}\gamma_{a-l}+g_{a,a+1}\delta_{a-l}-\delta_{a-1}g_{a-1,a})/g_{a,a};$
\State $\delta_{a}:=(g_{a+1,a+1}\delta_{a-l})/g_{a,a};$
\EndIf
\State \textbf{end if}
\State $v_{a+1} := (z_{a+1} - \sum_{j=a-2l+1}^{a} g_{j,a+1} v_{j})/g_{a+1,a+1};$
\State $z_{i+1} := (z_{i+1} - \gamma_{a} z_{i}- \delta_{a-1} z_{i-1})/\delta_{a};$
\EndIf
\State \textbf{end if}
\If {$a<0$}
\State $g_{j,i+1}:=(z_{i+1},z_{j}); \qquad j=0,\ldots,i+1$
\Else 
\State $g_{j,i+1}:=\left\{ \begin{matrix}(z_{i+1},v_{j}); & j=\max(0,i-2l+1),\ldots,a+1 \\ (z_{i+1},z_{j});  &j=a+2,\ldots,i+1 \end{matrix}\right.$
\EndIf
\State \textbf{end if}
\If {$a=0$}
\State $\eta_{0}:=\gamma_{0};$
\State $\zeta_{0}:={\|r_{0}\|}_2;$
\State $p_{0}:=v_0/\eta_0;$
\Else \, \textbf{if} {$a\geq 1$} \textbf{then}
\State $\lambda_{a}:=\delta_{a-1}/\eta_{a-1};$ 
\State $\eta_{a}:=\gamma_{a}-\lambda_{a}\delta_{a-1};$
\State $\zeta_{a}=-\lambda_{a}\zeta_{a-1};$
\State $p_{a}=(v_{a}-\delta_{a-1}p_{a-1})/\eta_{a};$
\State $x_{a}=x_{a-1}+\zeta_{a-1}p_{a-1};$
%\State \textbf{if} $|\zeta_a|/\|b-Ax_0\| < \tau$ \textbf{then} RETURN; \textbf{end if}
\If {$|\zeta_a|/\|r_0\| < \tau$}
\State RETURN;
\EndIf
\State \textbf{end if} 
\EndIf
\State \textbf{end if}
\EndFor 
\State \textbf{end for}
\end{algorithmic}
}
\end{algorithm}

Let $V_{i-l+1}:=[v_{0},v_{1},\ldots,v_{i-l}]$ be the orthonormal basis for the Krylov subspace $\mathcal{K}_{i-l+1}(A,v_{0})$ in iteration $i$ of the p($\ell$)-CG algorithm, where $A$ is a symmetric matrix. These vectors satisfy the Arnoldi relation
\begin{equation} 
AV_{j} = V_{j+1} H_{j+1,j}, \qquad 1 \leq j \leq i-l, 
\end{equation}
where $H_{j+1,j}$ is the $(j+1) \times j$ tridiagonal matrix
\begin{equation*} H_{j+1,j} = 
  \begin{pmatrix} 
  \gamma_{0} & \delta_{0} & & & \\ 
 \delta_{0} & \gamma_{1} & \delta_{1} & & \\ 
  & \delta_{1} & \gamma_{2} & \ddots & \\ 
  & & \ddots & \ddots & \delta_{j-2} \\ 
  & & & \delta_{j-2} & \gamma_{j-1}  \\
  & & & & \delta_{j-1}
\end{pmatrix}.
\end{equation*}
For  $1 \leq j \leq i-l$ the relation $AV_{j} = V_{j+1} H_{j+1,j}$ translates in vector notation to the recursive definition of $v_{j+1}$:
\begin{equation}\label{eq:v_ex}
  v_{j+1}= ( Av_{j} - \gamma_j v_j - \delta_{j-1} v_{j-1} ) / \delta_j, \qquad 0 \leq j < i - l.
\end{equation}
Note that for $j = 0$ it is assumed that $v_{-1} = 0$. We define the auxiliary vector basis $Z_{i+1}:=[z_{0},z_{1},\ldots,z_{i-l},z_{i-l+1},\ldots,z_{i}]$, which runs $l$ vectors ahead of the basis $V_{i-l+1}$ (i.e.~the so-called \emph{pipeline} of length $l$) as 
\begin{equation} \label{eq:z_ex}
  z_{j}:= \left\{ \begin{matrix} v_{0}, & j=0, \\ P_{j}(A)v_{0}, & 0<j\leq l, \\ P_{l}(A)v_{j-l}, & j>l, \end{matrix} \right. \qquad \text{with} \qquad P_{i}(t) := \prod_{j=0}^{i-1} (t-\sigma_{j}), \qquad i\leq l,
\end{equation}
%where the polynomials $P_{i}(t)$ are defined as
%\begin{equation*}
  %P_{i}(t) := \prod_{j=0}^{i-1} (t-\sigma_{j}), \qquad i\leq l,
%\end{equation*}
with optional stabilizing shifts $\sigma_{j}\in \mathbb{C}$, see \cite{ghysels2013hiding,cornelis2017communication}. 
We comment on the choice of the Krylov basis and its effect on numerical stability further on in this manuscript.
Note that contrary to the basis $V_{i-l+1}$, the auxiliary basis $Z_{i+1}$ is in general not orthonormal.
For $j < l$ one has the relation:
\begin{equation} \label{eq:z_rec0}
z_{j+1} = (A-\sigma_{j}I) \, z_{j}, \qquad 0 \leq j < l,
\end{equation}
whereas for $j \geq l$ the relation \eqref{eq:v_ex} for $v_{j-l+1}$ is multiplied on both sides by $P_{l}(A)$ to obtain the recurrence relation for $z_{j+1}$:
\begin{equation} \label{eq:z_rec}
  z_{j+1} = ( Az_{j} - \gamma_{j-l} z_j - \delta_{j-l-1} z_{j-1} ) / \delta_{j-l}, \qquad l \leq j < i. 
\end{equation}
In matrix formulation the expressions \eqref{eq:z_rec0}-\eqref{eq:z_rec} for $z_{j+1}$ translate into the Arnoldi-like relation
\begin{equation}
A Z_j = Z_{j+1} B_{j+1,j}, \qquad 1 \leq j \leq i,
\end{equation}
 where the matrix $B_{j+1,j}$ is
\begin{equation*}
B_{j+1,j}
= 
%\begin{pmatrix} 
%\sigma_{0} & & & & & &\\
%1 & \ddots & & & & & \\
%& \ddots & \sigma_{l-1} & & & & \\
%& & 1 & \gamma_0 & \delta_1 & &  \\
%& & & \delta_0 & \gamma_1 & \ddots &  \\
%& & & & \ddots & \ddots & \delta_{j-l-2} \\
%& & & & & \delta_{j-l-2} & \gamma_{j-l-1} \\ 
%& & & & & & \delta_{j-l-1}
%\end{pmatrix} 
%= 
\left(\begin{array}{ccc|ccc} 
\sigma_{0} & & & & & \\
1 & \ddots & & & & \\
& \ddots & \sigma_{l-1} & & & \\ \hline
& & 1 & & & \\
%& & & & & \\
& & & & H_{j-l+1,j-l} & \\
%& & & & & \\
& & & & & 
\end{array}\right).
\end{equation*}

The basis vectors $V_{j}$ and $Z_j$ are connected through the basis transformation $Z_{j}=V_{j}G_{j}$ for $1 \leq j \leq i-l+1$. The upper triangular matrix $G_{j}$ has a band structure with a band width of at most $2l+1$ non-zero diagonals, see \cite{cornelis2017communication}, Lemma 6. Using this basis transformation the following recurrence relation for $v_{j+1}$ is derived:
\begin{equation} \label{eq:v_rec}
  v_{j+1} = \left( z_{j+1} - \sum_{k=j-2l+1}^{j} g_{k,j+1} v_{k} \right) / g_{j+1,j+1}, \qquad 0 \leq j < i-l.
\end{equation}
The recurrence relations \eqref{eq:z_rec} and \eqref{eq:v_rec} are used in Alg.~\ref{algo:PIPELCG} to recursively compute the respective basis vectors $v_{i-l+1}$ and $z_{i+1}$ in iterations $i \geq l$ (i.e. once the initial pipeline for $z_0,\ldots,z_l$ has been filled).

\begin{remark} \label{remark:residualnorm}
  Note that although the residual $r_j = b-Ax_j$ is not recursively computed in Algorithm \ref{algo:PIPELCG}, it is proven in \cite{cornelis2017communication} that the residual norm can be characterized by the quantity $\zeta_j$, i.e.
	\begin{equation} \label{eq:resnorm}
		\|r_j\| = |\zeta_j|, \qquad 0 \leq j \leq i-l.
	\end{equation}
\end{remark}

\begin{remark} \label{remark:sqrt_breakdown}
Unlike the p-CG Algorithm \ref{algo:PIPECG}, the p($\ell$)-CG Algorithm \ref{algo:PIPELCG} may encounter a square root breakdown during the iteration. When the root argument $g_{a+1,a+1}-\sum_{k=a+1-2l}^{a}g_{k,a+1}^2$ becomes negative (possibly influenced by round-off errors in practice) a breakdown occurs. It is suggested in \cite{cornelis2017communication} that when breakdown occurs the iteration is restarted in analogy to the (pipelined) GMRES algorithm. Note however that this restarting strategy may delay convergence, cf.~Section \ref{sec:numerical}, Fig.~\ref{fig:figure1}.
\end{remark}

\subsection{Local rounding error behavior in finite precision p($\ell$)-CG} \label{sec:local}

For any $j \geq 0$ the true basis vector, denoted by $\bar{\bold{v}}_{j+1}$, satisfies the Arnoldi relation \eqref{eq:v_ex} exactly, that is, it is defined as
\begin{equation} \label{eq:v_arnoldi}
  \bar{\bold{v}}_{j+1}= ( A\bar{v}_{j} - \bar{\gamma}_j \bar{v}_j - \bar{\delta}_{j-1} \bar{v}_{j-1} ) / \bar{\delta}_j, \qquad 0 \leq j < i-l.
\end{equation}
For $j = 0$ it is assumed that $\bar{v}_{-1} = 0.$
On the other hand, the computed basis vector $\bar{v}_{j+1}$ is calculated from the finite precision variant of the recurrence relation \eqref{eq:v_rec}, i.e.
\begin{equation} \label{eq:vbar_rec}
  \bar{v}_{j+1} = \left( \bar{z}_{j+1} - \sum_{k=j-2l+1}^{j} \bar{g}_{k,j+1} \bar{v}_{k} \right) / \bar{g}_{j+1,j+1} + \xi^{\bar{v}}_{j+1}, \qquad 0 \leq j < i-l,
\end{equation}
where the size of the local rounding errors $\xi^{\bar{v}}_{j+1}$ can be bounded in terms of the machine precision $\epsilon$ as follows:
\begin{equation*} 
  \| \xi^{\bar{v}}_{j+1} \| \leq \left( 2 \, \frac{1}{|\bar{g}_{j+1,j+1}|} \|\bar{z}_{j+1}\| + 3 \sum_{k=j-2l+1}^{j} \frac{|\bar{g}_{k,j+1}|}{|\bar{g}_{j+1,j+1}|} \, \|\bar{v}_k\| \right) \epsilon.
\end{equation*}

By subtracting the computed basis vector $\bar{v}_{j+1}$ from both sides of the equation \eqref{eq:v_arnoldi}, it is easy to see that this relation alternatively translates to
\begin{equation*}
  A\bar{v}_{j} = \bar{\delta}_{j-1} \bar{v}_{j-1} + \bar{\gamma}_j \bar{v}_j +  \bar{\delta}_j \bar{v}_{j+1} + \bar{\delta}_j ( \bar{\bold{v}}_{j+1} - \bar{v}_{j+1} ) , \qquad 0 \leq j < i-l
\end{equation*}
or written in matrix notation:
\begin{equation} \label{eq:AV_BAR}
  A \bar{V}_j = \bar{V}_{j+1} \bar{H}_{j+1,j} + (\bar{\boldsymbol{V}}_{j+1}-\bar{V}_{j+1}) \bar{\Delta}_{j+1,j}, \qquad 1 \leq j \leq i-l,
\end{equation}
where $\bar{\Delta}_{j+1,j}$ is
\begin{equation*}
\bar{\Delta}_{j+1,j} =
\left(\begin{array}{cccc} 
0&&& \\
\bar{\delta}_0&0&& \\
&\bar{\delta}_1&0& \\
&&\ddots&0 \\
&&&\bar{\delta}_{j-1}
\end{array}\right).
\end{equation*}
Note that the Moore-Penrose (left) pseudo-inverse $\bar{\Delta}_{j+1,j}^{+} = (\bar{\Delta}_{j+1,j}^* \bar{\Delta}_{j+1,j})^{-1} \bar{\Delta}_{j+1,j}^*$ of this lower diagonal matrix is an upper diagonal matrix, where $\bar{\Delta}_{j+1,j}^*$ is the Hermitian transpose of $\bar{\Delta}_{j+1,j}$.

By setting $\Theta_j^{\bar{v}} = [\theta^{\bar{v}}_0,\theta^{\bar{v}}_1,\ldots,\theta^{\bar{v}}_{j-1}] := [\bar{g}_{0,0}\xi^{\bar{v}}_0, \, \bar{g}_{1,1}\xi^{\bar{v}}_1, \, \ldots, \, \bar{g}_{j-1,j-1}\xi^{\bar{v}}_{j-1}]$, we obtain from \eqref{eq:vbar_rec} the matrix expression
\begin{equation} \label{eq:Z_BAR}
  \bar{Z}_j = \bar{V}_j \bar{G}_j + \Theta^{\bar{v}}_j, \qquad 1 \leq j \leq i-l.
\end{equation}

For $j \geq l$, the computed auxiliary vector $\bar{z}_{j+1}$ satisfies a finite precision version of the recurrence relation \eqref{eq:z_rec}, which pours down to
\begin{equation} \label{eq:zbar_rec}
  \bar{z}_{j+1} = ( A\bar{z}_{j} - \bar{\gamma}_{j-l} \bar{z}_j - \bar{\delta}_{j-l-1} \bar{z}_{j-1} ) / \bar{\delta}_{j-l} + \xi^{\bar{z}}_{j+1}, \qquad l \leq j < i,
\end{equation}
where the local rounding error $\xi^{\bar{z}}_{j+1}$ for $j \geq l$ is bounded by
\begin{equation*}
  \| \xi^{\bar{z}}_{j+1} \| \leq \left( (\mu \sqrt{n}+2) \frac{\|A\|}{|\bar{\delta}_{j-l}|} \|\bar{z}_j\| + 3 \frac{|\bar{\gamma}_{j-l}|}{|\bar{\delta}_{j-l}|} \|\bar{z}_j\| + 3 \frac{|\bar{\delta}_{j-l-1}|}{|\bar{\delta}_{j-l}|} \|\bar{z}_{j-1}\| \right) \epsilon.
\end{equation*}
Here $n$ is the number of rows/columns in the matrix $A$, and $\mu$ is the maximum number of non-zeros over all rows of $A$. In the first $l$ iterations of the algorithm, i.e.~for $0 \leq j < l$, the next auxiliary basis vector $\bar{z}_{j+1}$ is not yet computed recursively, but is instead computed directly by applying the matrix $(A-\sigma_jI)$ to the previous basis vector $\bar{z}_j$, i.e.~in finite precision
\begin{equation} \label{eq:zbar_rec_b}
  \bar{z}_{j+1} = (A-\sigma_jI) \, \bar{z}_j + \xi^{\bar{z}}_{j+1}, \qquad 0 \leq j < l.
\end{equation} 
This implies the local rounding error can be bounded by $\| \xi^{\bar{z}}_{j+1} \| \leq \mu \sqrt{n} \, \|A-\sigma_j I\| \, \|\bar{z}_j\| \, \epsilon$ when $j < l$.
Expressions \eqref{eq:zbar_rec} and \eqref{eq:zbar_rec_b} can be summarized in matrix notation as:
\begin{equation} \label{eq:AZ_BAR}
  A \bar{Z}_j = \bar{Z}_{j+1} \bar{B}_{j+1,j} + \Theta^{\bar{z}}_j, \qquad 1 \leq j \leq i,
\end{equation}
where $\Theta^{\bar{z}}_j = [\theta^{\bar{z}}_0,\theta^{\bar{z}}_1,\ldots,\theta^{\bar{z}}_{j-1}]$, with $\theta^{\bar{z}}_{k} = \xi^{\bar{z}}_{k+1}$ for $0 \leq k < l$, and $\theta^{\bar{z}}_{k} = \bar{\delta}_{k-l}\xi^{\bar{z}}_{k+1}$ for $l \leq k < i$.

Furthermore, the recursive definitions of the scalar coefficients $\bar{\gamma}_j$ and $\bar{\delta}_j$ in Alg.~\ref{algo:PIPELCG} imply that in iteration $i$ the following matrix relations hold:
\begin{equation} \label{eq:rec_coeff}
  \bar{G}_{j+1}  \bar{B}_{j+1,j} = \bar{H}_{j+1,j} \bar{G}_j, \qquad 1 \leq j \leq i-l.
\end{equation}

%Since the p($\ell$)-CG algorithm does not compute a residual vector, we do not compute the residual gap like in Section \ref{sec:pipecg}. 
The (scaled) vector $\bar{v}_{i-l}$ is used in p($\ell$)-CG to update the search direction $\bar{p}_{i-l}$. Hence we aim to derive an expression for the gap between the true basis vector $\bar{\bold{v}}_{j+1}$ and the corresponding computed vector $\bar{v}_{j+1}$ in each iteration of the p($\ell$)-CG algorithm. We denote this gap by re-using the notation $f_{j}$ in this setting, i.e.~$f_{j+1} := \bar{\bold{v}}_{j+1} - \bar{v}_{j+1}$. In matrix notation we define:
\begin{equation}
 \mathcal{F}_{j+1} := \bar{\boldsymbol{V}}_{j+1} - \bar{V}_{j+1}.
\end{equation}

%This is equivalent to computing the residual gap, since the residual $r_j$ is simply the scaled version of $v_j$ in each iteration, see \cite{van2003iterative}.

In analogy to Section \ref{sec:rounding_pcg} we will write the gap $\mathcal{F}_{j+1}$ for the vectors $v_0,\ldots,v_j$ in terms of the gap for the auxiliary basis vectors $z_0,\ldots,z_j$. We define
\begin{equation}
 \mathcal{G}_{j+1} := \bar{\boldsymbol{Z}}_{j+1} - \bar{Z}_{j+1}.
\end{equation}
where the true basis vectors $\bar{\boldsymbol{Z}}_{j+1}$ are defined as in \eqref{eq:z_ex}, i.e.:
\begin{equation} \label{eq:z_rec3}
  \bar{\bold{z}}_{j}:= \left\{ \begin{matrix} \bar{\bold{v}}_{0}, & j=0, \\ P_{j}(A)\bar{\bold{v}}_{0}, & 0<j\leq l, \\ P_{l}(A)\bar{\bold{v}}_{j-l}, & j>l, \end{matrix} \right.
\end{equation}
It readily follows from this definition and the relation \eqref{eq:v_arnoldi} that 
\begin{equation}\label{eq:z_rec2}
	\bar{\bold{z}}_{j+1} = ( A \bar{z}_j - \bar{\gamma}_{j-l} \bar{z}_{j} - \bar{\delta}_{j-l-1} \bar{z}_{j-l-1} ) / \bar{\delta}_{j-l}, \qquad l \leq j < i,
\end{equation}
which is the analogue to expression \eqref{eq:zbar_rec} without the local rounding error term.
 
The gaps $\mathcal{F}_{j+1}$ and $\mathcal{G}_{j+1}$ in p($\ell$)-CG can be computed as follows. 
By subsequently subtracting $\bar{z}_{j+1}$ from both sides of \eqref{eq:z_rec2}, one obtains the following relation in matrix notation:
\begin{equation} \label{eq:AZ_BAR2}
	A\bar{Z}_{j} = \bar{Z}_{j+1} \bar{B}_{j+1,j} + (\bar{\boldsymbol{Z}}_{j+1}-\bar{Z}_{j+1}) \bar{\nabla}_{j+1,j}, \qquad 1 \leq j \leq i,
\end{equation}
where $\bar{\nabla}_{j+1,j}$ is
\begin{equation*}
\bar{\nabla}_{j+1,j} =
\left(\begin{array}{cccccc} 
0&&&&& \\
1&\ddots&&&& \\
&\ddots&0&&& \\
&&1&0&& \\
&&&\bar{\delta}_0&\ddots& \\
&&&&\ddots&0 \\
&&&&&\bar{\delta}_{j-l-1}
\end{array}\right).
\end{equation*}
It immediately follows by comparing \eqref{eq:AZ_BAR} to \eqref{eq:AZ_BAR2} that the gap $\mathcal{G}_{j+1}$ can be expressed as
\begin{equation} \label{eq:gap_plcg3}
\mathcal{G}_{j+1} = \bar{\boldsymbol{Z}}_{j+1} - \bar{Z}_{j+1} =  \Theta^{\bar{z}}_j \, \bar{\nabla}_{j+1,j}^{+}.
\end{equation}
Here $\bar{\nabla}_{j+1,j}^{+}$ should again be interpreted as a pseudo-inverse, i.e.~$\bar{\nabla}_{j+1,j}^{+} = (\bar{\nabla}_{j+1,j}^* \bar{\nabla}_{j+1,j})^{-1} \bar{\nabla}_{j+1,j}^*$.
The gap $\mathcal{F}_{j+1}$ can then be computed in terms of the gap $\mathcal{G}_{j+1}$. It holds that
\begin{align} \label{eq:gap_plcg2}
\mathcal{F}_{j+1} &= \bar{\boldsymbol{V}}_{j+1} - \bar{V}_{j+1} \notag \\
				&\stackrel{\eqref{eq:AV_BAR}}{=}  (A \bar{V}_j - \bar{V}_{j+1} \bar{H}_{j+1,j}) \bar{\Delta}^{+}_{j+1,j} \notag \\
				&\stackrel{\eqref{eq:Z_BAR}}{=} (A \bar{Z}_j \bar{G}^{-1}_j - \bar{Z}_{j+1} \bar{G}^{-1}_{j+1} \bar{H}_{j+1,j} - A \Theta^{\bar{v}}_j \bar{G}^{-1}_j  + \Theta^{\bar{v}}_{j+1} \bar{G}^{-1}_{j+1} \bar{H}_{j+1,j}) \bar{\Delta}^{+}_{j+1,j} \notag \\
				&\stackrel{\eqref{eq:rec_coeff}}{=} (A \bar{Z}_j \bar{G}^{-1}_j - \bar{Z}_{j+1} \bar{B}_{j+1,j} \bar{G}^{-1}_j - A \Theta^{\bar{v}}_j \bar{G}^{-1}_j  + \Theta^{\bar{v}}_{j+1} \bar{G}^{-1}_{j+1} \bar{H}_{j+1,j}) \bar{\Delta}^{+}_{j+1,j} \notag \\
				&\stackrel{\eqref{eq:AZ_BAR2}}{=} (\mathcal{G}_{j+1}  \bar{\nabla}_{j+1,j}\bar{G}^{-1}_j - A \Theta^{\bar{v}}_j \bar{G}^{-1}_j  + \Theta^{\bar{v}}_{j+1} \bar{G}^{-1}_{j+1} \bar{H}_{j+1,j}) \bar{\Delta}^{+}_{j+1,j}.
\end{align}
Alternatively, the formula for the gap $\mathcal{F}_{j+1} = \bar{\boldsymbol{V}}_{j+1} - \bar{V}_{j+1}$ in the p($\ell$)-CG algorithm can be derived directly by using expression \eqref{eq:AZ_BAR} as follows:
\begin{align} \label{eq:gap_plcg}
\mathcal{F}_{j+1} %&= V_{j+1} - \bar{V}_{j+1} \notag \\
				%&\stackrel{\eqref{eq:AV_BAR}}{=}  (A \bar{V}_j - \bar{V}_{j+1} \bar{H}_{j+1,j}) \bar{\Delta}^{-1}_{j+1,j} \notag \\
				&\stackrel{\eqref{eq:Z_BAR}}{=}  (A \bar{Z}_j \bar{G}^{-1}_j - \bar{Z}_{j+1} \bar{G}^{-1}_{j+1} \bar{H}_{j+1,j} - A \Theta^{\bar{v}}_j \bar{G}^{-1}_j  + \Theta^{\bar{v}}_{j+1} \bar{G}^{-1}_{j+1} \bar{H}_{j+1,j}) \bar{\Delta}^{+}_{j+1,j} \notag \\
				&\stackrel{\eqref{eq:AZ_BAR}}{=}  (\bar{Z}_{j+1} \bar{B}_{j+1,j} \bar{G}^{-1}_j - \bar{Z}_{j+1} \bar{G}^{-1}_{j+1} \bar{H}_{j+1,j} + \Theta^{\bar{z}}_j \bar{G}^{-1}_j - A \Theta^{\bar{v}}_j \bar{G}^{-1}_j  + \Theta^{\bar{v}}_{j+1} \bar{G}^{-1}_{j+1} \bar{H}_{j+1,j}) \bar{\Delta}^{+}_{j+1,j} \notag \\
				&\stackrel{\eqref{eq:rec_coeff}}{=} (\Theta^{\bar{z}}_j - A \Theta^{\bar{v}}_j + \Theta^{\bar{v}}_{j+1} \bar{B}_{j+1,j} ) \, \bar{G}^{-1}_j  \bar{\Delta}^{+}_{j+1,j},
\end{align}
which is identical to expression \eqref{eq:gap_plcg2} by performing the substitution \eqref{eq:gap_plcg3}.
Consequently, it is clear that in p($\ell$)-CG the local rounding errors in $\Theta^{\bar{z}}_j$, $A \Theta^{\bar{v}}_j$ and $\Theta^{\bar{v}}_{j+1} \bar{B}_{j+1,j}$ are possibly amplified by the entries of the matrix $\bar{G}^{-1}_j\bar{\Delta}^{+}_{j+1,j}$. %and %$\bar{G}^{-1}_{j+1} \bar{H}_{j+1,j}\bar{\Delta}^{-1}_{j+1,j}$ 
%$\bar{B}_{j+1,j} \bar{G}^{-1}_j \bar{\Delta}^{-1}_{j+1,j}$ respectively. 
More insights on the interpretation of expressions \eqref{eq:gap_plcg2}-\eqref{eq:gap_plcg} are provided in Section \ref{sec:further}.

\subsection{Relation to the residual gap in p($\ell$)-CG} \label{sec:residual}

It is well-known that in exact arithmetic the residual $b-Ax_j$ is closely related to the basis vector $v_j$ via the following relation, see \cite{saad2003iterative,cornelis2017communication}:
\begin{equation} \label{eq:trures}
	b-Ax_j = - \delta_{j-1} (e_j^T y_j) v_j = \|b-Ax_j\| \, v_j, \qquad \text{with~} y_j = H_{j,j}^{-1} \|r_0\| e_1,
\end{equation}
where the last equality follows from the fact that $v_j$ is normalized. Furthermore, we recall that although no residual is computed (explicitly nor recursively) in the p($\ell$)-CG algorithm, the residual norm is computed in Algorithm \ref{algo:PIPELCG} as $\|r_j\| = |\zeta_j|$, see Remark \ref{remark:residualnorm}. 

In the finite precision framework the true residual, denoted as $\bar{\bold{r}}_j$, is thus related to the p($\ell$)-CG basis vector $\bar{\bold{v}}_j$, defined by \eqref{eq:v_arnoldi}, as 
\begin{equation} \label{eq:extra_res1}
	\bar{\bold{r}}_j = b-A\bar{x}_j %= -\bar{\delta}_{j-1} (e_j^T \bar{y}_j) \bar{v}_j 
	=  \|b-A\bar{x}_j\| \, \bar{\bold{v}}_j, \qquad j \geq 0.
\end{equation}
Based on the same expression \eqref{eq:trures}, we define the implicitly computed recursive residual $\bar{r}_j$, which corresponds to the residual norm $\|\bar{r}_j\| = |\bar{\zeta}_j|$, as
\begin{equation} \label{eq:extra_res2}
	\bar{r}_j = \bar{r}_0 - \bar{V}_{j+1} \bar{H}_{j+1,j} \bar{y}_j = -\bar{\delta}_{j-1} (e_j^T \bar{y}_j) \, \bar{v}_j =  |\bar{\zeta}_j| \, \bar{v}_j, \qquad  \text{with~} \bar{y}_j = \bar{H}_{j,j}^{-1} \|\bar{r}_0\| e_1, \qquad j \geq 0,
\end{equation}
where $\bar{v}_j$ is the recursively computed basis vector defined by \eqref{eq:vbar_rec}. Hence, the %implicit 
gap between the true residual and the computed residual can be related directly to the basis vectors $\bar{\bold{v}}_j$ and $\bar{v}_j$ as follows:
\begin{align} \label{eq:resgap}
	\bar{\bold{r}}_j - \bar{r}_j &= \|b-A\bar{x}_j\| \, \bar{\bold{v}}_j - |\bar{\zeta}_j| \, \bar{v}_j \notag \\
															 &=  \|b-A\bar{x}_j\| \, (\bar{\bold{v}}_j - \bar{v}_j) + (\|b-A\bar{x}_j\| - |\bar{\zeta}_j|) \, \bar{v}_j, \qquad j \geq 0. 
\end{align}
The latter equality translates in matrix notation to
\begin{equation} \label{eq:gap_rlcg}
	\bar{\boldsymbol{R}}_{j+1} - \bar{R}_{j+1} = (\bar{\boldsymbol{V}}_{j+1} - \bar{V}_{j+1}) \, \bar{\Gamma}_{j+1} + \bar{V}_{j+1} \, \bar{\Omega}_{j+1},
\end{equation}
where $\bar{\Gamma}_{j+1}$ and $\bar{\Omega}_{j+1}$ are diagonal scaling matrices that are defined as
\begin{equation*}
  \bar{\Gamma}_{j+1} = 
  \left(\begin{array}{ccc} 
    \|b-A\bar{x}_0\|&& \\
    &\ddots& \\
    &&\|b-A\bar{x}_j\|
  \end{array}\right),
\qquad
  \bar{\Omega}_{j+1} = \bar{\Gamma}_{j+1} -  
  \left(\begin{array}{ccc} 
    |\bar{\zeta}_0|&& \\
    &\ddots& \\
    &&|\bar{\zeta}_j|
  \end{array}\right).
\end{equation*}
%If the residual norm $\|b-A\bar{x}_j\|$ is computed in each iteration of Algorithm \ref{algo:PIPELCG}, the residual gap can be computed according to expression \eqref{eq:resgap} in practice. 
%Expression \eqref{eq:resgap} indicates the stagnation point for the residual norm in p($\ell$)-CG. Indeed, when the norm of the residual gap $\|\bar{\bold{r}}_j - \bar{r}_j\|$ becomes larger than the norm of the true residual $\|b-A\bar{x}_j\|$, convergence stagnates and maximal attainable accuracy is reached, see Fig.~\ref{fig:figure2} and Fig.~\ref{fig:figure5}.
The intuitive expression \eqref{eq:gap_rlcg} directly links the residual gap $\bar{\bold{r}}_j - \bar{r}_j$ to the basis vector gap $\bar{\bold{v}}_{j} - \bar{v}_{j}$ characterized by expression \eqref{eq:gap_plcg}. The expression for the residual gap consists of two parts. The first term is the (possibly dramatically) increasing basis vector gap $\bar{\bold{v}}_{j} - \bar{v}_{j}$, which depends on the inverse of the basis transformation matrix $\bar{G}_j$ as established in Section \ref{sec:local}, see \eqref{eq:gap_plcg}, multiplied by the (not necessarily monotonically) decreasing true residual norm $\|b-A\bar{x}_j\|$, which %for any iterative method the true residual norm 
stagnates eventually. 
%This term dominates the first part of the residual history, as illustrated by Fig.\,\ref{fig:figure2} (up to around iteration 300). 
The second error term consists of the vector $\bar{v}_{j}$, whose norm approximates one, multiplied by the scalar factor $\|b-A\bar{x}_j\| - |\bar{\zeta}_j|$, which becomes increasingly large relative to $\|b-A\bar{x}_j\|$ as the algorithm converges. 
This second term becomes dominant when the algorithm gets close to the stagnation point, see Fig.\,\ref{fig:figure2}%(after iteration 300)
. Indeed, stagnation is reached when the norm of the residual gap $\|\bar{\bold{r}}_j - \bar{r}_j\|$ equals the norm of the true residual $\|b-A\bar{x}_j\|$, or, alternatively, when $\|b-A\bar{x}_j\| - |\bar{\zeta}_j|$ is of the same order of magnitude as $\|b-A\bar{x}_j\|$, since it holds that	$| \, \|b-A\bar{x}_j\| - \|\bar{r}_j\| \, | \leq \|(b-A\bar{x}_j) - \bar{r}_j\|$.

The analysis can be taken one step further by considering %the true residual $b-A\bar{x}_j$. To this aim we consider 
the finite precision variant of the recurrence relation for $\bar{x}_j$ in Algorithm \ref{algo:PIPELCG}:
\begin{equation} \label{eq:exp_x}
	\bar{x}_j = \bar{x}_{j-1} + \bar{\zeta}_{j-1} \bar{p}_{j-1} + \xi^{\bar{x}}_j %\notag \\
						%= \bar{x}_0 + \bar{P}_{j} [\bar{\zeta}_0,\bar{\zeta}_1,\ldots,\bar{\zeta}_{j-1}]^T  + \sum_{k=1}^j \xi^{\bar{x}}_k %\notag \\
						= \bar{x}_0 + \bar{P}_{j} \bar{L}_j^{-1} \|\bar{r}_0\| e_1 + \Theta_j^{\bar{x}} \, \bar{\boldsymbol{1}},
\end{equation}
where $\bar{L}_{j}$ is the lower triangular part in the LU factorization of the tridiagonal matrix $\bar{H}_{j,j} = \bar{L}_{j} \bar{U}_{j}$ (which is implicitly computed in Alg.\,\ref{algo:PIPELCG}), $\Theta_j^{\bar{x}} = [\xi^{\bar{x}}_0,\xi^{\bar{x}}_1,\ldots,\xi^{\bar{x}}_{j-1}]$ with $\|\xi^{\bar{x}}_j\| \leq (\|\bar{x}_{j-1}\| + 2 \, |\bar{\zeta}_{j-1}| \, \|\bar{p}_{j-1}\|) \, \epsilon$ are local rounding errors, and $\bar{\boldsymbol{1}} = [1,1,\ldots,1]^T$. 
Similarly, the finite precision recurrence relation for $\bar{p}_j$ in Alg.\,\ref{algo:PIPELCG} is
\begin{equation} \label{eq:exp_p}
	\bar{p}_j = (\bar{v}_{j} - \bar{\delta}_{j-1} \bar{p}_{j-1})/\bar{\eta}_j + \xi^{\bar{p}}_j \qquad \Leftrightarrow \qquad \bar{V}_{j+1} = \bar{P}_{j+1} \bar{U}_{j+1} + \Theta^{\bar{p}}_{j+1},
\end{equation} 
where $\bar{U}_{j}$ is the upper triangular part of $\bar{H}_{j,j} = \bar{L}_{j} \bar{U}_{j}$ and $ \Theta^{\bar{p}}_{j+1} = -[\bar{\eta}_0\xi^{\bar{p}}_0,\bar{\eta}_1\xi^{\bar{p}}_1,\ldots,\bar{\eta}_j\xi^{\bar{p}}_j]$ with $\|\xi^{\bar{p}}_j\| \leq (2 / \bar{\eta}_j \, \|\bar{v}_{j-1}\|  + 3 \, |\bar{\delta}_{j-1}|/ \bar{\eta}_j  \, \|\bar{p}_{j-1}\|) \, \epsilon$ are local rounding errors.
%This relation translates in matrix notation to
%\begin{equation} 
	%\bar{V}_{j+1} = \bar{P}_{j+1} \bar{U}_{j+1} + \Theta^{\bar{p}}_{j+1},
%\end{equation}
Substitution of expression \eqref{eq:exp_p}, i.e.~$\bar{P}_j = (\bar{V}_j - \Theta_j^{\bar{p}}) \, \bar{U}^{-1}_{j}$, 
 into equation \eqref{eq:exp_x} yields
\begin{equation} \label{eq:exp_x2}
	\bar{x}_j = \bar{x}_0 + (\bar{V}_j - \Theta_j^{\bar{p}}) \, \bar{U}^{-1}_{j} \bar{L}_j^{-1} \|\bar{r}_0\| e_1 + \Theta_j^{\bar{x}} \, \bar{\boldsymbol{1}} 
						%= \bar{x}_0 + \bar{V}_j \bar{H}^{-1}_{j,j}  \|\bar{r}_0\| e_1 - \Theta_j^{\bar{p}} \bar{H}^{-1}_{j,j}  \|\bar{r}_0\| e_1 + \sum_{k=1}^j \xi^{\bar{x}}_k ,
						= \bar{x}_0 + \bar{V}_j \bar{y}_j - \Theta_j^{\bar{p}} \bar{y}_j + \Theta_j^{\bar{x}} \, \bar{\boldsymbol{1}},
\end{equation}
where we used that $\bar{y}_j = \bar{H}_{j,j}^{-1} \|\bar{r}_0\| e_1$, see \eqref{eq:extra_res2}.
Consequently, the true residual $\bar{\bold{r}}_j$ can be written as
\begin{align} \label{eq:extra_res3}
	\bar{\bold{r}}_j = b-A\bar{x}_j  
								&\stackrel{\eqref{eq:exp_x2}}{=} \bar{r}_0 - A \bar{V}_j \bar{y}_j + A \Theta_j^{\bar{p}} \bar{y}_j - A \Theta_j^{\bar{x}} \, \bar{\boldsymbol{1}}  \notag \\
								&\stackrel{\eqref{eq:AV_BAR}}{=} \bar{r}_0 - \bar{V}_{j+1} \bar{H}_{j+1,j} \bar{y}_j - (\bar{\boldsymbol{V}}_{j+1} - \bar{V}_{j+1}) \bar{\Delta}_{j+1,j} \bar{y}_j+ A \Theta_j^{\bar{p}} \bar{y}_j - A \Theta_j^{\bar{x}} \, \bar{\boldsymbol{1}}  \notag \\
								&\stackrel{\eqref{eq:extra_res2}}{=} \bar{r}_j - (\bar{\boldsymbol{V}}_{j+1} - \bar{V}_{j+1}) \bar{\Delta}_{j+1,j} \bar{y}_j + A \Theta_j^{\bar{p}} \bar{y}_j - A \Theta_j^{\bar{x}} \, \bar{\boldsymbol{1}}.
\end{align}
Expression \eqref{eq:extra_res3} indicates that the gap between $\bar{\bold{r}}_j$ and $\bar{r}_j$ critically depends on the basis vector gaps $\bar{\boldsymbol{V}}_{j+1} - \bar{V}_{j+1}$, which are governed by the inverse of the basis transformation matrix $\bar{G}_j$, see Section \ref{sec:local}. Linking back to expression \eqref{eq:resgap}, it is indeed clear that the second term in the expression for the gap $\bar{\bold{r}}_j - \bar{r}_j $ can also be described by the basis vector gaps, since \eqref{eq:extra_res3} implies that
\begin{equation}
	\|b-A\bar{x}_j\| - |\bar{\zeta}_j| 
		%&= \|\bar{r}_j - (\bar{\boldsymbol{V}}_{j+1} - \bar{V}_{j+1}) \bar{\Delta}_{j+1,j} \bar{y}_j + A \Theta_j^{\bar{p}} \bar{y}_j - A \Theta_j^{\bar{x}} \, \bar{\boldsymbol{1}}\| - \|\bar{r}_j\| \notag \\
		\leq \| (\bar{\boldsymbol{V}}_{j+1} - \bar{V}_{j+1}) \bar{\Delta}_{j+1,j} \bar{y}_j + A \Theta_j^{\bar{p}} \bar{y}_j - A \Theta_j^{\bar{x}} \, \bar{\boldsymbol{1}} \|.
\end{equation}

\subsection{Local rounding error behavior in preconditioned p($\ell$)-CG} \label{sec:preconditioned}
 
In the context of the solution of large-scale systems of equations preconditioning plays a key role in the development of efficient iterative methods. The inclusion of a preconditioner in the p($\ell$)-CG algorithm is (theoretically) straightforward. The preconditioned version of the p($\ell$)-CG algorithm is shown in Algorithm \ref{algo:pplCG}. Note that in contrast to Algorithm \ref{algo:PIPELCG} the basis vectors $V_j$ and $Z_j$ denote the preconditioned versions of the basis vectors in Algorithm \ref{algo:pplCG}. Furthermore, a recurrence relation is introduced for the unpreconditioned auxiliary basis vector $\hat{z}_{j+1} = M z_{j+1}$, which is required to compute the next preconditioned auxiliary basis vector $z_{j+1}$ through the \textsc{spmv}.%, see Algorithm \ref{algo:pplCG}.

The most important observation to be made from Algorithm \ref{algo:pplCG} in the context of rounding error propagation is the following: the recurrences for the preconditioned variables (i.e.:~$v_j$ and $z_j$) do not explicitly use the unpreconditioned variables (i.e.:~$\hat{z}_j$) and vice versa. This implies that the behavior of rounding errors in the basis $V_j$ throughout Algorithm \ref{algo:pplCG} is described by expression \eqref{eq:gap_plcg} in analogy to the unpreconditioned case. 
One notable difference lies in the computation of the entries of the matrix $G_j$. Given the basis vectors $V_{i-l+1}$, $\hat{Z}_{i+1}$ and $Z_{i+1}$, the Euclidean dot product used in Algorithm \ref{algo:PIPELCG} is replaced by the $M$-dot product in Algorithm \ref{algo:pplCG}. For $i < l$ we have
\begin{equation} \label{eq:gdotpr1prec}
g_{j,i+1} = (z_{i+1},z_{j})_M = (\hat{z}_{i+1},z_{j}) , \qquad j=0,\ldots,i+1,
\end{equation}
and for $i \geq l$ it holds that
\begin{equation} \label{eq:gdotpr2prec}
g_{j,i+1}=\left\{ 
  \begin{matrix}
	  (z_{i+1},v_{j})_M = (\hat{z}_{i+1},v_{j}), & j=\max(0,i-2l+1),\ldots,i-l+1, \\ 
	  (z_{i+1},z_{j})_M = (\hat{z}_{i+1},v_{j}), & j=i-l+2,\ldots,i+1.
	\end{matrix}
\right. 
\end{equation}
We remark for completeness that the bounds on the local rounding errors $\xi_j^{\bar{v}}$ and $\xi_j^{\bar{z}}$ also include the norm of the preconditioner where required. 

\begin{algorithm}[t]
{\footnotesize
\caption{Preconditioned p($l$)-CG \hfill \textbf{Input:} $A$, $M^{-1}$, $b$, $x_0$, $l$, $m$, $\sigma_0,\ldots,\sigma_{l-1}$, $\tau$}\label{algo:pplCG}
\begin{algorithmic}[1]
\State $\hat{r}_{0} = b-Ax_{0}; ~ r_{0}:=M^{-1}\hat{r}_{0} ;$ $v_{0}:= r_{0}/{\|r_{0}\|}_M; \hat{z}_{0}:= \hat{r}_{0}/{\|r_{0}\|}_M;$ $z_{0}:=v_{0}; ~ g_{0,0}:=1;$
\For {$i=0,\ldots, m+l$}
\State $\hat{z}_{i+1}:=\left\{ \begin{matrix}Az_{i}-\sigma_{i}\hat{z}_{i} & i<l \\ Az_{i}, & i \geq l \end{matrix}\right.$
\State $z_{i+1} = M^{-1} \hat{z}_{i+1};$
\State $a:=i-l$
\If {$a\geq 0$}
\State $g_{j,a+1} := (g_{j,a+1}-\sum_{k=a+1-2l}^{j-1}g_{k,j}g_{k,a+1})/g_{j,j}; \qquad j=a-l+2,\ldots,a$  
\State $g_{a+1,a+1}:= \sqrt{g_{a+1,a+1}-\sum_{k=a+1-2l}^{a}g_{k,a+1}^2};$
\State \# Check for breakdown and restart if required
\If {$a<l$}
\State $\gamma_{a}:=(g_{a,a+1}+\sigma_{a}g_{a,a}-\delta_{a-1}g_{a-1,a})/g_{a,a};$
\State $\delta_{a}:=g_{a+1,a+1}/g_{a,a};$
\Else
\State $\gamma_{a}:=(g_{a,a}\gamma_{a-l}+g_{a,a+1}\delta_{a-l}-\delta_{a-1}g_{a-1,a})/g_{a,a};$
\State $\delta_{a}:=(g_{a+1,a+1}\delta_{a-l})/g_{a,a};$
\EndIf
\State \textbf{end if}
\State $v_{a+1} := (z_{a+1} - \sum_{j=a-2l+1}^{a} g_{j,a+1} v_{j})/g_{a+1,a+1};$
\State $\hat{z}_{i+1} := (\hat{z}_{i+1} - \gamma_{a} \hat{z}_{i}- \delta_{a-1} \hat{z}_{i-1})/\delta_{a};$
\State $z_{i+1} := (z_{i+1} - \gamma_{a} z_{i}- \delta_{a-1} z_{i-1})/\delta_{a};$
\EndIf
\State \textbf{end if}
\If {$a<0$}
\State $g_{j,i+1}:=(\hat{z}_{i+1},z_{j}); \qquad j=0,\ldots,i+1$
\Else 
\State $g_{j,i+1}:=\left\{ \begin{matrix}(\hat{z}_{i+1},v_{j}); & j=\max(0,i-2l+1),\ldots,a+1 \\ (\hat{z}_{i+1},z_{j});  &j=a+2,\ldots,i+1 \end{matrix}\right.$
\EndIf
\State \textbf{end if}
\If {$a=0$}
\State $\eta_{0}:=\gamma_{0};$
\State $\zeta_{0}:={\|r_{0}\|}_M;$
\State $p_{0}:=v_0/\eta_0;$
\Else \, \textbf{if} {$a\geq 1$} \textbf{then}
\State $\lambda_{a}:=\delta_{a-1}/\eta_{a-1};$ 
\State $\eta_{a}:=\gamma_{a}-\lambda_{a}\delta_{a-1};$
\State $\zeta_{a}=-\lambda_{a}\zeta_{a-1};$
\State $p_{a}=(v_{a}-\delta_{a-1}p_{a-1})/\eta_{a};$
\State $x_{a}=x_{a-1}+\zeta_{a-1}p_{a-1};$
\If {$|\zeta_a|/\|r_0\|_M < \tau$}
\State RETURN;
\EndIf
\State \textbf{end if} 
\EndIf
\State \textbf{end if}
\EndFor 
\State \textbf{end for}
\end{algorithmic}
}
\end{algorithm}

\section{Further analysis of the local rounding error behavior} \label{sec:further}

The matrices $\bar{G}^{-1}_j$ fulfill a crucial role in the propagation of local rounding errors in p($\ell$)-CG, see \eqref{eq:gap_plcg2}-\eqref{eq:gap_plcg}. In this section we aim to establish useful practical bounds on the maximum norm of $G^{-1}_j$, i.e.
\begin{equation}
	\| G^{-1}_j \|_{\max} = \max_k \max_l | G^{-1}_j(k,l) |,
\end{equation}
in exact arithmetic to eventually obtain practical insights in its finite precision variant $\bar{G}^{-1}_j$.

\subsection{Establishing upper bounds on $\|{G}^{-1}_j\|_{\max}$} \label{sec:bounds}

%It is easy to see that in exact arithmetic the entries of $G^{-1}_{j}$ can be written as sums and products of the elements $g_{m,k}$. 
The inverse of the banded matrix $G_j$ is an upper triangular matrix, which can be expressed as
\begin{equation} \label{eq:expan}
	G^{-1}_j = \sum_{k=0}^{j-1} \left( - \Lambda^{-1} G_j^{\triangle} \right)^{k} \Lambda^{-1},
\end{equation}
where $\Lambda := [\delta_{mk} g_{m,k}]$ contains the diagonal of $G_j$, and $G_j^{\triangle} := G_j - \Lambda$ is the strictly upper triangular matrix obtained by setting the diagonal of $G_j$ to zero.
\begin{lemma} \label{lemma:lemma_0}
	Let the Krylov subspace basis transformation matrix $G_j$ be defined by $Z_j = G_j V_j$ for $1 \leq j \leq i-l+1$. Then it holds that
	\begin{equation}
		\|G_j\|_{\max} \leq  \|P_l(A)\|. \label{eq:est}
	\end{equation}
\end{lemma}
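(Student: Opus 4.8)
The plan is to reduce the entrywise max-norm of $G_j$ to the norms of the auxiliary basis vectors $z_m$, and then to invoke the definition \eqref{eq:z_ex} together with the orthonormality of $V_j$. Since $V_j = [v_0,\ldots,v_{j-1}]$ has orthonormal columns and $G_j$ is the (upper triangular) change-of-basis matrix relating $V_j$ and $Z_j$, its entries are precisely the coordinates of the $z_m$ in the orthonormal frame, namely $g_{k,m} = (v_k, z_m)$ for $0 \le k \le m \le j-1$. First I would record this identity, so that the Cauchy--Schwarz inequality together with $\|v_k\| = 1$ yields $|g_{k,m}| = |(v_k, z_m)| \le \|z_m\|$ at once. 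Taking the maximum over all entries then gives the clean reduction
\begin{equation*}
  \|G_j\|_{\max} = \max_{k,m} |g_{k,m}| \le \max_{0 \le m \le j-1} \|z_m\|,
\end{equation*}
so the entire statement hinges on bounding the columns of the auxiliary basis.

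Next I would evaluate $\max_m \|z_m\|$ by splitting into the three regimes of the definition \eqref{eq:z_ex}. For $m=0$ we have $z_0 = v_0$, hence $\|z_0\| = 1 = \|P_0(A)\|$; for the steady-state columns $m > l$ we have $z_m = P_l(A)v_{m-l}$ with $\|v_{m-l}\| = 1$, giving the desired bound $\|z_m\| \le \|P_l(A)\|$ directly; and for the pipeline-fill columns $0 < m \le l$ we have $z_m = P_m(A)v_0$, so only $\|z_m\| \le \|P_m(A)\|$ is immediate. Collecting the three cases, the lemma reduces to the purely operator-theoretic claim $\max_{0 \le m \le l} \|P_m(A)\| = \|P_l(A)\|$, i.e.\ that the partial products never exceed the full product in norm.

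The main obstacle is exactly this last dominance claim, since it is \emph{not} automatic for arbitrary shifts. Here I would exploit that $A$ is symmetric: every $P_m(A)$ is then normal (a polynomial in a Hermitian matrix, even for complex $\sigma_t$), and since all the factors commute we may write $P_l(A) = P_m(A)\,Q_{m,l}(A)$ with $Q_{m,l}(A) = \prod_{t=m}^{l-1}(A-\sigma_t I)$. If each elementary factor satisfies the natural separation condition that its smallest singular value is at least one (equivalently $\min_{\lambda}|\lambda - \sigma_t| \ge 1$ over the eigenvalues $\lambda$ of $A$), then $\sigma_{\min}(Q_{m,l}(A)) \ge 1$, and for the unit vector $u$ attaining $\|P_m(A)\| = \|P_m(A)u\|$ one gets $\|P_l(A)\| \ge \|P_l(A)u\| = \|Q_{m,l}(A)P_m(A)u\| \ge \|P_m(A)u\| = \|P_m(A)\|$. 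Combining this with the chain from the first two paragraphs closes the argument via $\|G_j\|_{\max} \le \max_m \|z_m\| \le \max_{0 \le m \le l}\|P_m(A)\| = \|P_l(A)\|$. I expect the delicate point to be this scalar/spectral comparison for the fill columns; the steady-state columns $m>l$ give the bound effortlessly, so the proof stands or falls on how one justifies (or assumes away) the dominance of $P_l$ over its partial products.
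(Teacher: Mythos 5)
Your opening reduction is precisely the paper's entire proof: the entries of $G_j$ are inner products $g_{m,k} = (z_k,v_m)$ against the orthonormal basis, so Cauchy--Schwarz with $\|v_m\|=1$ gives $|g_{m,k}| \le \|z_k\|$, and the paper then simply asserts $\|z_k\| \le \|P_l(A)\|$ for every column and stops (cf.\ \eqref{eq:est2}). Where you go further is in refusing to accept that last inequality for the pipeline-fill columns $0 < k \le l$, where $z_k = P_k(A)v_0$ and only $\|z_k\| \le \|P_k(A)\|$ is immediate. You are right to flag this: the dominance $\|P_k(A)\| \le \|P_l(A)\|$ of the full product over its partial products is not automatic for arbitrary shifts (for Chebyshev shifts placed inside the spectrum, a factor $A-\sigma_t I$ can have singular values well below one), and the paper's proof glosses over exactly this case -- indeed its parenthetical claim that $P_l(A)$ is Hermitian positive definite likewise holds only for suitable shift placement, and the paper later restricts Lemma~\ref{lemma:lemma_bound} to the principal submatrix $G_{l+1:j}$ precisely to sidestep the first $l$ columns. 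Your repair via the separation condition $\min_\lambda|\lambda-\sigma_t|\ge 1$ is a correct sufficient condition, but note it is an added hypothesis not present in the lemma statement; the alternatives consistent with the paper's intent are either to read the bound as a statement about the steady-state columns $k>l$ (where your argument, like the paper's, closes effortlessly) or to weaken the right-hand side to $\max_{0\le m\le l}\|P_m(A)\|$. In short: same approach as the paper on the essential step, with your version making explicit, and conditionally repairing, a gap that the paper's one-line proof leaves open.
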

\begin{proof}
	Since for any $0 \leq m \leq i-l$ the vector $v_m$ is normalized, i.e. $\|v_m\| = 1$, the following bound on the entries of $G_j$ holds
	\begin{equation} \label{eq:est2}
	  |g_{m,k}| = |(z_k,v_m)| \leq \|z_k\| \leq \|P_l(A)\| = \lambda_{\max}(P_l(A)), \quad 0 \leq m \leq i-l, \quad 0 \leq k \leq i,
	\end{equation}
where $\lambda_{\max}(P_l(A))$ is the largest eigenvalue of the Hermitian positive definite matrix $P_l(A)$.
\end{proof}

We now refine the above bounds on $\|G^{-1}_j\|_{\max}$ %, which dominates the amplification of rounding errors in the $j$-th iteration of p($\ell$)-CG, see \eqref{eq:gap_plcg}. 
%We consider the exact arithmetic framework, i.e.~with $g_{m,k} = z_k^T v_m$, and establish a bound on $\|G^{-1}_j\|_{\max}$ in this setting. 
%An upper bound on the norm $\|G^{-1}_j\|_{\max}$ can be established 
using the relation $Z_j = V_j G_j$ and the Lanczos relation $A V_j = V_j H_{j,j}$ for $1 \leq j \leq i-l$, where $H_{j,j}$ is the symmetric tridiagonal Lanczos matrix.

\begin{lemma} \label{lemma:lemma_1}
Let $j \geq l+1$ and let $V_{l+1:j} = [v_l,\ldots,v_{j-1}]$  and $Z_{l+1:j} = [z_l,\ldots,z_{j-1}]$ denote subsets of the Krylov bases $V_j$ and $Z_j$. Let $G_{l+1:j}$ be the principal submatrix of $G_{j}$ that is obtained by removing the first $l$ rows and columns of $G_{j}$. Then
\begin{equation} \label{eq:G_sub}
  G_{l+1:j} = V^T_{l+1:j} \, V_{1:j-l} \, P_l(H_{j-l,j-l}) = \left( P_l(H_{j-l,j-l}) \, V^T_{1:j-l} \, V_{l+1:j} \right)^T,
\end{equation}
where 
\begin{equation}
	V^T_{l+1:j} \, V_{1:j-l} = 
	\left(\begin{array}{ccccc} 
		0&&1&& \\
		&\ddots&&\ddots& \\
		&&\ddots&&1 \\
		&&&\ddots& \\
		&&&&0 \\
	\end{array}\right) \leftarrow (l+1)^{\text{th}}~\text{row}.
\end{equation}
\end{lemma}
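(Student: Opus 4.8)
The plan is to peel the identity apart into an algebraic rewriting of $G_{l+1:j}$ and a single commutation of $P_l(A)$ through the Lanczos basis. First I would exploit that the columns of $V_j$ are orthonormal: combined with the basis transformation $Z_j = V_j G_j$ this gives $g_{m,k} = (v_m, z_k)$, so that deleting the first $l$ rows and columns of $G_j$ amounts to forming the mixed Gram matrix $G_{l+1:j} = V^T_{l+1:j} Z_{l+1:j}$. Next I would use the defining relation \eqref{eq:z_ex}, i.e.\ $z_k = P_l(A) v_{k-l}$ for $k \geq l$, to recognize the shifted block of auxiliary vectors as $Z_{l+1:j} = P_l(A)\, V_{1:j-l}$. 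Together these two observations already yield $G_{l+1:j} = V^T_{l+1:j}\, P_l(A)\, V_{1:j-l}$, with the polynomial of $A$ sandwiched between the two basis blocks.

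The second step is to move $P_l(A)$ onto the reduced tridiagonal matrix. Starting from the square Lanczos relation $A V_{j-l} = V_{j-l} H_{j-l,j-l}$, an induction on the degree gives $A^k V_{j-l} = V_{j-l} H_{j-l,j-l}^k$ for all $k$, and hence, by linearity of $P_l$, the commutation $P_l(A)\, V_{1:j-l} = V_{1:j-l}\, P_l(H_{j-l,j-l})$. Substituting this into the expression above immediately produces the first claimed equality $G_{l+1:j} = V^T_{l+1:j}\, V_{1:j-l}\, P_l(H_{j-l,j-l})$.

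It then remains to read off the structure of the mixed Gram matrix $V^T_{l+1:j} V_{1:j-l}$ and to deduce the transposed form. By orthonormality its entry in row $r$ and column $s$ equals $(v_{l+r-1}, v_{s-1})$, which is one exactly when $s = l+r$ and zero otherwise; this places a single band of ones on the $l$-th super-diagonal, reproducing the shift pattern displayed in the statement. Finally, the second equality follows by transposition: $H_{j-l,j-l}$ is symmetric, hence so is $P_l(H_{j-l,j-l})$, and since $(V^T_{1:j-l} V_{l+1:j})^T = V^T_{l+1:j} V_{1:j-l}$ we obtain $(P_l(H_{j-l,j-l})\, V^T_{1:j-l} V_{l+1:j})^T = V^T_{l+1:j} V_{1:j-l}\, P_l(H_{j-l,j-l})$, as required.

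The step I expect to demand the most care is the commutation $P_l(A)\, V_{1:j-l} = V_{1:j-l}\, P_l(H_{j-l,j-l})$, because it rests on the \emph{square} Lanczos relation $A V_{j-l} = V_{j-l} H_{j-l,j-l}$ rather than on the full Arnoldi relation $A V_{j-l} = V_{j-l+1} H_{j-l+1,j-l}$. The dropped boundary contribution $\delta_{j-l-1} v_{j-l} e_{j-l}^T$ only influences the action of $P_l(A)$ on the last $l$ columns of $V_{1:j-l}$, where it reaches into $v_{j-l}, \ldots, v_{j-1}$; I would therefore make explicit that the identity is understood relative to the square Lanczos relation invoked throughout this section, and verify entrywise on the leading band that the two sides agree. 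A direct check of a small instance, say $l=1$ and $j=3$, is a convenient way to confirm both the placement of the super-diagonal of ones and the range of indices on which the equality holds exactly.
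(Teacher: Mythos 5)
Your proof follows essentially the same route as the paper's: the paper's entire argument is the one-line chain $G_{l+1:j} = V^T_{l+1:j}\,Z_{l+1:j} = V^T_{l+1:j}\,P_l(A)\,V_{1:j-l} = V^T_{l+1:j}\,V_{1:j-l}\,P_l(H_{j-l,j-l})$, invoking the definition \eqref{eq:z_ex} of the auxiliary basis and the commutation $P_l(A)V_j = V_j P_l(H_{j,j})$ exactly as you do, and reading off the shift structure of $V^T_{l+1:j}V_{1:j-l}$ from orthonormality. The boundary issue you flag in your final paragraph --- that the commutation rests on the square Lanczos relation rather than the full Arnoldi relation, so the dropped term $\delta_{j-l-1}v_{j-l}e_{j-l}^T$ makes the identity exact only on the leading band and not in the trailing $l$ rows and columns --- is a genuine subtlety that the paper's proof passes over silently, so your explicit caveat is a refinement of, not a departure from, the paper's argument.
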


\noindent Lemma \ref{lemma:lemma_1} effectively states that for any $j \geq l+1$ the principal submatrix $G_{l+1:j}$ of the basis transformation matrix $G_{j}$ can be obtained by shifting all entries of the matrix $P_l(H_{j,j})$ upward by $l$ places and subsequently selecting the leading $(j-l) \times (j-l)$ block.

\begin{proof}
The proof follows directly from the definition \eqref{eq:z_ex} and the Lanczos relation, which implies $P_l(A)V_j = V_j P_l(H_{j,j})$, and hence
\begin{equation}
  G_{l+1:j} = V^T_{l+1:j} \, Z_{l+1:j} = V^T_{l+1:j} \, P_l(A) \, V_{1:j-l} = V^T_{l+1:j} \, V_{1:j-l} \, P_l(H_{j-l,j-l}).
\end{equation}
\end{proof}

\begin{lemma} \label{lemma:lemma_bound}
Let $j \geq l+1$ and let $G_{l+1:j}$ be the principal submatrix of $G_{j}$ that is obtained by removing the first $l$ rows and columns of $G_{j}$. Let $V_{l+1:j} = [v_l,\ldots,v_{j-1}]$  and $Z_{l+1:j} = [z_l,\ldots,z_{j-1}]$ denote subsets of the bases $V_j$ and $Z_j$ respectively. Then the following bound holds:
\begin{equation} \label{eq:ineq1}
	\| G^{-1}_{l+1:j} \|_{\max} \leq \left( \min_{1 \leq k \leq j-l} \left| P_l(\theta^{j-l}_{k}) \right| \right)^{-1},
\end{equation}
where the scalar $\theta^{j-l}_{k}$ denotes the $k$-th Ritz value, i.e.~the $k$-th eigenvalue of the matrix $H_{j-l,j-l}$, with $1 \leq k \leq j-l$. Moreover, when the classic monomial Newton basis is considered, i.e.~the shifts $\sigma_j$ for $j=0,\ldots, l-1$ are set to zero, the following bound can alternatively be derived:
\begin{equation} \label{eq:ineq2}
 \| G^{-1}_{l+1:j} \|_{\max} \leq \left(|\lambda_{\max}(H_{j-l,j-l}^{-1})|\right)^{l}.
\end{equation}
\end{lemma}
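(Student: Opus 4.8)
The plan is to bound the entrywise max-norm by the spectral norm and then exploit the explicit description of $G_{l+1:j}$ furnished by Lemma \ref{lemma:lemma_1}. First I would invoke the elementary inequality $\|M\|_{\max} \le \|M\|_2$, valid for any matrix since $|M_{pq}| = |e_p^T M e_q| \le \|M\|_2$, to reduce the target estimate \eqref{eq:ineq1} to the spectral-norm bound $\|G^{-1}_{l+1:j}\|_2 \le (\min_k |P_l(\theta^{j-l}_k)|)^{-1}$, i.e.\ to the lower bound $\sigma_{\min}(G_{l+1:j}) \ge \min_k |P_l(\theta^{j-l}_k)|$ on the smallest singular value. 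Because $A$ is SPD, the compression $H_{j-l,j-l}$ is real symmetric and admits an orthogonal eigendecomposition $H_{j-l,j-l} = Q\,\diag(\theta^{j-l}_1,\ldots,\theta^{j-l}_{j-l})\,Q^T$; the polynomial functional calculus then yields $P_l(H_{j-l,j-l}) = Q\,\diag(P_l(\theta^{j-l}_1),\ldots,P_l(\theta^{j-l}_{j-l}))\,Q^T$, so $P_l(H_{j-l,j-l})$ is symmetric with eigenvalues $P_l(\theta^{j-l}_k)$ and hence $\sigma_{\min}(P_l(H_{j-l,j-l})) = \min_k |P_l(\theta^{j-l}_k)|$.

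Next I would feed in the factorization \eqref{eq:G_sub}, which writes $G_{l+1:j} = S\,P_l(H_{j-l,j-l})$ with the shift matrix $S := V^T_{l+1:j} V_{1:j-l}$. Since the columns of both $V_{l+1:j}$ and $V_{1:j-l}$ are drawn from the same orthonormal Lanczos basis, $S$ is a partial isometry: $S S^T$ and $S^T S$ are orthogonal projectors, every nonzero singular value of $S$ equals one, and $\|S\|_2 = 1$. The aim is then to argue that composing the symmetric polynomial factor with this norm-preserving shift does not shrink the smallest singular value, giving $\sigma_{\min}(G_{l+1:j}) \ge \sigma_{\min}(P_l(H_{j-l,j-l})) = \min_k |P_l(\theta^{j-l}_k)|$ and thus \eqref{eq:ineq1}. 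The vehicle for this is the ``shift up by $l$ and take the leading $(j-l)\times(j-l)$ block'' reading of Lemma \ref{lemma:lemma_1}, which exhibits $G_{l+1:j}$ as an upper-triangular band matrix with nonvanishing diagonal (a product of subdiagonal entries $\delta_i$, nonzero in the absence of Lanczos breakdown), so that invertibility is guaranteed and $\sigma_{\min}$ is controllable from below.

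Finally, the second bound \eqref{eq:ineq2} should follow by specializing \eqref{eq:ineq1} to the monomial choice $\sigma_0 = \cdots = \sigma_{l-1} = 0$, for which $P_l(t) = t^l$. Then $\min_k |P_l(\theta^{j-l}_k)| = \min_k |\theta^{j-l}_k|^l = (\min_k |\theta^{j-l}_k|)^l$, and since $H_{j-l,j-l}$ inherits positive definiteness from $A$ its Ritz values are positive, so the eigenvalues of $H_{j-l,j-l}^{-1}$ are the reciprocals $1/\theta^{j-l}_k$ and $\lambda_{\max}(H_{j-l,j-l}^{-1}) = 1/\min_k \theta^{j-l}_k = 1/\min_k |\theta^{j-l}_k|$. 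Substituting gives $(\min_k |P_l(\theta^{j-l}_k)|)^{-1} = (\min_k |\theta^{j-l}_k|)^{-l} = (\lambda_{\max}(H_{j-l,j-l}^{-1}))^l$, which is exactly \eqref{eq:ineq2}.

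The main obstacle is the middle step. As a square $(j-l)\times(j-l)$ operator the shift $S$ is nilpotent with an $l$-dimensional kernel, so one cannot naively write $(S\,P_l(H_{j-l,j-l}))^{-1} = P_l(H_{j-l,j-l})^{-1} S^{-1}$ nor bound $\sigma_{\min}$ by a crude submultiplicativity estimate; the ``shifting preserves norms'' intuition is only heuristic. The clean resolution is to work with the boundary-corrected, non-truncated Lanczos identity behind \eqref{eq:G_sub} — equivalently, to realize $G_{l+1:j}$ as the compression $R_2\,P_l(H_{j,j})\,R_1^T$ of the full $j\times j$ polynomial matrix between coordinate subspaces, with orthonormal selection maps $R_1,R_2$ of unit spectral norm — and to verify, using the triangular band structure and the nonzero diagonal, that this particular compression preserves the lower bound $\sigma_{\min} \ge \min_k |P_l(\theta^{j-l}_k)|$. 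Carrying out this singular-value comparison rigorously is the delicate part of the argument.
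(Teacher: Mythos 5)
Your skeleton matches the paper's: bound $\|\cdot\|_{\max}$ by the spectral norm, reduce to a lower bound on $\sigma_{\min}(G_{l+1:j})$ via the factorization of Lemma \ref{lemma:lemma_1}, diagonalize $P_l(H_{j-l,j-l})$ to read off $\min_k|P_l(\theta^{j-l}_k)|$, and obtain \eqref{eq:ineq2} by specializing to $P_l(t)=t^l$ and using positivity of the Ritz values. The second part is fine. But the first part has a genuine gap: you stop exactly where the real work happens. You correctly observe that the shift factor $S=V^T_{l+1:j}V_{1:j-l}$ is nilpotent, that submultiplicativity gives nothing, and that ``this particular compression preserves the lower bound on $\sigma_{\min}$'' needs verification --- and then you leave that verification as ``the delicate part.'' As stated, your proposed resolution is not even true in general: a compression $R_2 M R_1^T$ between different coordinate subspaces can have arbitrarily small (even zero) smallest singular value regardless of $\sigma_{\min}(M)$, so some structural input specific to this situation is indispensable, and you never name it.

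The missing ingredient in the paper's argument is an eigenvalue interlacing step. One forms the Gram matrix of $M := P_l(H_{j-l,j-l})\,V^T_{1:j-l}V_{l+1:j}$, uses orthonormality of the Lanczos vectors to see that $V^T_{1:j-l}V_{l+1:j}V^T_{l+1:j}V_{1:j-l}$ is the diagonal $0/1$ projector onto the last $j-2l$ coordinates, so that the nonzero spectrum of $MM^T$ coincides with that of the principal submatrix $\bigl(P_l(H_{j-l,j-l})^2\bigr)_{l+1:j}$, and then invokes the Courant--Fischer (Cauchy interlacing) theorem to get $\lambda_{\min}\bigl((P_l(H)^2)_{l+1:j}\bigr)\geq\lambda_{\min}\bigl(P_l(H)^2\bigr)$, whence $\sigma_{\min}\geq\min_k|P_l(\theta^{j-l}_k)|$. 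Without an argument of this kind (or an equivalent one on your proposed $R_2\,P_l(H_{j,j})\,R_1^T$ realization), the inequality \eqref{eq:ineq1} is not established. One point in your favor: your unease about the nilpotent shift is well founded, since the right-hand side of \eqref{eq:G_sub} is literally singular while $G_{l+1:j}$ is upper triangular with nonzero diagonal; the paper sidesteps this by declaring $\sigma_{\min}$ and $\lambda_{\min}$ to mean the minimal \emph{nonzero} singular value and eigenvalue, which is a convention you would also need to adopt to make the chain of equalities $\|M^{-1}\|=\sigma_{\min}(M)^{-1}=\lambda_{\min}(MM^T)^{-1/2}$ meaningful here.
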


\begin{proof}
The inequality \eqref{eq:ineq1} can be proven as follows:
\begin{align*} 
	\| G^{-1}_{l+1:j} \|_{\max} &\stackrel{\eqref{eq:G_sub}}{=} \|\left(P_l(H_{j-l,j-l}) \, V^T_{1:j-l} \, V_{l+1:j} \right)^{-1} \|_{\max} 
														\leq \|\left(P_l(H_{j-l,j-l}) \, V^T_{1:j-l} \, V_{l+1:j} \right)^{-1} \| \\
														&= \left( \sigma_{\min}\left( P_l(H_{j-l,j-l}) \, V^T_{1:j-l} \, V_{l+1:j} \right) \right)^{-1} \\
														&= \left( \sqrt{ \lambda_{\min}\left( P_l(H_{j-l,j-l}) \, V^T_{1:j-l} \, V_{l+1:j} \, V^T_{l+1:j} \, V_{1:j-l} \, P_l(H_{j-l,j-l}) \right) } \right)^{-1} \\
														&= \left( \sqrt{ \lambda_{\min}\left( (P_l(H_{j-l,j-l})^2)_{l+1:j} \right) } \right)^{-1} 
														\leq \left( \sqrt{ \lambda_{\min}\left( P_l(H_{j-l,j-l})^2 \right) } \right)^{-1} \\
														&= \left( \left| \lambda_{\min}( \, P_l(H_{j-l,j-l}) \, ) \right| \right)^{-1} = \| \left(P_l(H_{j-l,j-l})\right)^{-1} \| = \left( \min_{1 \leq k \leq j-l} \left| P_l(\theta^{j-l}_{k}) \right| \right)^{-1}.
\end{align*}
Here $\sigma_{\min}(\cdot)$ and $\lambda_{\min}(\cdot)$ denote respectively the minimal non-zero singular value and eigenvalue of the given matrix, and $(P_l(H_{j-l,j-l})^2)_{l+1:j}$ is the principal submatrix of the squared matrix polynomial $P_l(H_{j-l,j-l})^2$ obtained by removing the latter's first $l$ rows and columns. The last inequality in the derivation above follows from the Courant-Fischer minimax principle, see e.g.~\cite{horn1990matrix}. 

The inequality \eqref{eq:ineq2} follows directly by setting the shifts $\sigma_j$ $(j=0,\ldots, l-1)$ to zero:
\begin{equation*}
  % \| G^{-1}_{l+1:j} \|_{\max} \leq \| (H_{j-l,j-l}^{l})^{-1} \|_{\max} \leq \| (H_{j-l,j-l}^{-1})^{l} \|_{2} = \left(|\lambda_{\max}(H_{j-l,j-l}^{-1})|\right)^{l},
  \| G^{-1}_{l+1:j} \|_{\max} \leq \| (H_{j-l,j-l}^{l})^{-1} \|_{2} = \| (H_{j-l,j-l}^{-1})^{l} \|_{2} = \left(|\lambda_{\max}(H_{j-l,j-l}^{-1})|\right)^{l},
\end{equation*}
which completes the proof.
\end{proof}

The derivation of the upper bounds \eqref{eq:ineq1}-\eqref{eq:ineq2} in Lemma \ref{lemma:lemma_bound} has been restricted to the principal submatrix $G_{l+1:j}$ to avoid notational difficulties due to the polynomial definition in the first $l$ iterations, see expression \eqref{eq:z_ex}, but can be generalized to the entire matrix $G_{j}$.
%However, the bounds easily extend to the entire matrix $G_{j}$, i.e. 
%\begin{equation} \label{eq:bound_Ginv}
	%\| \,G^{-1}_j \, \|_{\max} \leq \left( \min_{1 \leq k \leq j} \left| P_l(\theta^{j}_{k}) \right| \right)^{-1},
%\end{equation}
%and in case the monomial basis is used it holds that
%\begin{equation} \label{eq:bound_Ginv2}
	%\| \,G^{-1}_j \, \|_{\max} \leq \left(|\lambda_{\max}(H_{j,j}^{-1})|\right)^{l}.
%\end{equation}

\subsection{Interpreting the bounds% on $\|{G}^{-1}_j\|_{\max}$
} \label{sec:interpreting}

We now provide the reader with additional insights on how to interpret the bounds derived in Section \ref{sec:bounds} in practice. We first sketch a heuristic argumentation based on Lemma \ref{lemma:lemma_0}, inequality \eqref{eq:est2} to indicate that the maximum norm of $\bar{G}^{-1}_j$ may be much larger than one, possibly leading to the amplification of local rounding errors in p($\ell$)-CG.

Note that the matrix $G_j$ is not necessarily diagonally dominant. Indeed, since the norms $\|z_k\|$ are not guaranteed to be monotonically decreasing in p($\ell$)-CG, it may occur that $|g_{m,k}| \geq |g_{m,m}|$ for some $0 \leq m < k \leq i$. Consequently, if the bound in \eqref{eq:est} is tight, expression \eqref{eq:expan} suggests that when $\|P_l(A)\|$ is large, $\|G^{-1}_j\|_{\max}$ may be (much) larger than one.  Assuming that this heuristic argumentation may be extended to the finite precision framework, $\|\bar{G}^{-1}_{j}\|_{\max}$ may be large -- depending on the spectral properties of $P_l(A)$ -- and the corresponding local rounding errors in the expressions \eqref{eq:gap_plcg2}-\eqref{eq:gap_plcg} are possibly amplified throughout the algorithm. This is illustrated by the numerical experiments in Section \ref{sec:numerical}, see Fig.~\ref{fig:figure1}-\ref{fig:figure3}. 
	
\begin{remark} \label{remark:shifts}
	\underline{Influence of the Krylov basis choice (Lemma \ref{lemma:lemma_0}).} If the stabilizing shifts $\sigma_i$ ($0 \leq i < l$) are chosen as roots of the degree $l$ Chebyshev polynomial in the Newton basis as introduced for the p($\ell$)-CG algorithm in \cite{ghysels2013hiding,cornelis2017communication}, they effectively minimize the 2-norm of $P_l(A)$, see \cite{greenbaum1994gmres,faber2010chebyshev}. Expressions \eqref{eq:expan}-\eqref{eq:est} suggest that this choice of shifts may indeed reduce the effect of local rounding error propagation. This is illustrated by the numerical experiments in Section \ref{sec:numerical}, Fig.~\ref{fig:figure1} and \ref{fig:figure3}. However, even with the `optimal' choice for the shifts, the experiments show that the amplification of local rounding errors can not be precluded entirely, in particular when the pipeline depth $l$ is large. 
\end{remark}

%\begin{remark}
	Note that the bound $\|\bar{G}_j\|_{\max} \leq \|P_l(A)\|$ from \eqref{eq:est} may not hold in finite precision arithmetic due to loss of basis orthogonality, which also adds to the amplification of local rounding errors. We refer to the literature \cite{greenbaum1989behavior,greenbaum1992predicting,strakovs2002error} for more information on this topic. Due to this effect, the finite precision propagation matrix $\|\bar{G}^{-1}_j\|_{\max}$ may even be significantly larger than $\|G^{-1}_j\|_{\max}$ in practice. A profound study of the loss of orthogonality in pipelined Krylov subspace methods is however beyond the scope of this work.
%\end{remark}

%Assuming that the above derivations extend to the finite precision framework, 
The following intuitive insights on the effects of the pipeline depth and the iteration on rounding error behavior can be derived from Lemma \ref{lemma:lemma_bound} and, notably, the bounds %\eqref{eq:bound_Ginv} and \eqref{eq:bound_Ginv2}.
\eqref{eq:ineq1}-\eqref{eq:ineq2}.

%\textbf{Influence of the choice of the shifts $\sigma_i$.} The (optimal) leja-ordered Chebyshev shifts $\sigma_i$ are chosen such that $\|P_l(A)\|_2 = \max_{1\leq k \leq n} |P_l(\lambda_k)|$ is minimal over all monic polynomials of degree $l$, or, given that the Ritz values in iteration $j$ are good approximations to the exact eigenvalues of $A$, thus $\min_{1 \leq k \leq j} \left| P_l(\theta^{j}_{k}) \right|$ is maximal. This implies that for any other sub-optimal choice for the Krylov basis it holds that 
%\[
%b_j \geq b_j^\text{leja},
%\] 
%and the norm $\| G^{-1}_j \|_{\max}$ can thus be expected to be larger, leading to a more dramatic amplification of local rounding errors. This is illustrated in Section \ref{sec:numerical}, Fig.~\ref{fig:figure1} and \ref{fig:figure3}.

\begin{remark} \label{remark:depth}
\underline{Influence of the pipeline depth (Lemma \ref{lemma:lemma_bound}).} Let the iteration index $j > l$ be fixed and let us without loss of generality consider the monomial basis for the Krylov subspace %for the sake of simplicity 
in this argumentation. Suppose that for p($\ell$)-CG with pipeline length $l = 1$ it holds that $\|G^{-1}_{l+1:j}\|_{\max} > 1$, indicating that local rounding error amplification occurs in (or before) iteration $j$. From expression \eqref{eq:ineq2} it then follows that $|\lambda_{\max}(H_{j-l,j-l}^{-1})| > 1$. Consequently, for p($\ell$)-CG methods with pipeline lengths $l > 1$ it holds that
%\begin{equation*}
  $(|\lambda_{\max}(H_{j-l,j-l}^{-1})|)^l > |\lambda_{\max}(H_{j-l,j-l}^{-1})| > 1$,
%\end{equation*}
%The upper bound on $\| \,G^{-1}_{l+1:j} \, \|_{\max}$ in \eqref{eq:ineq2} increases for longer pipelines, 
signaling that the propagation of local rounding errors up to iteration $j$ may %potentially 
be even more dramatic for larger values of $l$. The results in Table \ref{tab:bounds} substantiate this premise.
\end{remark}

\begin{remark} \label{remark:index}
\underline{Influence of increasing iteration index (Lemma \ref{lemma:lemma_bound}).} Let us assume that the pipeline length $l$ is fixed and the monomial basis for the Krylov subspace is used. As the iteration index $j$ increases in the p($\ell$)-CG algorithm, the number of Ritz values $\theta^{j}_{k}$ $(1 \leq k \leq j)$ amounts, gradually approximating the spectrum of the matrix $A$. For $j_1 \leq j_2$ this implies $\min_{1\leq k \leq j_1}|\theta^{j_1}_{k}| \geq \min_{1\leq k \leq j_2}|\theta^{j_2}_{k}|$, such that
\begin{equation}
	\left(\min_{1 \leq k \leq j_1} \left(\left| \theta^{j_1}_{k} \right|^{l}\right)\right)^{-1} \leq \left(\min_{1 \leq k \leq j_2} \left( \left| \theta^{j_2}_{k} \right|^{l} \right) \right)^{-1}.
\end{equation}
This result suggests that the bound \eqref{eq:ineq1} on the norm $\| G^{-1}_{l+1:j} \|_{\max}$ is monotonically increasing with $j$, and that the impact of local rounding errors can thus be expected to become more pronounced as the iteration proceeds. This intuitive observation is supported by the numerical experiments reported in Table \ref{tab:bounds} and Fig.~\ref{fig:figure3}.
\end{remark}

\section{Countermeasures to local rounding error amplification in p($\ell$)-CG} \label{sec:countermeasures}

The analysis in Sections \ref{sec:local} and \ref{sec:further} shows that the multi-term recurrence relation \eqref{eq:vbar_rec} for the basis vector $\bar{v}_{j+1}$ forms the main cause for the amplification of local rounding errors throughout the p($\ell$)-CG algorithm. 
A straightforward countermeasure to this rounding error behavior thus consists of replacing the recurrence relation \eqref{eq:vbar_rec} by the original Lanczos recurrence for $\bar{v}_{j+1}$, i.e.
\begin{equation} \label{eq:vbar_stab}
	\bar{v}_{j+1} = ( A\bar{v}_{j} - \bar{\gamma}_j \bar{v}_j - \bar{\delta}_{j-1} \bar{v}_{j-1} ) / \bar{\delta}_j + \psi^{\bar{v}}_{j+1}, \qquad 0 \leq j < i - l,
\end{equation} 
where the local rounding error $\psi^{\bar{v}}_{j+1}$ is bounded by
\begin{equation*}
	\| \psi^{\bar{v}}_{j+1} \| \leq \left( (2+\mu\sqrt{n}) \frac{\|A\|}{|\bar{\delta}_j|} \|\bar{v}_j\| + 3 \frac{|\bar{\gamma}_j|}{|\bar{\delta}_j|} \|\bar{v}_j\| + 3 \frac{|\bar{\delta}_{j-1}|}{|\bar{\delta}_j|} \|\bar{v}_{j-1}\| \right) \epsilon.
\end{equation*} 
Using expression \eqref{eq:vbar_stab} instead of relation \eqref{eq:vbar_rec} in Algorithm \ref{algo:PIPELCG} significantly decreases the number of terms in the recurrence relation. Furthermore, it removes the explicit dependency of $\bar{v}_{j+1}$ on the auxiliary basis vector $\bar{z}_{j+1}$.
However, the recurrence relation \eqref{eq:vbar_stab} requires to compute an additional \textsc{spmv} $A\bar{v}_{j}$ in the p($\ell$)-CG algorithm, leading to an increase in computational cost. This stabilization technique bears similarity to the concept of \emph{residual replacement}, i.e.~the replacement of the recursive residual $\bar{r}_j$ by the explicit computation of $b-A\bar{x}_j$, which was suggested by several authors as a countermeasure to local rounding error propagation in various multi-term recurrence variants of CG \cite{van2000residual,carson2014residual,cools2018analyzing}. 

The alternative recurrence relation \eqref{eq:vbar_stab} for the basis vectors of $\bar{V}_j$ can be written in matrix form as
\begin{equation}
	A\bar{V}_j = \bar{V}_{j+1} \bar{H}_{j+1,j} - \Psi^{\bar{v}}_{j+1} \bar{\Delta}_{j+1,j} , \qquad 1 \leq j \leq i-l,
\end{equation}
where the local rounding errors are $\Psi^{\bar{v}}_j = [\psi^{\bar{v}}_0,\ldots,\psi^{\bar{v}}_{j-1}]$. The gap between the true basis vector $\bar{\bold{v}}_{j+1}$ and the computed basis vector $\bar{v}_{j+1}$ then reduces to
\begin{equation} \label{eq:gap_stab}
	\mathcal{F}_{j+1} = \bar{\boldsymbol{V}}_{j+1} - \bar{V}_{j+1} = - \Psi^{\bar{v}}_{j+1} %\, U_{j+1}, 
\end{equation}
%where $U_{j+1}$ is again the upper triangular matrix defined in \eqref{eq:matrixU}. 
Hence, using the recurrence \eqref{eq:vbar_stab} for $\bar{v}_{j+1}$, local rounding errors are %merely accumulated instead of 
not amplified, and the impact of local rounding errors is comparable to the original CG algorithm, see Fig.~\ref{fig:figure5}.

\begin{remark}
  The use of the recurrence relation \eqref{eq:vbar_stab} requires the computation of one additional \textsc{spmv}. Although this \textsc{spmv} may also be overlapped by the pipelined global reduction phases, the use of this recurrence relation induces a significant extra computational cost. 
	In \cite{carson2014residual} and \cite{cools2018analyzing} similar stabilizing techniques were presented for the s-step CG method and one-step pipelined CG method respectively by performing a residual replacement in only a selected number of iterations.
  However, experiments have shown that replacing the recurrence relation for $\bar{v}_{j+1}$ by \eqref{eq:vbar_stab} in a limited number of p($\ell$)-CG iterations rapidly reintroduces amplified local rounding errors, and hence is not a robust countermeasure for stabilizing the p($\ell$)-CG algorithm. When applications demand a high precision $\bar{v}_{j+1}$ can thus be computed using \eqref{eq:vbar_stab} in every iteration, however; implementing this countermeasure implies a trade-off between numerical attainable accuracy and computational cost, see Fig.~\ref{fig:figure6}.
\end{remark}

\section{Numerical experiments} \label{sec:numerical}

We consider the $200 \times 200$ uniform second order central finite difference discretization of a 2D Poisson problem on the unit domain $[0,1]^2$ for illustration purposes in this section. Note that this relatively simple problem is significantly ill-conditioned and serves as an adequate tool for demonstrating the error analysis in this work. The right-hand side of the system is $b = A\hat{x}$ with $\hat{x} = 1/\sqrt{n}$, unless explicitly stated otherwise, and the initial guess is $x_0 = 0$.

\begin{figure}
\begin{center}
\includegraphics[width=0.47\textwidth]{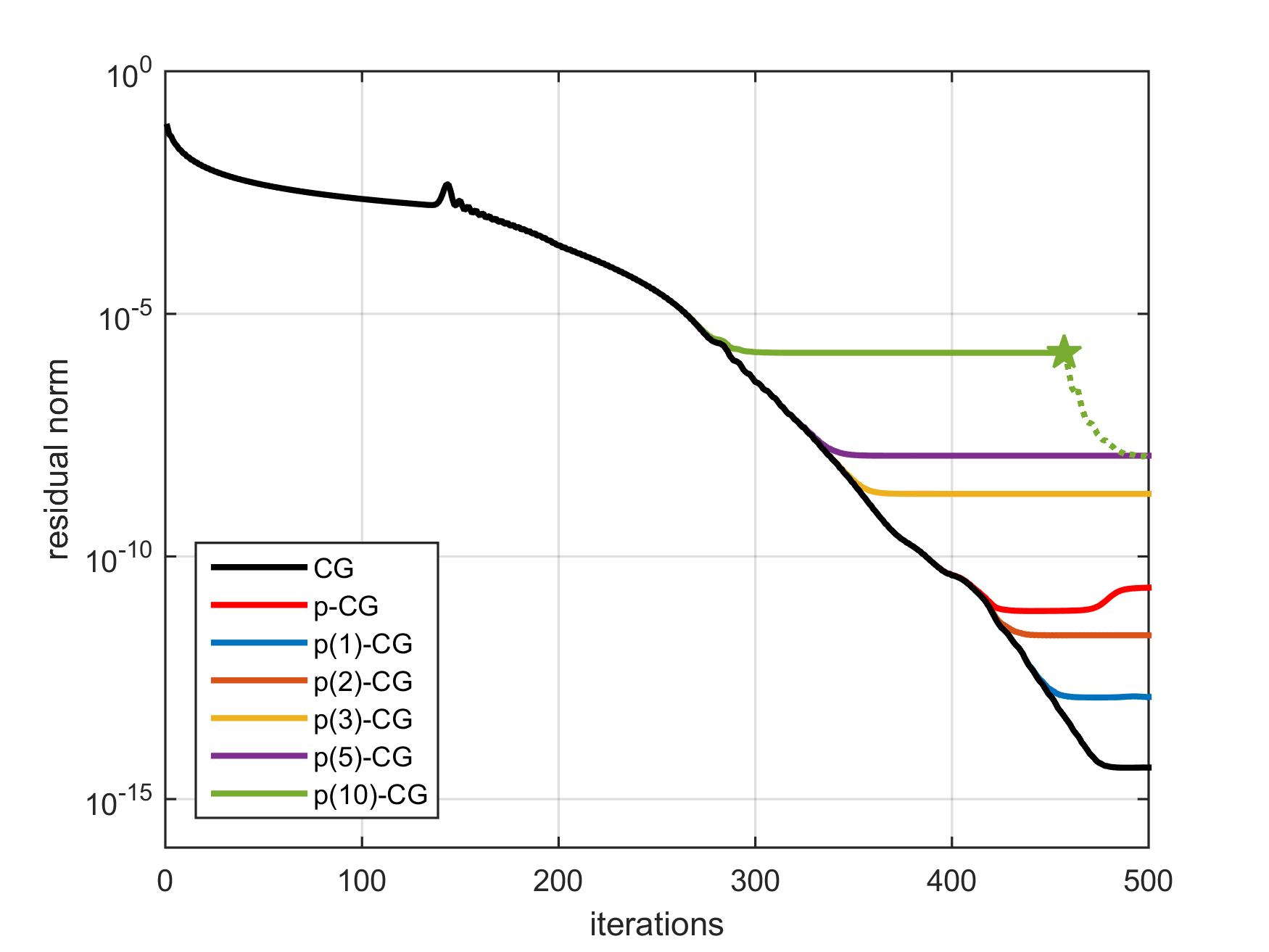} 
\includegraphics[width=0.47\textwidth]{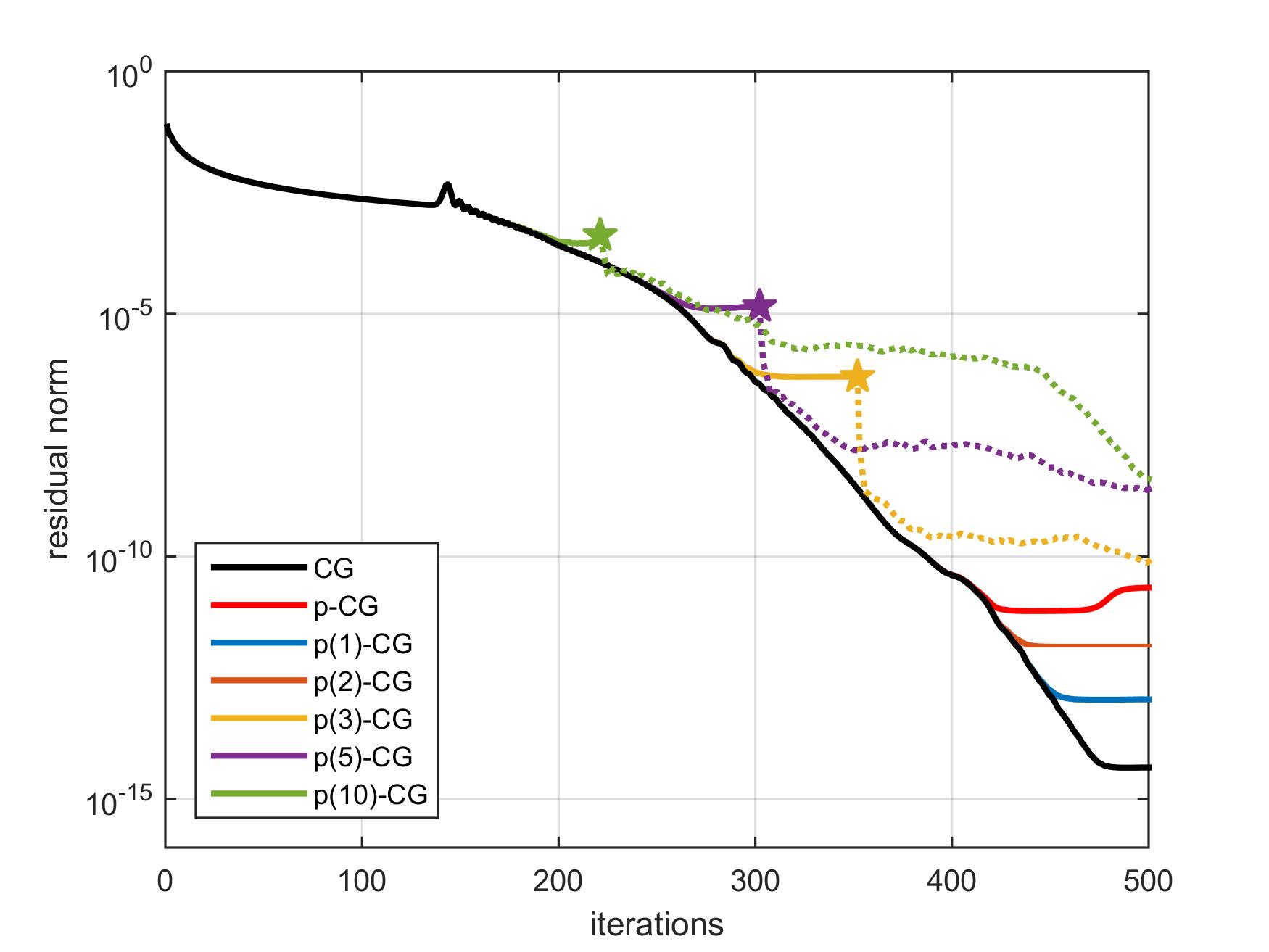}
\end{center}
\caption{Comparison of true residual norm history $\|b-A\bar{x}_j\|$ for different CG variants for a 2D Poisson problem with 200 $\times$ 200 unknowns. The stabilizing shifts $\sigma_i$ for p($\ell$)-CG are based on the degree $l$ Chebyshev polynomial on the interval [0,8] (left) (optimal shift choices) and the perturbed interval [0,8*1.005] (right) (slightly sub-optimal shifts). Square root breakdowns in p($\ell$)-CG are indicated by a $\bigstar$ symbol (followed by explicit iteration restart).}
\label{fig:figure1}
\end{figure}

Figure \ref{fig:figure1} shows the norm of the true residual $b-A\bar{x}_j$ as a function of iterations for different variants of the CG algorithm, including classic CG (Alg.~\ref{algo:CG}), pipelined CG (Alg.~\ref{algo:PIPECG}) and p($\ell$)-CG (Alg.~\ref{algo:PIPELCG}).\footnote{The explicit computation of the true residual norm $\|b-A\bar{x}_j\|$ was added to the p($\ell$)-CG algorithm for illustration purposes in the numerical experiments reported in Fig.~\ref{fig:figure1}, Fig.~\ref{fig:figure2}, Fig.~\ref{fig:figure4}, Fig.~\ref{fig:figure5} and Fig.~\ref{fig:figure7}.} The p($\ell$)-CG algorithms use Chebyshev shifts $\sigma_i$ that are defined as
\begin{equation*}
  \sigma_{i} = \frac{\lambda_{\max}+\lambda_{\min}}{2}+\frac{\lambda_{\max}-\lambda_{\min}}{2} \cos\left(\frac{(2i+1)\pi}{2l}\right), \qquad i=0,\ldots,l-1,
\end{equation*}
where all eigenvalues of $A$ are assumed to be located in the interval $[\lambda_{\min},\lambda_{\max}]$. 
We refer to the original paper on pipelined GMRES \cite{ghysels2013hiding} and the work by Hoemmen \cite{hoemmen2010communication} for additional information.
The influence of the pipeline length $l$ and the basis choice are illustrated in Fig.~\ref{fig:figure1}.

\begin{figure}
\begin{center}
\begin{tabular}{cc}
\includegraphics[width=0.47\textwidth]{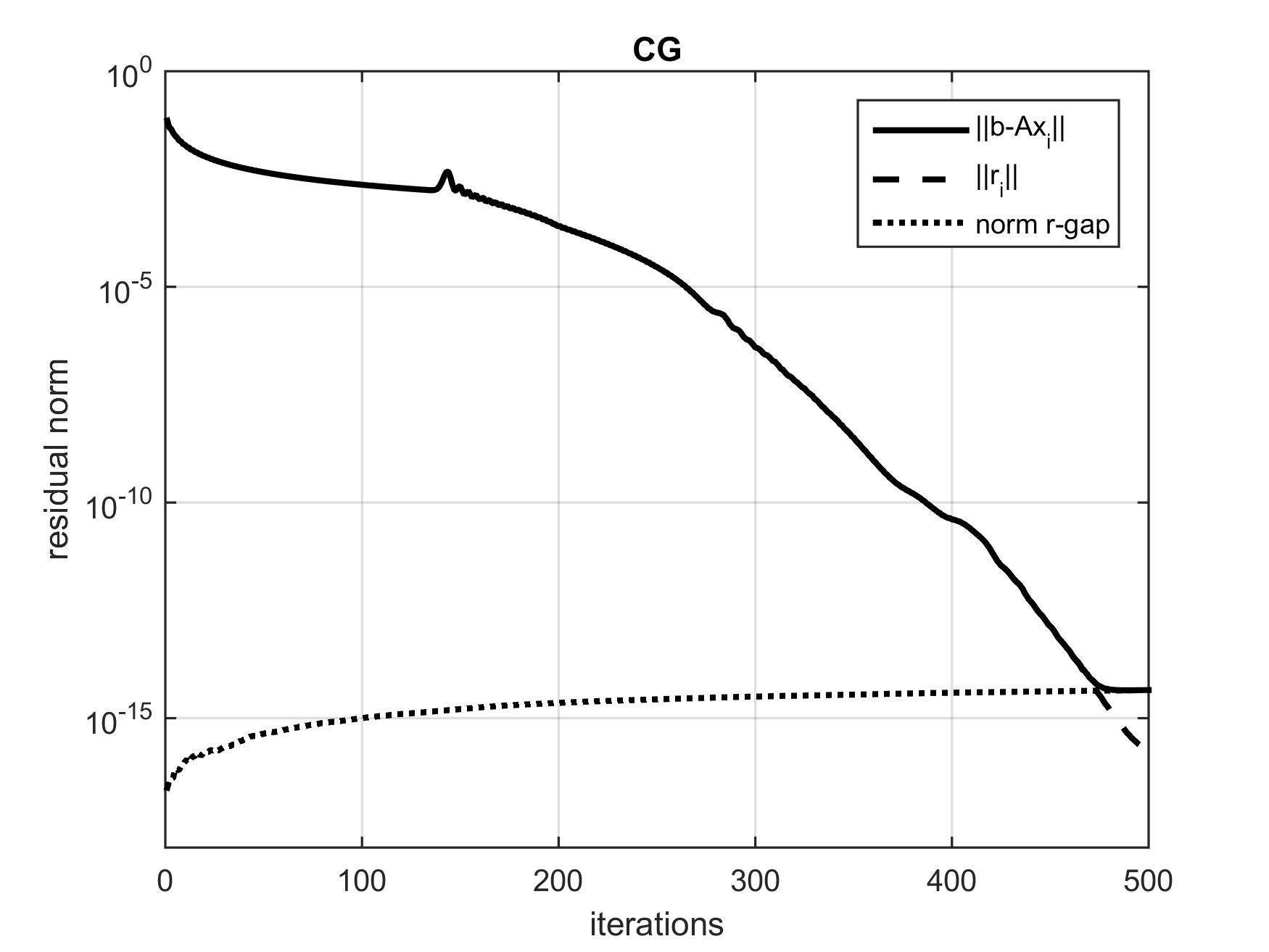} &
\includegraphics[width=0.47\textwidth]{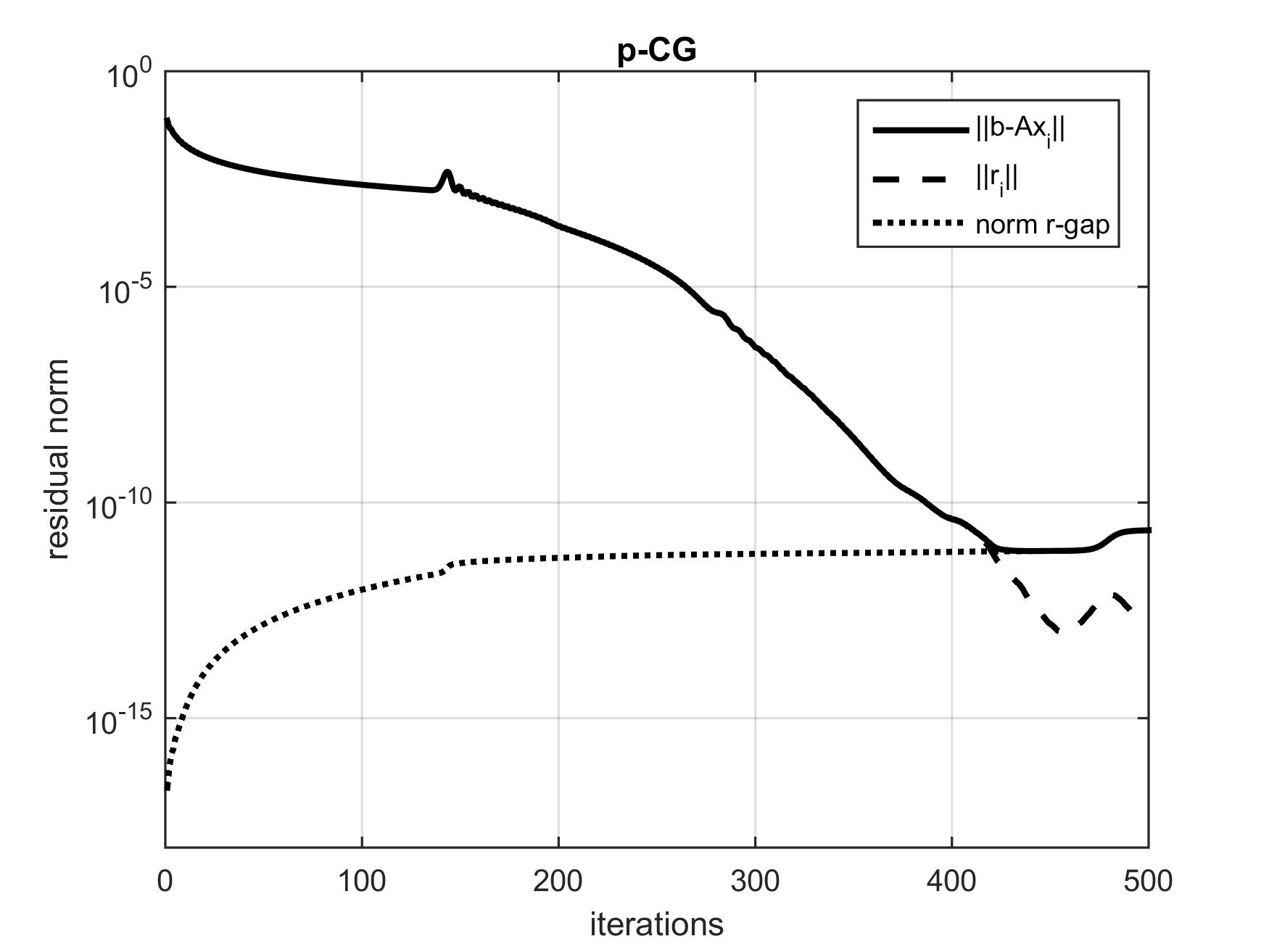} \\
\includegraphics[width=0.47\textwidth]{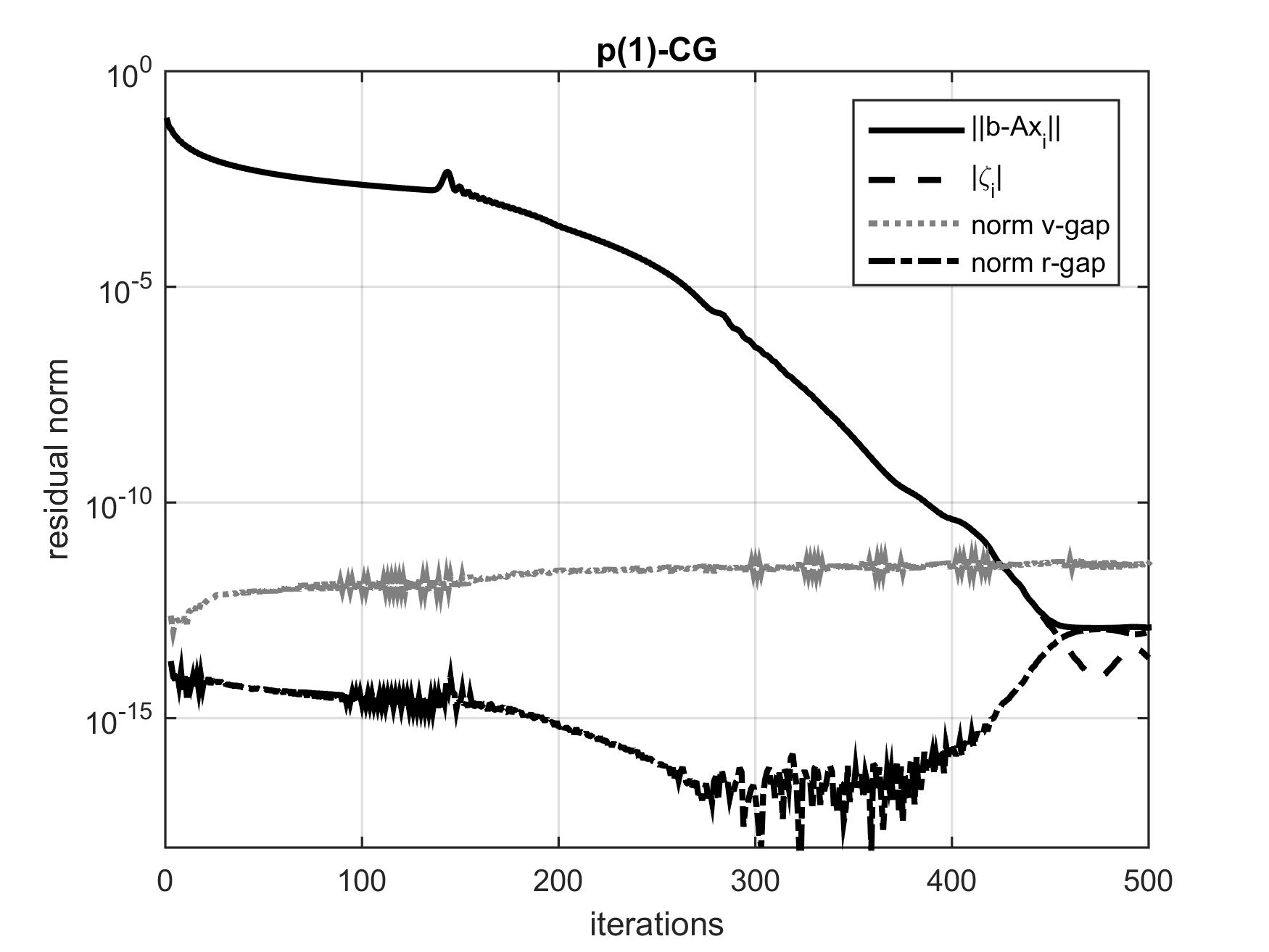} &
\includegraphics[width=0.47\textwidth]{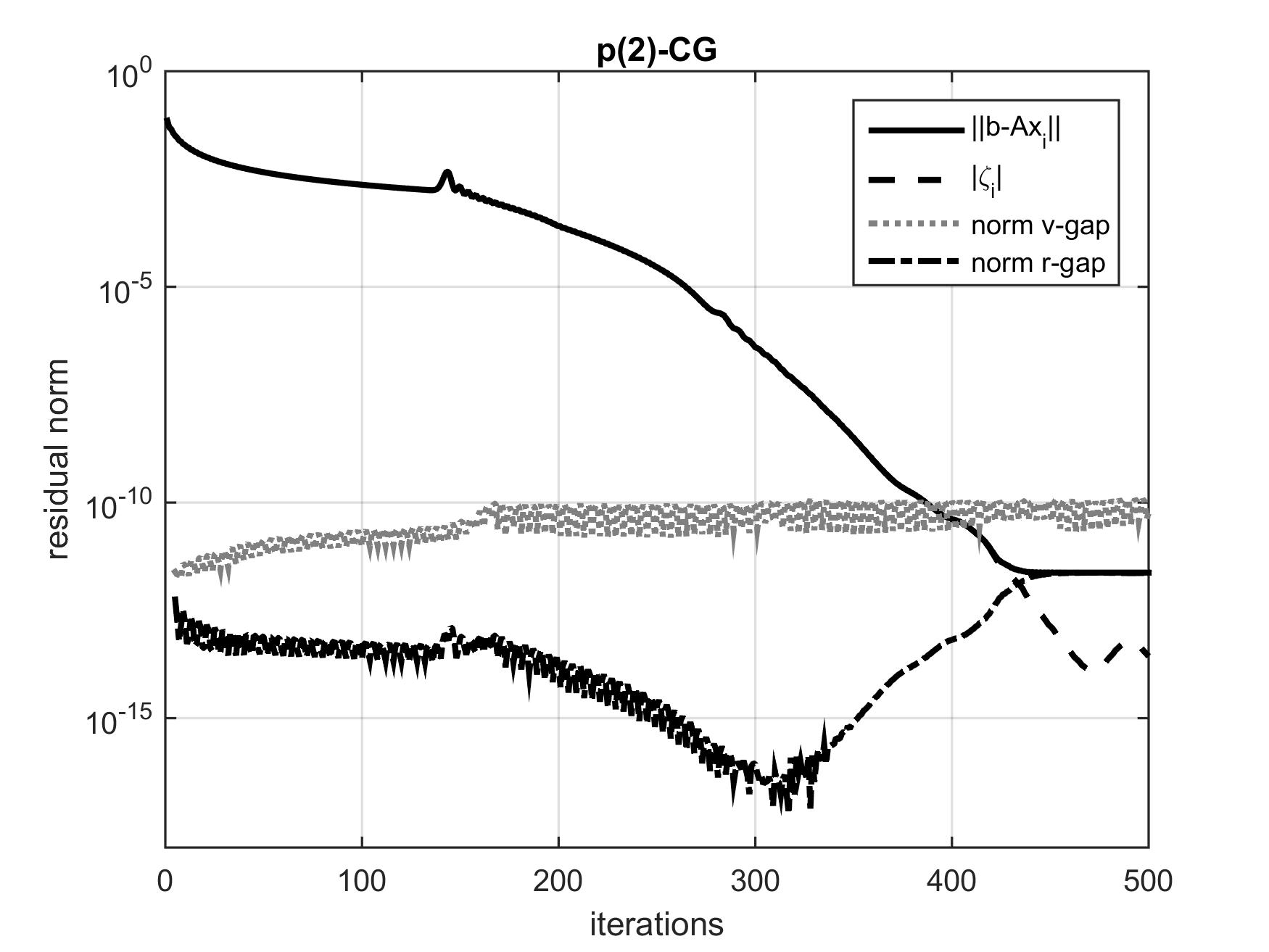} \\
\includegraphics[width=0.47\textwidth]{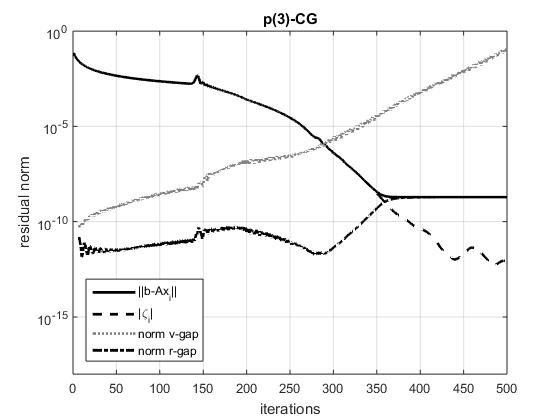} &
\includegraphics[width=0.47\textwidth]{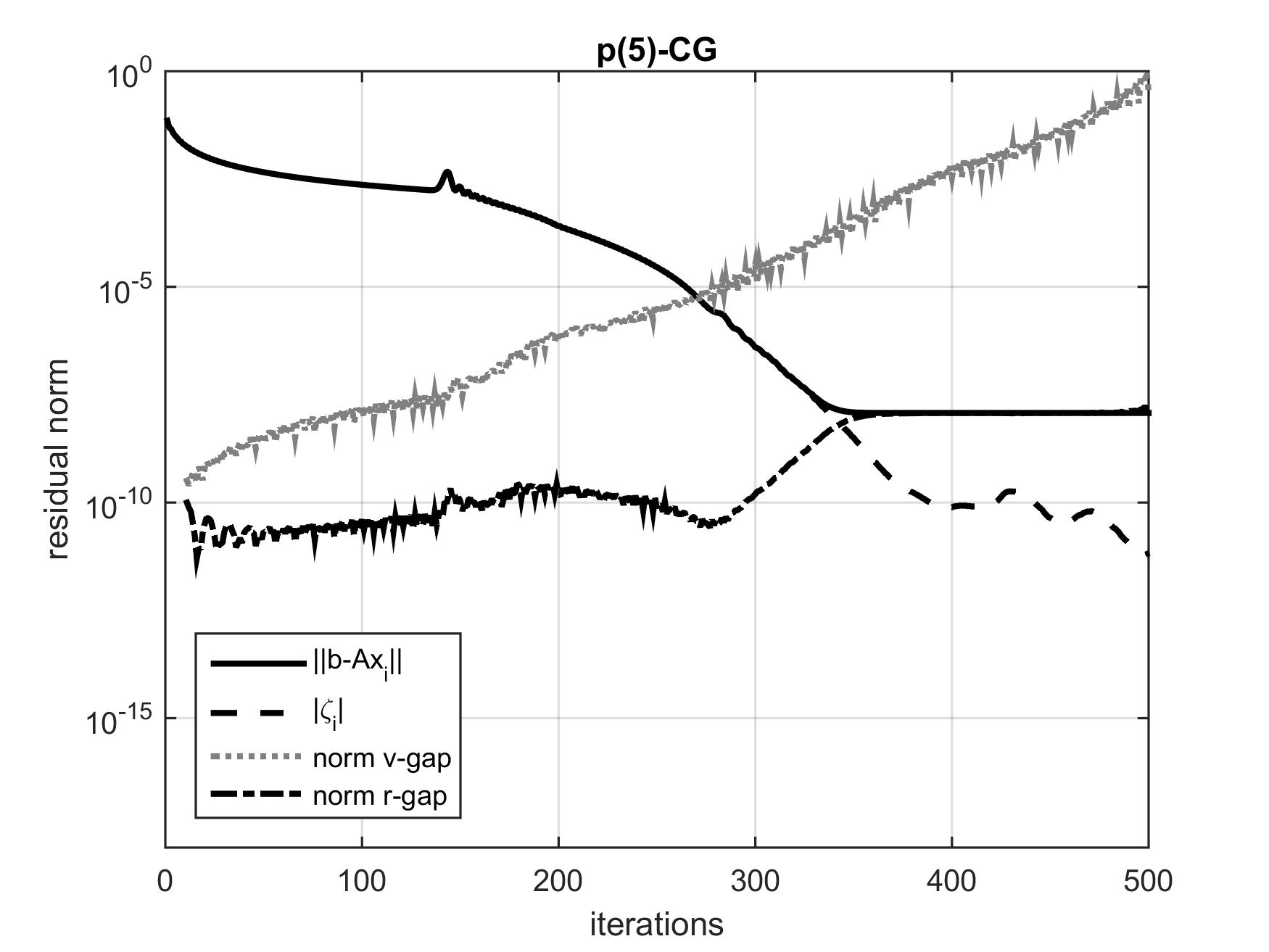} 
\end{tabular}
\end{center}
\caption{True residual norm $\|b-A\bar{x}_{j+1}\|$, computed residual norm $\|\bar{r}_{j+1}\|$, gap norm $\|f_{j+1}\| = \|\bar{\bold{v}}_{j+1} - \bar{v}_{j+1}\|$ and gap norm $\|\bar{\bold{r}}_{j+1}-\bar{r}_{j+1}\|$ for different CG variants on a 2D Poisson problem with 200 $\times$ 200 unknowns corresponding to Fig.~\ref{fig:figure1} (left). For CG and p-CG (top) the norm of the residual gap $(b-A\bar{x}_{j+1}) - \bar{r}_{j+1}$ is displayed, where $\bar{r}_{j+1}$ is computed using \eqref{eq:recs_xandr}. For p($\ell$)-CG with $l = 1,2,3,5$ (middle and bottom) the norms of the gaps $\bar{\bold{v}}_{j-l+1} - \bar{v}_{j-l+1}$ and $\bar{\bold{r}}_{j-l+1} - \bar{r}_{j-l+1}$ are shown, where $\bar{\bold{v}}_{j-l+1}$ satisfies \eqref{eq:v_arnoldi}, $\bar{v}_{j-l+1}$ is computed using the recurrence \eqref{eq:vbar_rec} and $\bar{\bold{r}}_{j-l+1} - \bar{r}_{j-l+1}$ is defined by \eqref{eq:resgap} using $\bar{\bold{v}}_{j-l+1}$ and $\bar{v}_{j-l+1}$.}
\label{fig:figure2}
\end{figure}

Figure \ref{fig:figure2} shows the norm of the true residual $b-A\bar{x}_j$ and the norm of the computed residual $\bar{r}_j$ (computed as $|\zeta_j|$ in p($\ell$)-CG, see \eqref{eq:resnorm}) for various CG variants. It also displays the norm of the gap $f_j = (b-A\bar{x}_j) - \bar{r}_j$ for CG/p-CG, and the norm of the gap $f_j = \bar{\bold{v}}_j - \bar{v}_j$ and the norm of the residual gap $\bar{\bold{r}}_{j}-\bar{r}_{j}$ for p($\ell$)-CG. For p($\ell$)-CG the gap between $\bar{\bold{v}}_j$ and $\bar{v}_j$ increases dramatically as the iteration proceeds, particularly for large values of $l$, leading to significantly reduced maximal attainable accuracy as indicated by the related residual gap norms. The following true residuals norms $\|b-A\bar{x}_j\|$ are attained after 500 iterations: $4.47\text{e-}15$ (CG), $2.28\text{e-}11$ (p-CG), 1.27\text{e-}13 (p($1$)-CG), $2.37\text{e-}12$ (p($2$)-CG), $1.94\text{e-}09$ (p($3$)-CG), $1.19\text{e-}08$ (p($5$)-CG).

\begin{table}[t]
\centering
\footnotesize
\begin{tabular}{| c || r | r | r | r | r | r | }
\hline 
	$\|\bar{G}^{-1}_{j}\|_{\max}$ &  &  &  &  &  &  \\
	$\|(P_l(\bar{H}_{j,j}))^{-1}\|$ & $l = 1$ & $l = 2$ & $l = 3$ & $l = 4$ &$l = 5$ & $l = 10$ \\
\hline \hline
	$j=10$  & 2.7e+00 & 5.8e+01 & 1.6e+02 & 3.3e+02 & 5.3e+02 & 1.0e+03 \\
	        & 8.2e+00 & 1.2e+02 & 3.2e+02 & 6.8e+02 & 1.0e+03 & 1.6e+03 \\ \hline
	$j=50$  & 2.8e+00 & 5.8e+01 & 1.6e+02 & 3.3e+02 & 5.7e+02 & 3.2e+03 \\
		      & 2.5e+01 & 2.7e+02 & 7.6e+02 & 1.5e+03 & 2.6e+03 & 1.5e+04 \\ \hline
	$j=100$ & 2.8e+00 & 5.8e+01 & 1.6e+02 & 3.3e+02 & 5.7e+02 & 3.2e+03 \\
		      & 4.2e+01 & 3.8e+02 & 1.1e+03 & 2.2e+03 & 3.7e+03 & 2.1e+04 \\ \hline
	$j=200$ & 3.2e+00 & 5.8e+01 & 1.6e+02 & 3.3e+02 & 5.7e+02 & 3.2e+03 \\
		      & 7.2e+01 & 5.4e+02 & 1.5e+03 & 3.0e+03 & 5.3e+03 & 3.0e+04 \\ \hline
	$j=400$ & 4.5e+00 & 5.8e+01 & 1.8e+02 & 3.3e+02 & 5.7e+02 & 3.6e+03 \\
	  	    & 1.3e+02 & 7.3e+02 & 2.3e+03 & 4.0e+03 & 6.9e+03 & 4.2e+04 \\
\hline
\end{tabular}
\caption{Propagation matrix maximum norm $\|\bar{G}^{-1}_{j}\|_{\max}$ (top) and the bound $\|(P_l(\bar{H}_{j,j}))^{-1}\|$  (bottom), see \eqref{eq:ineq1}, for p($\ell$)-CG solution of a 2D Poisson problem with 200 $\times$ 200 unknowns with different pipeline lengths $l$ and in various iterations $j$. Shifts $\sigma_i$ for p($\ell$)-CG are based on the degree $l$ Chebyshev polynomial on the interval [0,8]. The results displayed in the table correspond to the convergence history shown in the left panel of Fig.~\ref{fig:figure1}.}
\label{tab:bounds}
\end{table}

Table \ref{tab:bounds} compares the maximum norm of $\bar{G}^{-1}_{j}$ in p($\ell$)-CG to the theoretical upper bound $\|(P_l(\bar{H}_{j,j}))^{-1}\|$ (see Section \ref{sec:further}, Lemma \ref{lemma:lemma_bound}) corresponding to the convergence histories shown in Figure \ref{fig:figure1}. The bound is relatively tight, differing at most two orders of magnitude from $\|\bar{G}^{-1}_{j}\|_{\max}$. Note that the size of $\|\bar{G}^{-1}_{j}\|_{\max}$ and the bound $\|(P_l(\bar{H}_{j,j}))^{-1}\|$ increase with respect to both the pipeline length $l$ (see Remark \ref{remark:depth}) and the iteration index $j$ (see Remark \ref{remark:index}).

\begin{figure}
\begin{center}
\includegraphics[height=0.35\textwidth]{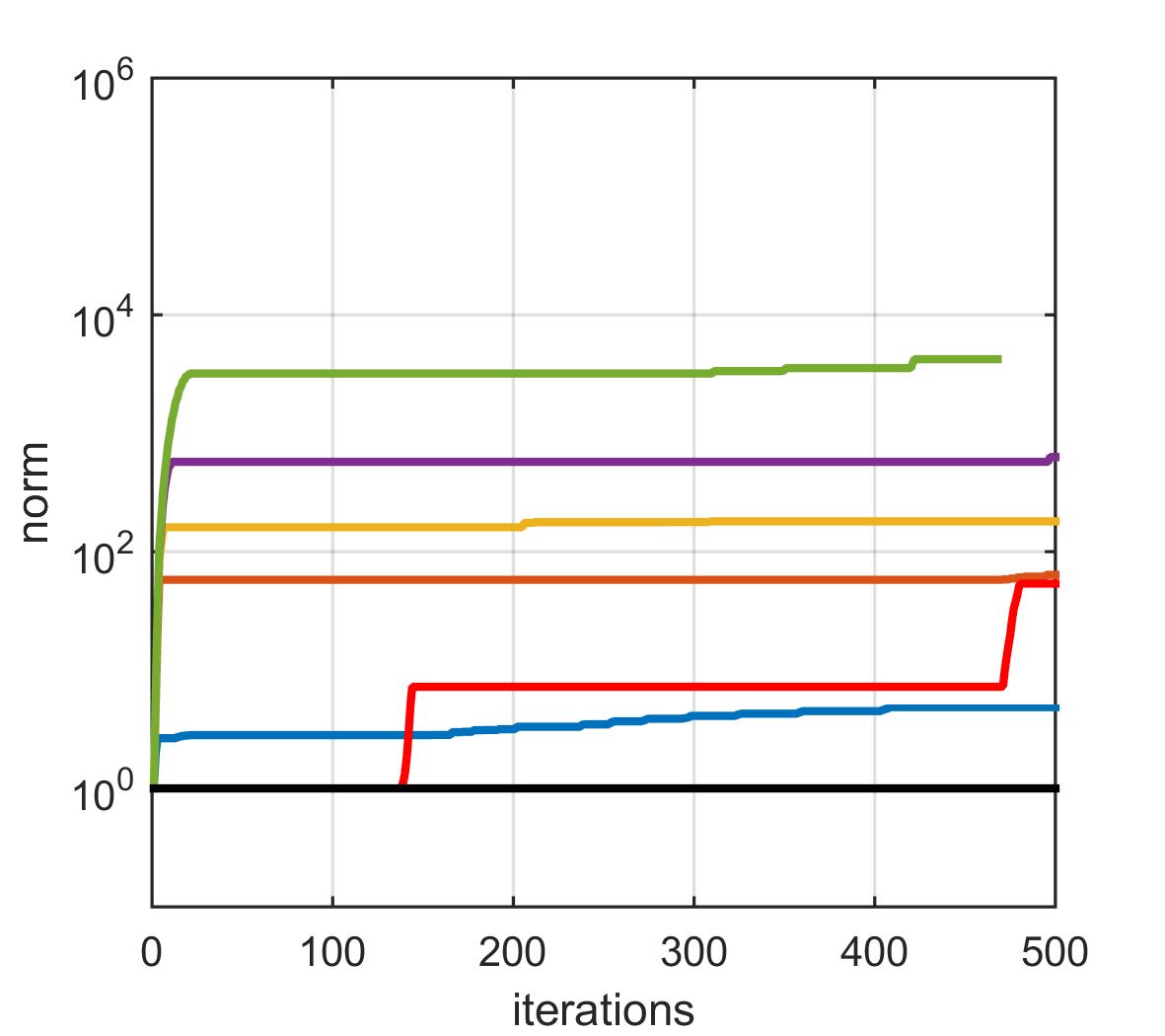} 
\includegraphics[height=0.35\textwidth]{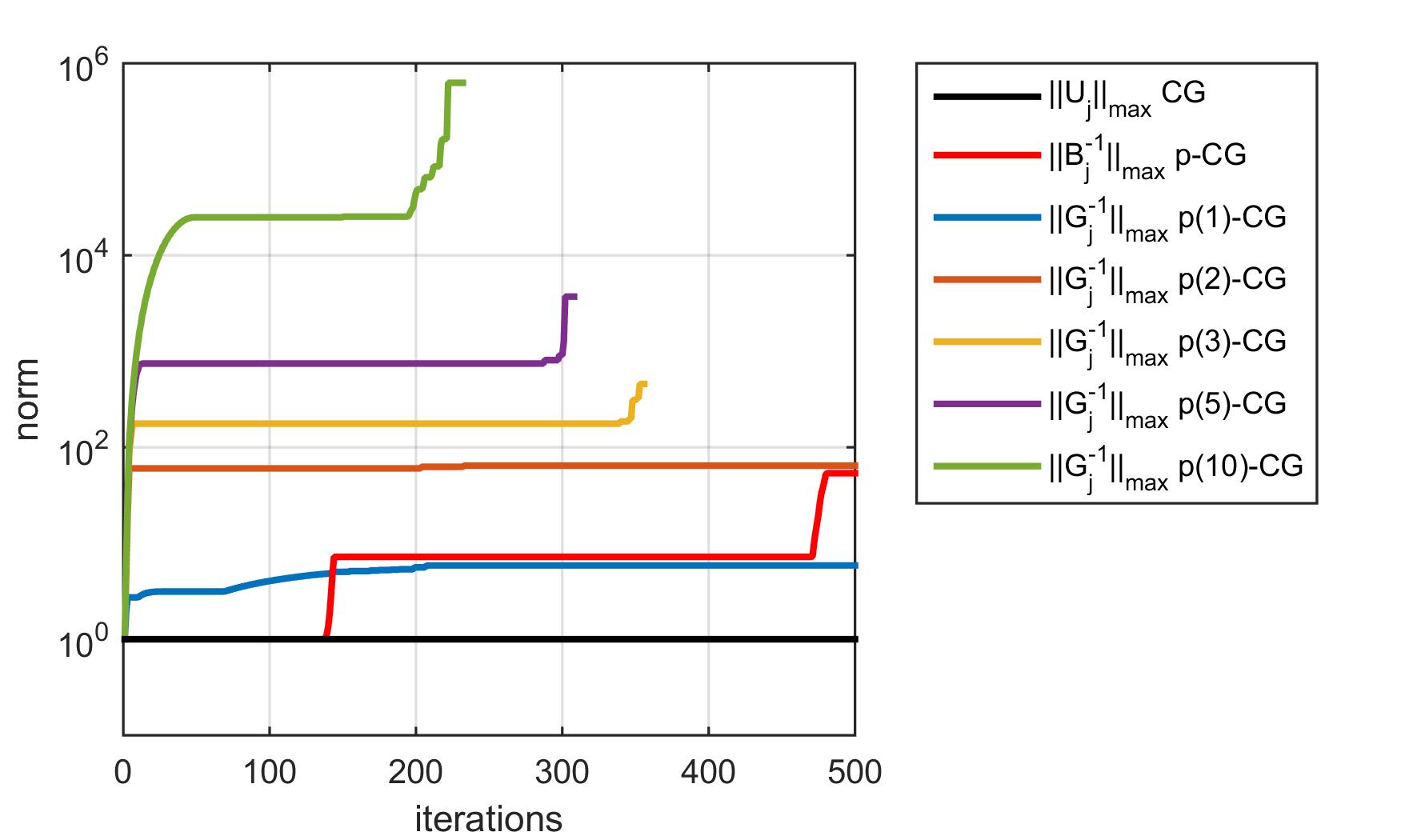} 
\end{center}
\caption{Maximum norms of the essential matrices $U_j$, $\bar{B}_j^{-1}$ and $\bar{G}_j^{-1}$ involved in the local rounding error propagation for different variants of CG. See expression \eqref{eq:f_CG_mat} for CG, \eqref{eq:total_gap_pcg} for p-CG and \eqref{eq:gap_plcg} for p($\ell$)-CG respectively. Left: with optimal Chebyshev-based shifts on the interval [0,8], cf.~Fig.~\ref{fig:figure1} (left). Right: with sub-optimal Chebyshev shifts on the interval [0,8*1.005], cf.~Fig.\ref{fig:figure1} (right).}
\label{fig:figure3}
\end{figure}

In Figure \ref{fig:figure3} the maximum norm $\|\bar{G}_j^{-1}\|_{\max}$ is shown as a function of the iteration $j$ for different pipeline lengths $l$. The maximum norms $\|U_{j}\|_{\max}$ for CG, see \eqref{eq:f_CG_mat}, and $\|\mathcal{B}^{-1}_j\|_{\max}$ for p-CG, see \eqref{eq:total_gap_pcg}, are also displayed. The impact of increasing pipeline lengths on numerical stability is clear from the figure. A comparison between the left panel (optimal shifts) and the right panel (sub-optimal shifts) in Figure \ref{fig:figure3} illustrates the influence of the basis choice on the norm of $\bar{G}_j^{-1}$, see Remark \ref{remark:shifts}. Note that no data is plotted when the matrix $\bar{G}_j$ becomes numerically singular, which corresponds to iterations in which a square root breakdown occurs due to numerical round-off, see Figure \ref{fig:figure1}.

Relating Figure \ref{fig:figure3} to the corresponding convergence histories in Figure \ref{fig:figure1} for this problem, %it follows from the local rounding error analysis in Section \ref{sec:pipelcg} that 
it is clear that the maximal attainable accuracy for p-CG is comparable to that of p(2)-CG, whereas the p(1)-CG algorithm is able to attain a better final precision. A similar observation was made in \cite{cornelis2017communication} without the explanatory numerical analysis for the given benchmark problem. The final accuracy level at which the residual stagnates degrades significantly as longer pipelines are used.

\begin{figure}
\begin{center}
\includegraphics[width=0.47\textwidth]{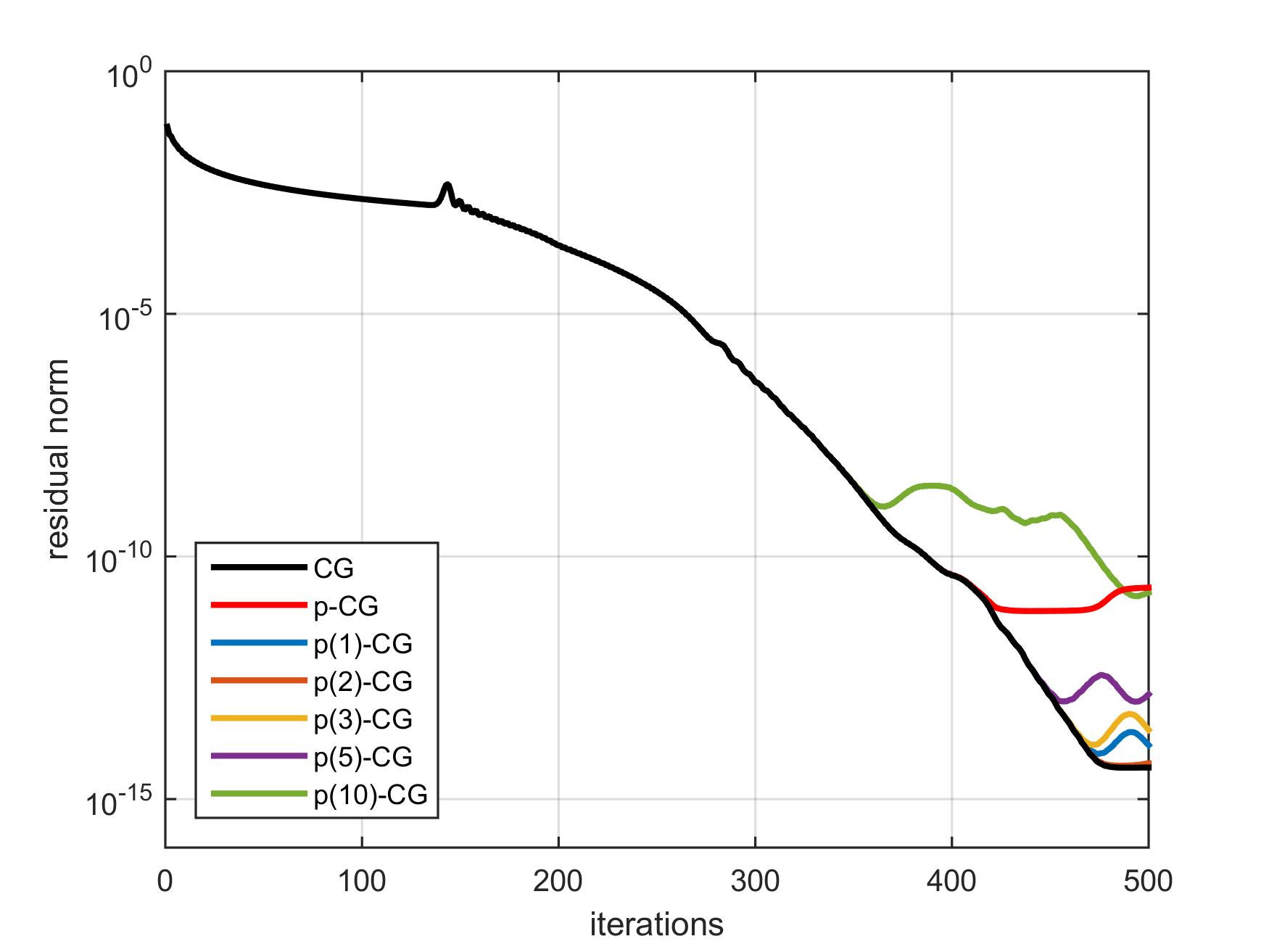}
\includegraphics[width=0.47\textwidth]{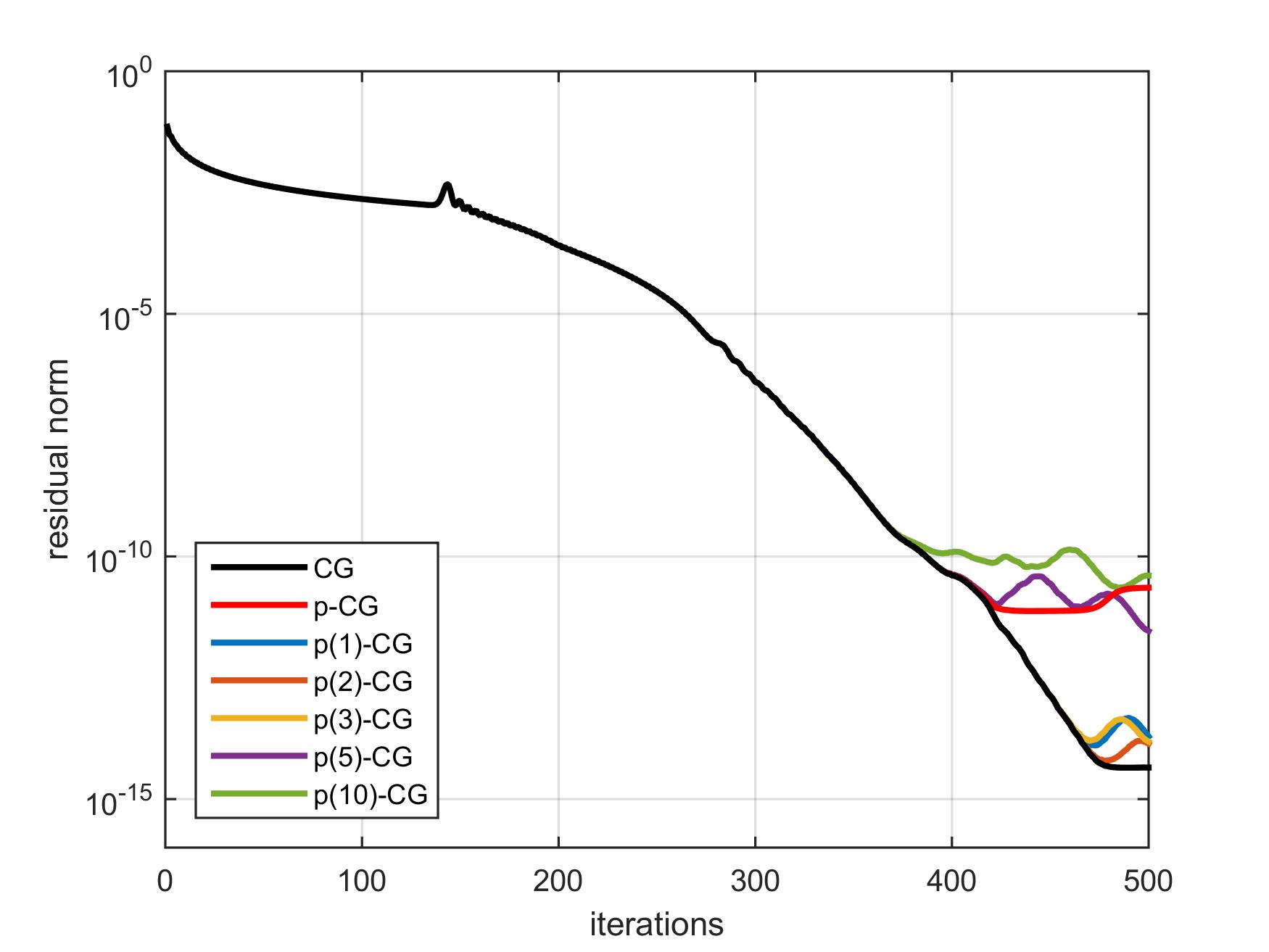}
\end{center}
\caption{Comparison of true residual norm history $\|b-A\bar{x}_j\|$ for different CG variants on a 2D Poisson problem with 200 $\times$ 200 unknowns. The p($\ell$)-CG variants use the recurrence \eqref{eq:vbar_stab} to improve numerical stability by avoiding rounding error accumulation. The stabilizing shifts $\sigma_i$ for p($\ell$)-CG are based on the degree $l$ Chebyshev polynomial on the interval [0,8] (left) and the interval [0,8*1.005] (right).}
\label{fig:figure4}
\end{figure}

\begin{figure}
\begin{center}
\begin{tabular}{cc}
\includegraphics[width=0.47\textwidth]{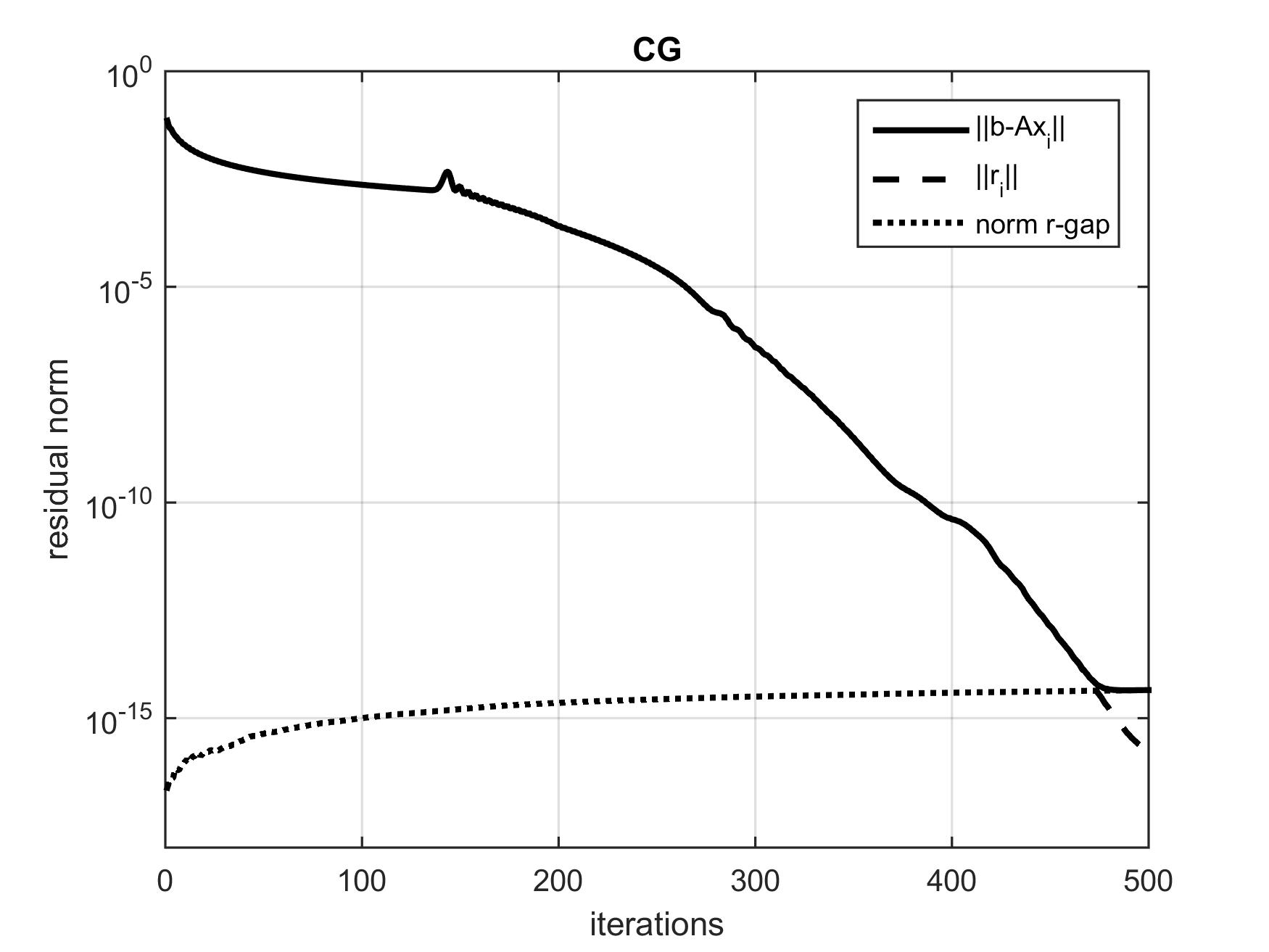} &
\includegraphics[width=0.47\textwidth]{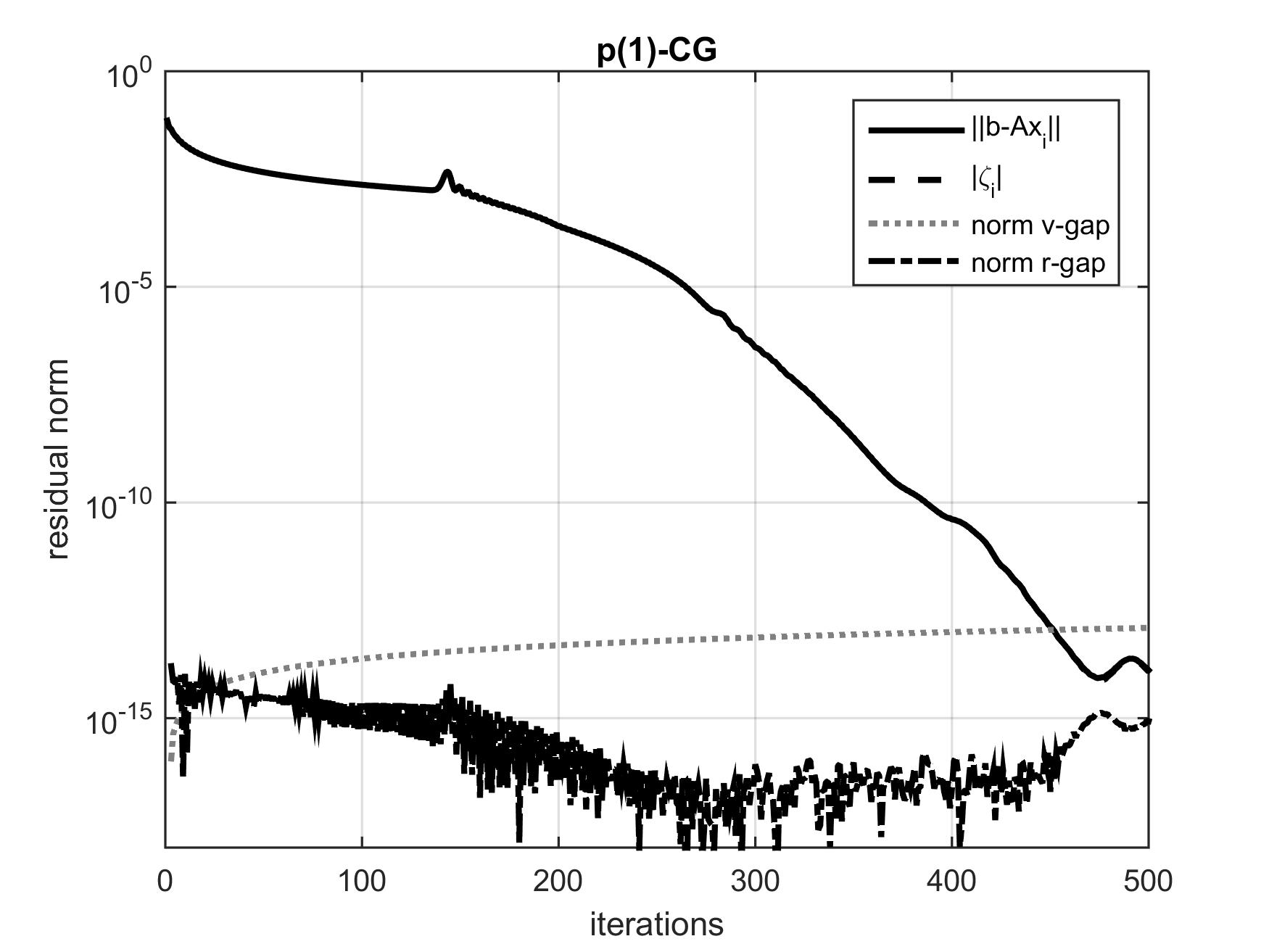} \\
\includegraphics[width=0.47\textwidth]{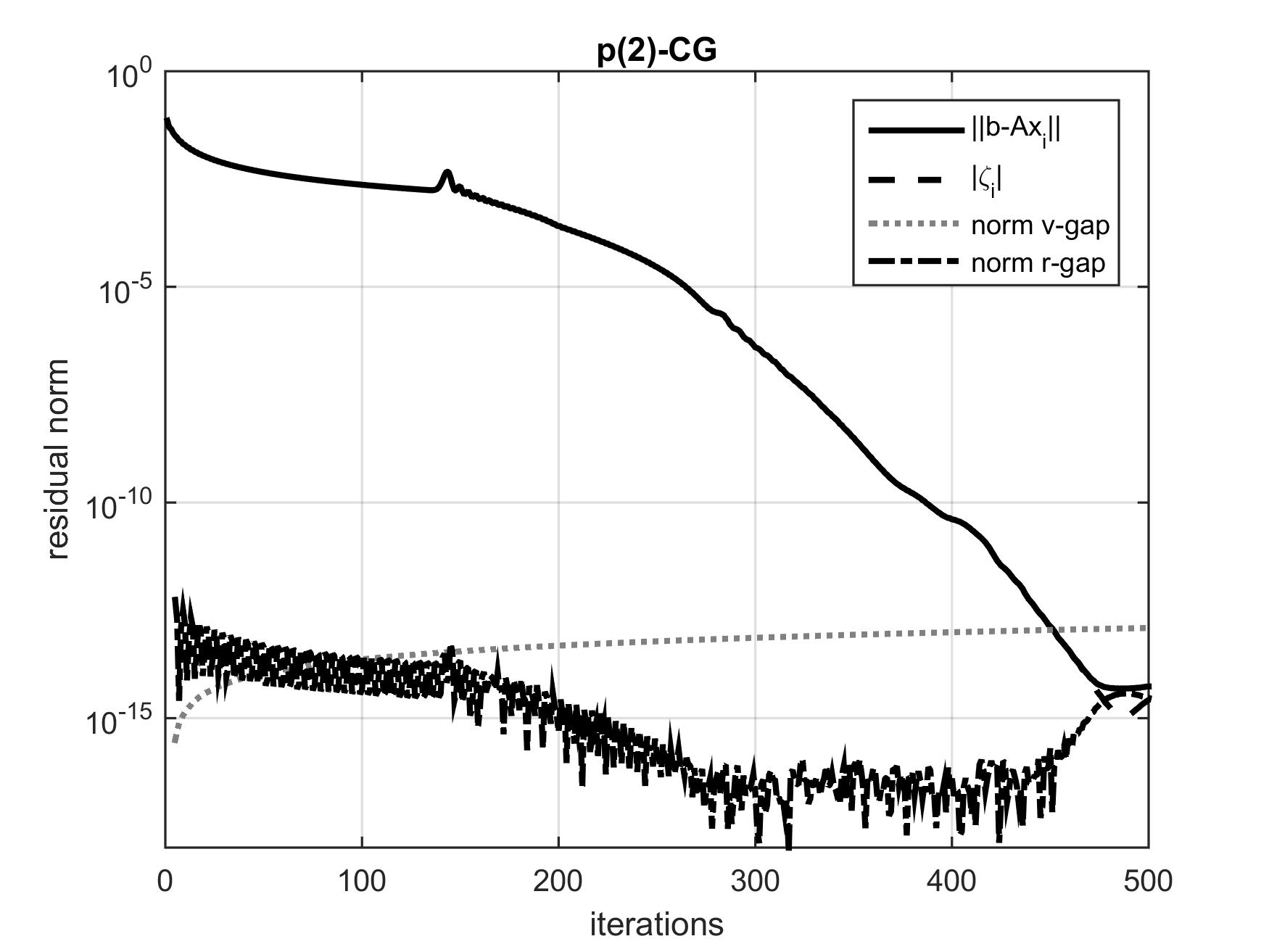} &
\includegraphics[width=0.47\textwidth]{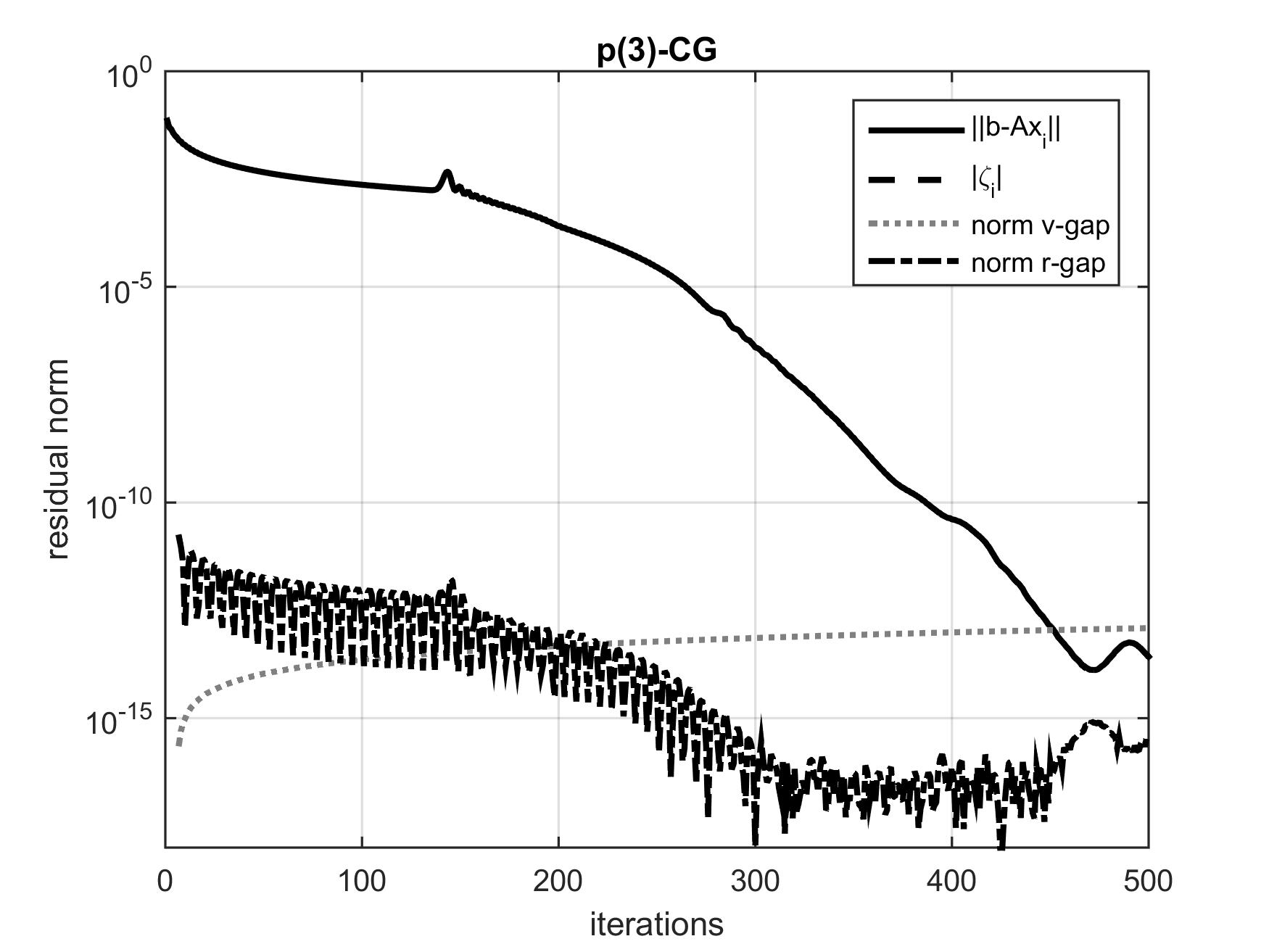} \\
\includegraphics[width=0.47\textwidth]{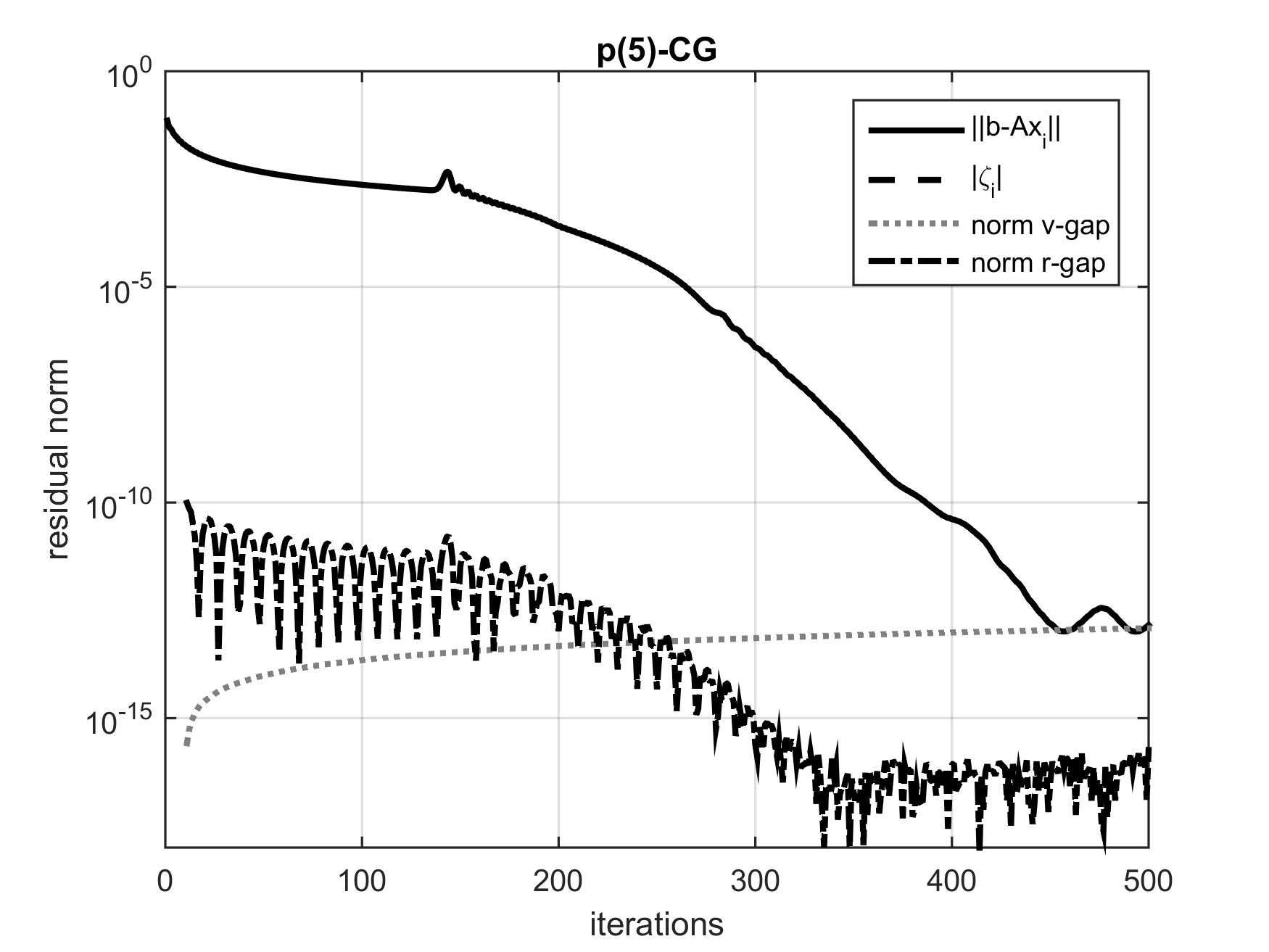} &
\includegraphics[width=0.47\textwidth]{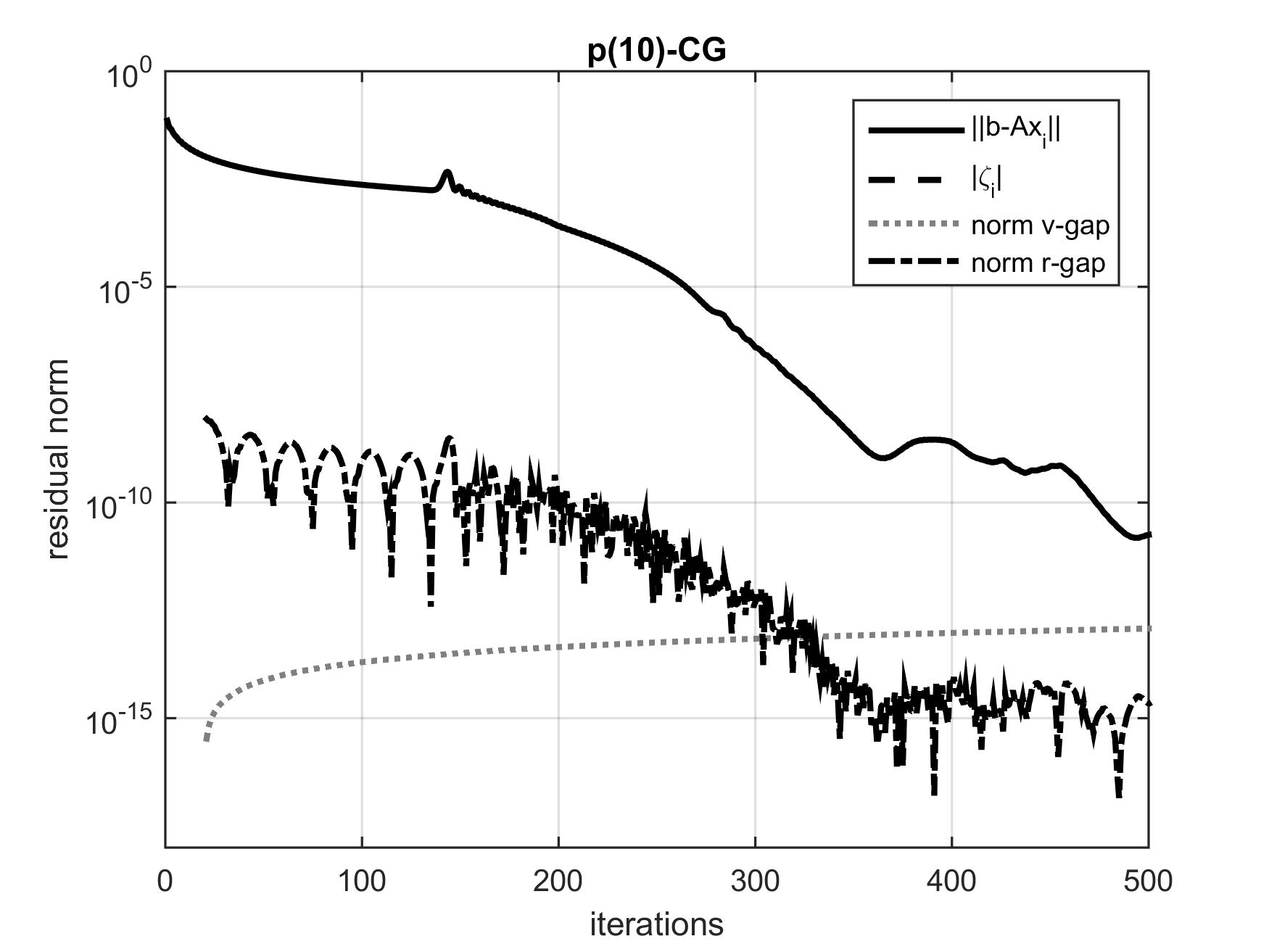} 
\end{tabular}
\end{center}
\caption{True residual norm $\|b-A\bar{x}_{j+1}\|$, computed residual norm $\|\bar{r}_{j+1}\|$, gap norm $\|f_{j+1}\| = \|\bar{\bold{v}}_{j+1} - \bar{v}_{j+1}\|$ and gap norm $\|\bar{\bold{r}}_{j+1}-\bar{r}_{j+1}\|$ for different CG variants on a 2D Poisson problem with 200 $\times$ 200 unknowns corresponding to Fig.~\ref{fig:figure4} (left). For CG (top left) the norm of the residual gap $f_{j+1} = (b-A\bar{x}_{j+1}) - \bar{r}_{j+1}$ is displayed, where $\bar{r}_{j+1}$ is computed using \eqref{eq:recs_xandr}. For p($\ell$)-CG with $l = 1,2,3,5,10$ the norms of the gaps $\bar{\bold{v}}_{j-l+1} - \bar{v}_{j-l+1}$ and $\bar{\bold{r}}_{j-l+1} - \bar{r}_{j-l+1}$ are shown, where $\bar{\bold{v}}_{j-l+1}$ satisfies \eqref{eq:v_arnoldi}, $\bar{v}_{j-l+1}$ is computed using the recurrence relation \eqref{eq:vbar_stab} for increased numerical stability,  and $\bar{\bold{r}}_{j-l+1} - \bar{r}_{j-l+1}$ is defined by \eqref{eq:resgap}. See Fig.~\ref{fig:figure2} for comparison with original p($\ell$)-CG. For p(10)-CG delayed convergence due to loss of Krylov basis orthogonality is observed.}
\label{fig:figure5}
\end{figure}

Figures \ref{fig:figure4} and \ref{fig:figure5} are the analogue of Figures \ref{fig:figure1} and \ref{fig:figure2}, where the recurrence relation \eqref{eq:vbar_stab} for $\bar{v}_{j+1}$ is used to improve the stability of the algorithm with respect to local rounding error propagation. With this variant of the algorithm the gaps between $\bar{\bold{v}}_j$ and $\bar{v}_j$ reduce to local rounding errors for any pipeline length $l$, cf.~\eqref{eq:gap_stab}, as illustrated by Figure \ref{fig:figure5}, and maximal attainable accuracy of p($\ell$)-CG is improved significantly for all values of $l$. Note that contrary to Figure \ref{fig:figure1} no square root breakdowns occur in stabilized p($\ell$)-CG for any of the choices of $l$ shown in Figure \ref{fig:figure4}.

\begin{figure}
\begin{center}
\includegraphics[width=0.47\textwidth]{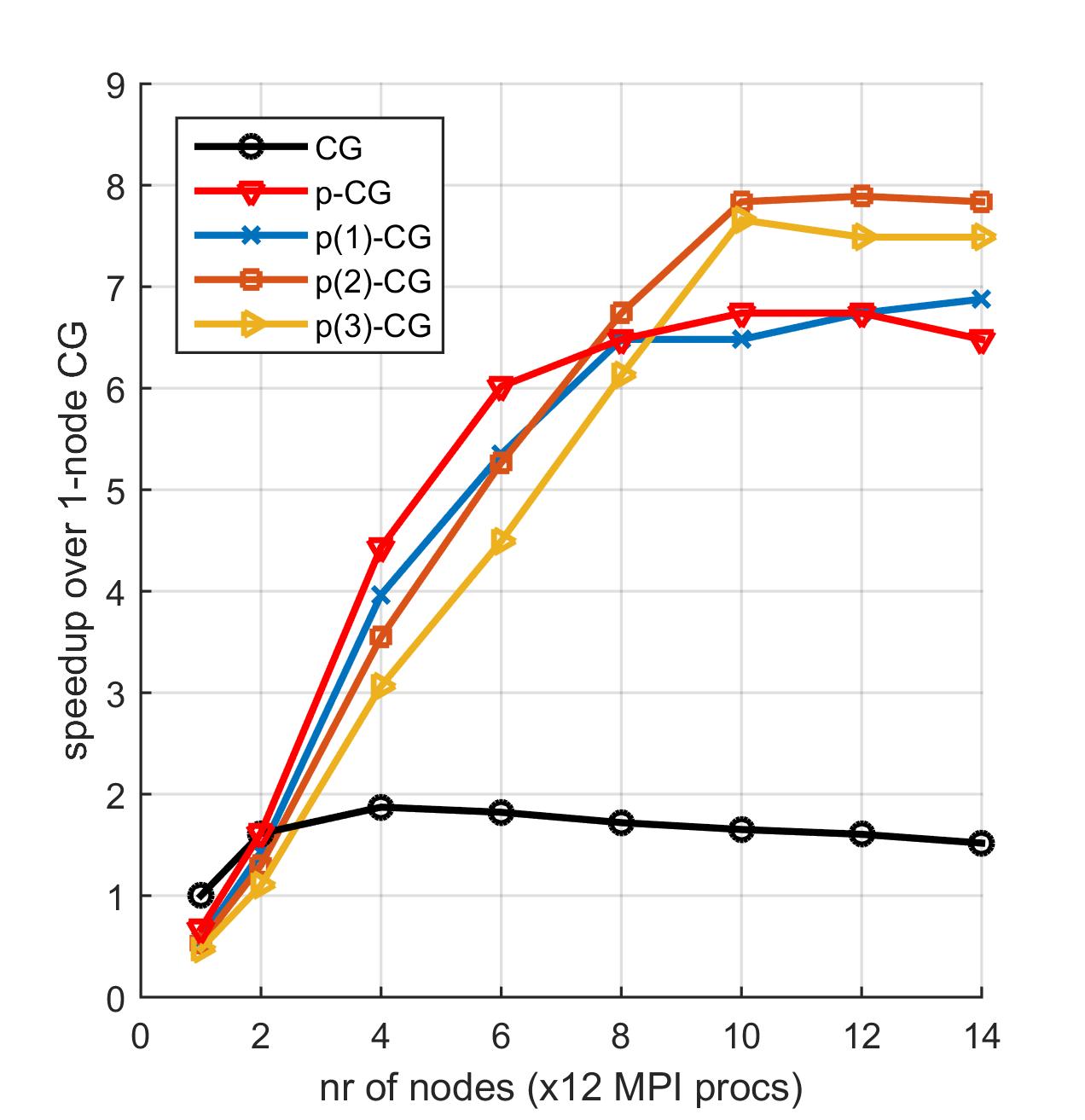}
\includegraphics[width=0.47\textwidth]{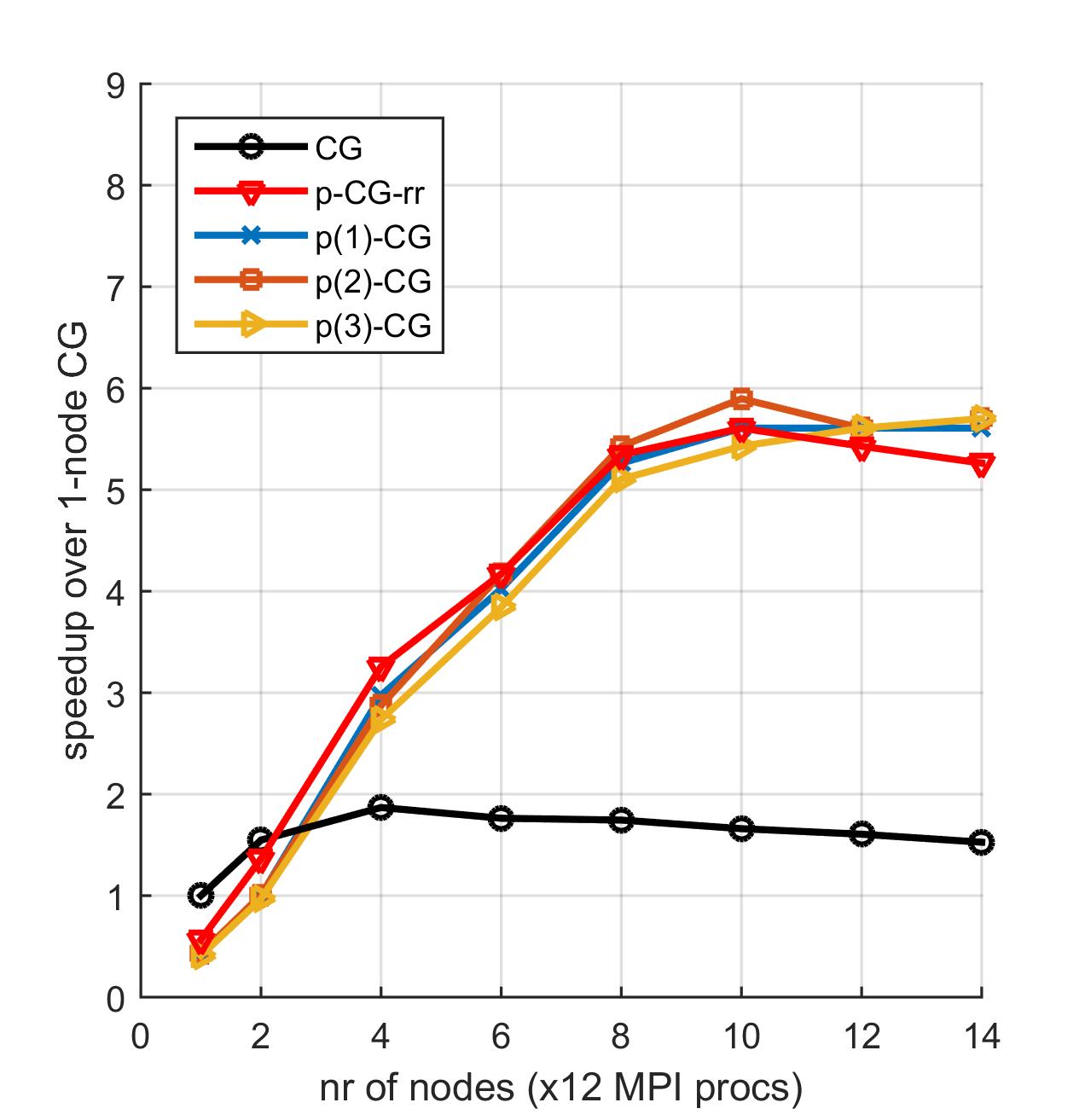}
\end{center}
\caption{Strong scaling experiment on up to 14 nodes (168 processes) for a 5-point stencil 2D Poisson problem with $750 \times 750$ unknowns. Speedup over single node classic CG for various pipelined CG variants. All method converged to $\|\bar{r}_j\|/\|b\| = 1.0\text{e-}5$ in 1019 iterations. Left: speedup of p-CG \cite{ghysels2014hiding} and p($\ell$)-CG with the standard recurrence relation \eqref{eq:vbar_rec} for $\bar{v}_{j+1}$. Right: speedup of the numerically more stable p-CG-rr algorithm (pipelined CG with automated residual replacement) \cite{cools2018analyzing} and p($\ell$)-CG with the stabilized recurrence relation \eqref{eq:vbar_stab} for $\bar{v}_{j+1}$.}
\label{fig:figure6}
\end{figure}

Figure \ref{fig:figure6} shows a strong scaling experiment on a relative small cluster with $14$ compute nodes, consisting of two $6$-core Intel Xeon X5660 Nehalem $2.80$ GHz processors each (12 cores per node). Nodes are connected by $4\,\times\,$QDR InfiniBand technology (32 Gb/s point-to-point bandwidth).
The algorithms are implemented in PETSc version 3.8.3 \cite{petsc-web-page}. Communication is performed using Intel MPI 2018.1.163 based on the MPI 3.1 standard. The PETSc environment variables 
\texttt{MPICH\_ASYNC\_PROGRESS=1} and \texttt{MPICH\_MAX\_THREAD\_SAFETY=multiple} are set to ensure optimal parallelism by allowing for non-blocking global communication.
A 2D Poisson type linear system with right-hand side $b = A\hat{x}$ with $\hat{x} = 1$ is solved. The initial guess is $\bar{x}_0 = 0$ for every method.
The benchmark problem is available as example $2$ in the PETSc KSP directory. 
The simulation domain is discretized using $750 \times 750$ grid points (562,500 unknowns). %No preconditioner is used in this experiment. 
The tolerance imposed on the scaled recursive residual norm $\|\bar{r}_j\| / \|b\|$ is $10^{-5}$. The figure reports the minimal timings over three independent runs for each variant of the CG method and MPI configuration.
 
Figure \ref{fig:figure6} compares the performance of the p($\ell$)-CG algorithm using the standard recurrence relation \eqref{eq:vbar_rec} for $\bar{v}_{j+1}$ (left panel) to the same algorithm that uses the stabilized recurrence relation \eqref{eq:vbar_stab} for $\bar{v}_{j+1}$ (right panel). Using the original recurrence relation \eqref{eq:vbar_rec} p($2$)-CG starts out-scaling p($1$)-CG and p-CG from 8 nodes onward in this experiment. Note that p($3$)-CG does not further improve scalability, indicating that the overlap is optimal for $l = 2$. The computational cost of the additional \textsc{spmv} required to compute the stabilized recurrence relation \eqref{eq:vbar_stab} clearly affects the timings. The balance between time spent in global communication and local computation is shifted more towards the computational side, which implies that in the stabilized p($\ell$)-CG algorithm the use of deeper pipelines ($l > 1$) is no longer beneficial for this test case. However, using deeper pipelines in combination with the stabilized recurrence relation \eqref{eq:vbar_stab} would likely be useful if the computational work per processor was further reduced.

\begin{figure}
\begin{center}
\includegraphics[width=0.47\textwidth]{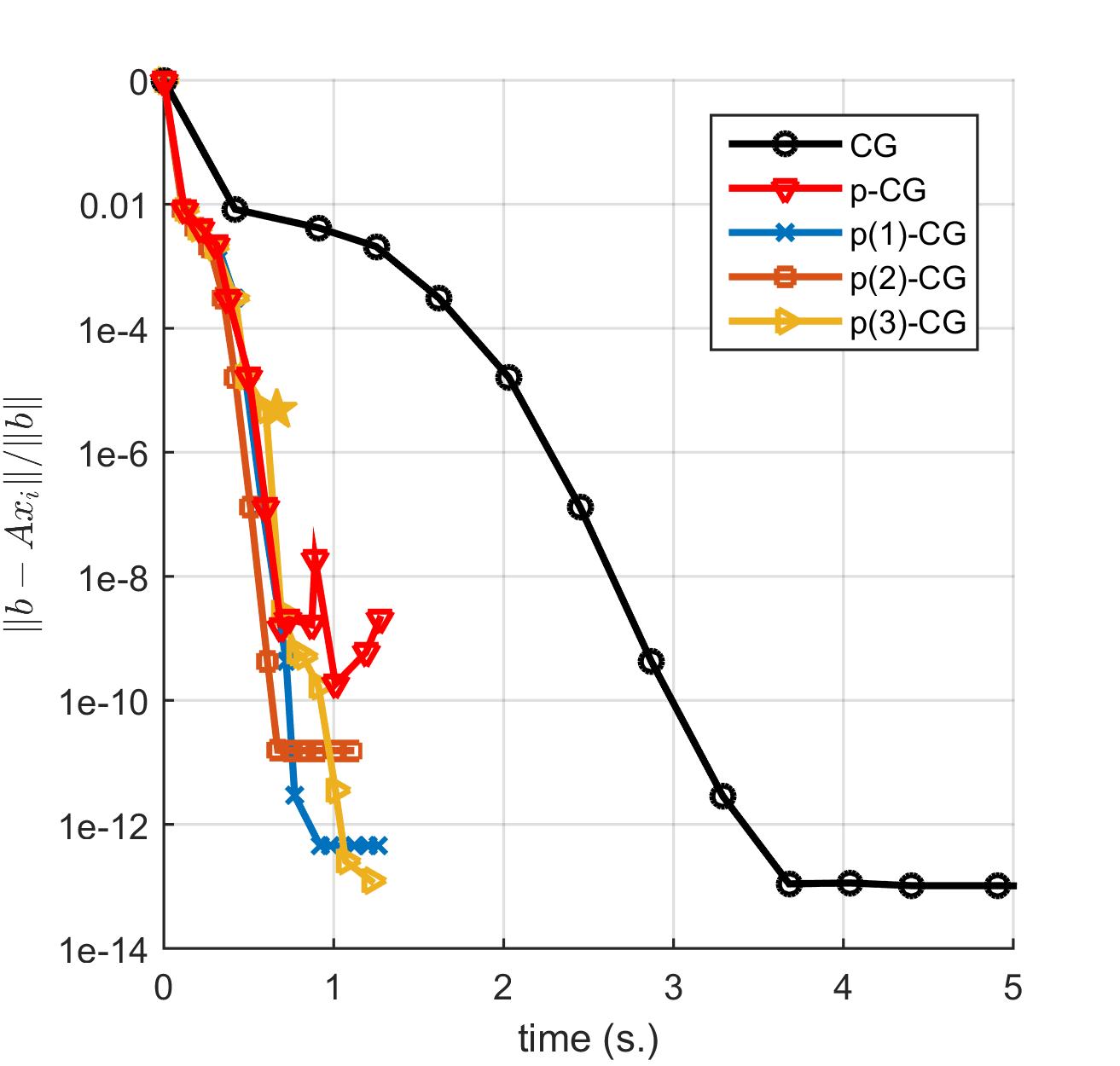}
\includegraphics[width=0.47\textwidth]{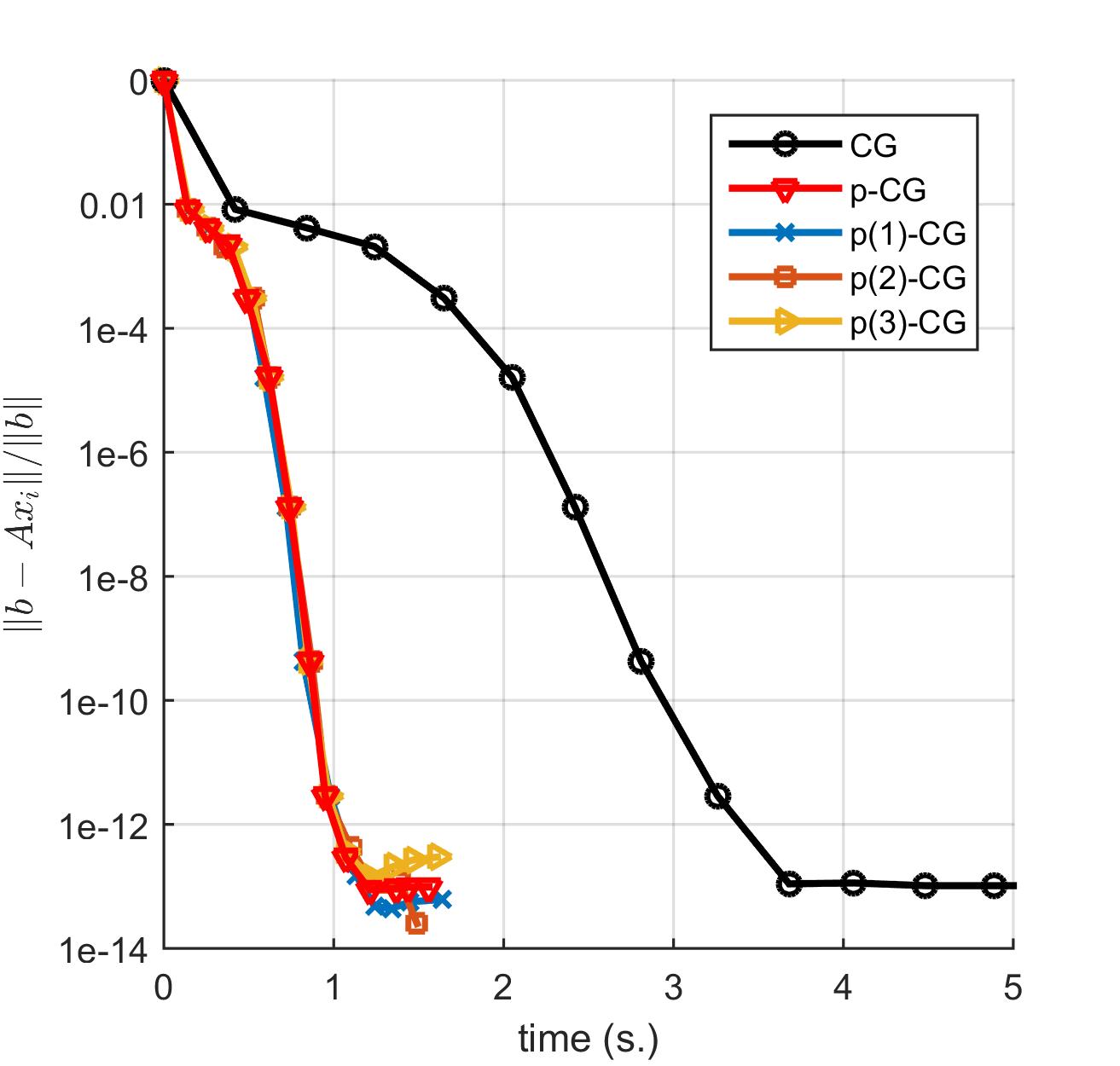}
\end{center}
\caption{Accuracy experiment on 10 nodes (120 processes) for a 5-point stencil 2D Poisson problem with $750 \times 750$ unknowns. Residual norm $\|b - A \bar{x}_j\|$ as a function of total time spent by the algorithm for various pipelined CG variants. Left: attainable accuracy for p-CG \cite{ghysels2014hiding} and p($\ell$)-CG with the standard recurrence relation \eqref{eq:vbar_rec} for $\bar{v}_{j+1}$.  Square root breakdown in p(3)-CG is indicated by a $\bigstar$ symbol (followed by explicit iteration restart). Right: attainable accuracy for the stabilized p-CG-rr algorithm (pipelined CG with automated residual replacement) \cite{cools2018analyzing} and p($\ell$)-CG with the stabilized recurrence relation \eqref{eq:vbar_stab} for $\bar{v}_{j+1}$.}
\label{fig:figure7}
\end{figure}

In Figure \ref{fig:figure7} the maximal attainable accuracy of several variants to the CG method for the $750 \times 750$ 2D Laplace problem is displayed as a function of total time spent by the algorithm. This experiment is executed on 10 of the nodes specified above. The left panel shows accuracy results for p($\ell$)-CG with the standard recurrence relation \eqref{eq:vbar_rec} for $\bar{v}_{j+1}$. The p(2)-CG algorithm outperforms the other CG variants shown in the figure in terms of time to solution, cf.~Fig.~\ref{fig:figure6} (left). In the right panel the stabilized recurrence relation \eqref{eq:vbar_stab} is used. The latter ensures that p($\ell$)-CG reaches a backward error $\|b-A\bar{x}_j\|/\|b\| \leq 1.0\text{e-}12$ for $\ell = 1,2,3$, which is not attainable by p-CG, p($1$)-CG and p($2$)-CG if the recurrence relation \eqref{eq:vbar_rec} is used.\footnote{Figure \ref{fig:figure7} (left panel): Note that p($3$)-CG is able to attain a residual that satisfies $\|b-A\bar{x}_j\|/\|b\| \leq 1.0\text{e-}12$, where p($1$)-CG and p($2$)-CG do not. This is due to the square root breakdown and subsequent restart of the p($3$)-CG algorithm as described in Remark \ref{remark:sqrt_breakdown}, see also \cite{cornelis2017communication}. The restart improves final attainable accuracy but delays the convergence of the algorithm considerably compared to other pipelined variants.} However, the standard p($\ell$)-CG algorithms generally converge faster than the stabilized variants, see also Fig.~\ref{fig:figure6}.

\section{Conclusions} \label{sec:conclusions}

As HPC hardware keeps evolving towards exascale the gap between computational performance and communication latency keeps increasing. Consequently, many numerical methods that are historically optimized towards flop performance, such as Krylov subspace methods for solving large linear systems, now need to be revised towards also (or even: primarily) minimizing communication overhead. Several research teams are currently working towards this goal \cite{carson2013avoiding,mcinnes2014hierarchical,ghysels2013hiding,grigori2016enlarged,imberti2017varying,cornelis2017communication}, resulting in a variety of communication reducing variants to classic Krylov subspace methods that feature improved scalability on massively parallel hardware. However, the obtained reduction in communication does not come for free; it requires a reordering of algorithmic operations which often affects the numerical behavior of the algorithm. Hence, these communication reducing algorithms typically suffer from the propagation of round-off errors, resulting in poor attainable accuracy and delayed convergence. The numerical analysis of these recently introduced and promising HPC variants of Krylov subspace algorithms is therefore paramount.

This work focuses on analyzing the effect of local rounding errors on maximal attainable accuracy for a specific class of communication hiding Krylov subspace methods, namely pipelined Conjugate Gradient methods \cite{ghysels2014hiding,cornelis2017communication}. Pipelined CG methods aim to minimize communication overhead both by reducing the number of global synchronization points and by hiding communication latency behind useful (local) computations. However, reorganization of the algorithm introduces multi-term recurrence relations to update the solution, which are prone to local rounding error propagation \cite{carson2014residual,carson2016numerical,cools2018analyzing}. This paper characterizes the behavior of local rounding errors stemming from the multi-term recurrence relations in pipelined CG (p-CG) from \cite{ghysels2014hiding} and $\ell$-depth pipelined CG (p($\ell$)-CG) from \cite{cornelis2017communication} and analyzes the impact of rounding error amplification in pipelined methods on their attainable accuracy. Furthermore, practical bounds for the propagation of the local rounding errors are derived, which lead to insights concerning the influence of the pipelined length and the Krylov basis on maximal attainable accuracy. Following the analysis a possible countermeasure to the local rounding error amplification is suggested. This strategy trades computational efficiency for improved accuracy and might prove useful for applications requiring a high precision solution. The analysis in this work is illustrated by a number of easy-to-reproduce numerical tests and parallel performance experiments using a C implementation of the pipelined Krylov subspace algorithms in PETSc. 

It should be noted that the analysis in this manuscript is not a complete numerical analysis of ($\ell$-depth) pipelined CG, since e.g.~the impact of loss of orthogonality due to rounding error propagation is not considered in this study. In addition, we do not be expect this analysis to be directly applicable to other pipelined Krylov subspace methods such as p($\ell$)-GMRES, although the general approach would likely show resemblance. The rounding error analysis for these novel, non-trivial classes of communication reducing methods should be meticulously performed for each individual algorithm in order to map the impact of round-off errors %on convergence 
on each method. We hope that the contributions in this paper may act as a starting point and a possible groundwork for future research in this direction.

\section*{Acknowledgments}
The author gratefully acknowledges funding by the Flemish Research Foundation (FWO Flanders) under grant 12H4617N. 
Additionally, the author is grateful to Wim Vanroose and Jeffrey Cornelis (UAntwerp, BE) and Pieter Ghysels (LBNL, US) for interesting discussions on the topic. 
The author would also like to cordially thank the anonymous referees of the current and former versions of this manuscript for their useful comments and valuable suggestions.

\bibliographystyle{plain}
\bibliography{refs2}

\end{document}